\numberwithin{equation}{section}  
\newcommand{\C}{\mathbb{C}}
\newcommand{\R}{\mathbb{R}}
\newcommand{\Q}{\mathbb{Q}}
\newcommand{\Z}{\mathbb{Z}}
\newcommand{\ii}{\operatorname{i}}
\newcommand{\ch}{\operatorname{ch}}
\newcommand{\Rea}{\operatorname{Re}}
\newcommand{\Imm}{\operatorname{Im}}
\newcommand{\PP}{\mathbb{P}}
\newcommand{\LL}{\mathbb{L}}
\newcommand{\cA}{\mathcal{A}}
\newcommand{\cB}{\mathcal{B}}
\newcommand{\cY}{\mathcal{Y}}
\newcommand{\cL}{\mathcal{L}}
\newcommand{\cM}{\mathcal{M}}
\newcommand{\cS}{\mathcal{S}}
\newcommand{\cE}{\mathcal{E}}
\newcommand{\cF}{\mathcal{F}}
\newcommand{\cT}{\mathcal{T}}
\newcommand{\cP}{\mathcal{P}}
\newcommand{\cU}{\mathcal{U}}
\newcommand{\olo}{\mathcal{O}}
\newcommand{\shHom}{\mathcal{H}om}
\newcommand{\End}{\operatorname{End}}
\newcommand{\shEnd}{\mathcal{E}nd}
\newcommand{\Bl}{\operatorname{Bl}}
\newcommand{\FS}{D\!\operatorname{FS}}
\newcommand{\Fuk}{D\!\operatorname{Fuk}}
\newcommand{\codim}{\operatorname{codim}}
\newcommand{\Coh}{\operatorname{Coh}}
\newcommand{\relsigma}{\underline{\sigma}}
\newtheorem{thm}{Theorem}[section]
\newtheorem{prop}[thm]{Proposition}
\newtheorem{lemma}[thm]{Lemma}
\newtheorem{cor}[thm]{Corollary}
\newtheorem{conj}[thm]{Conjecture}
\theoremstyle{definition}
\theoremstyle{remark}
\newtheorem{exm}[thm]{Example}
\newtheorem{rmk}[thm]{Remark}
\title[Special Lagrangian sections and stability conditions]{Special Lagrangian sections and stability conditions on threefolds}
\author{Jacopo Stoppa}
\date{\today}
\begin{document}

\maketitle

\begin{abstract} We study a class of Lagrangian submanifolds, given by sections of a special Lagrangian fibration, contained in certain almost Calabi-Yau threefolds (mirrors of polarised toric threefolds satisfying suitable assumptions). 

We show that, for a Lagrangian section $\cL$ in this class, the shift $\cL[2]$ defines an object in the heart of a natural Bridgeland stability condition on the relevant Fukaya-Seidel category, and that if $\cL[2]$ is \emph{semistable} with respect to this stability condition, then it is isomorphic to a \emph{special} Lagrangian. For mirrors of weak Fanos, the central charge of the stability condition is very close to periods of the holomorphic volume form. These results are consistent with Joyce's interpretation of the Thomas-Yau conjecture. 

As part of the proof we describe a set of line bundles and polarisations on suitable toric threefolds for which semistability with respect to the Bridgeland stability conditions constructed by Bernardara-Macr\`i-Schmidt-Zhao implies the existence of a deformed Hermitian Yang-Mills connection. 
\end{abstract}
\section{Introduction}
The well-known conjectures of Thomas and Yau \cite{Thomas_MomentMirror, ThomasYau} relate the existence of special Lagrangian submanifolds of Calabi-Yau manifolds to notions of stability inspired by algebraic geometry (i.e. Mumford-Takemoto slope stability for vector bundles). 

More recently Joyce \cite{Joyce_ThomasYau} proposed a version of these conjectures in terms of Bridgeland stability. Another key reference on the topic of stability and special Lagrangians is the work of Li \cite{YangLi_ThomasYau}. The latter emphasises the importance of allowing the more general class of \emph{almost} Calabi-Yau manifolds, i.e. complex manifolds  endowed with a K\"ahler form, not necessarily Ricci flat, and a holomorphic volume form.

We recall a partial statement of Joyce's conjecture, since it provides our main motivation, without discussing the details, which will not be relevant to the simplified situation we consider in our work.  
\begin{conj}[Joyce \cite{Joyce_ThomasYau}, Conjecture 3.2]\label{JoyceConj} Let $(M^n, \omega, \Omega)$ be a possibly non-compact Calabi-Yau manifold, with K\"ahler form $\omega$ and holomorphic volume form $\Omega$, suitably convex at infinity. Then there exists a natural stability condition $(Z, \cP)$ on the Fukaya category $\Fuk(M)$, whose objects are Lagrangian branes $\LL$, such that:
\begin{enumerate}
\item[$(i)$] the central charge $Z$ is the composition of natural maps
\begin{equation*}
K_0(\Fuk(M)) \xrightarrow{\LL \mapsto [\cL]} H_{n}(M, \Z) \xrightarrow{[\cL] \mapsto \int_{[\cL]}\Omega} \C,
\end{equation*}
where $\cL \subset M$ is the Lagrangian submanifold underlying the brane $\LL$;
\item[$(ii)$] each isomorphism class of semistable objects of $\Fuk(M)$ of phase $\pi\phi$ (i.e. each isomorphism class in $\cP(\phi)$) contains a unique representative Lagrangian brane $\LL$ such that the underlying Lagrangian submanifold is a (possibly immersed, singular) \emph{special} Lagrangian of phase $e^{\ii \pi \phi}$, that is, a (weak) solution of the equation  
\begin{equation*}
\iota^*_{\cL} \Imm(e^{-\ii \pi \phi} \Omega) = 0. 
\end{equation*} 
\end{enumerate}
\end{conj}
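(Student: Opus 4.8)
The plan is to establish the two conclusions in the concrete setting announced in the abstract—where $M$ is the almost Calabi-Yau mirror of a polarised toric threefold, and the objects of interest are the shifts $\cL[2]$ of Lagrangian sections $\cL$ of a special Lagrangian fibration—by transporting the whole problem across SYZ mirror symmetry to the B-model and importing the existence theory available there. First I would make the categorical dictionary precise: the Fukaya-Seidel category $\FS(M)$ is to be identified with $D^b\Coh(X)$ for the toric threefold $X$, and under this equivalence the section $\cL$ corresponds to a line bundle (more generally an object of $D^b\Coh(X)$). I would then exhibit the ``natural'' stability condition of the conjecture as the pullback of a Bernardara-Macr\`i-Schmidt-Zhao stability condition on $X$; the assertion that $\cL[2]$ lies in the heart becomes the statement that the mirror line bundle, suitably shifted, lies in the corresponding tilted heart, and the shift by $[2]$ is fixed by matching the grading and phase conventions on the two sides. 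This part is essentially bookkeeping once the equivalence and the gradings are pinned down.

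For part $(i)$ I would compute the A-side central charge as the period $\int_{[\cL]}\Omega$ and compare it, through the mirror map, with the BMSZ central charge on $X$, which is a polynomial expression in the (polarisation-twisted) Chern characters. The claim that these agree closely for mirrors of weak Fanos should follow by expanding the period as an oscillatory integral over the SYZ torus fibres and matching its leading asymptotics to the algebraic central charge, the discrepancy being a controlled higher-order correction determined by the toric data. I expect this comparison to be computational rather than conceptually difficult.

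The decisive step, and the main obstacle, is part $(ii)$: to show that Bridgeland semistability of the mirror object forces the existence of a genuine \emph{special} Lagrangian in the isomorphism class of $\cL[2]$, i.e.\ a (weak) solution of $\iota^*_{\cL}\Imm(e^{-\ii\pi\phi}\Omega)=0$. The strategy is to route this through the deformed Hermitian Yang-Mills equation: under SYZ mirror symmetry the special Lagrangian condition on $\cL$ corresponds to the existence of a $\dhym$ connection on the mirror line bundle. It therefore suffices to prove, for the relevant toric threefolds, line bundles and polarisations, that BMSZ-semistability implies solvability of the $\dhym$ equation—precisely the implication flagged in the abstract. Having produced such a connection, I would run the correspondence in reverse to recover a special Lagrangian, and verify uniqueness within the isomorphism class using the correspondence between B-side isomorphisms and Hamiltonian isotopies on the A-side.

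I expect the $\dhym$ existence result to be the hard analytic heart of the argument. Unlike the Hermite-Einstein case, there is no general ``stability $\Rightarrow$ existence'' theorem for $\dhym$, so the plan is to exploit the toric structure to reduce the $\dhym$ PDE to a real Monge-Amp\`ere-type equation on the moment polytope, and then to use the numerical inequalities imposed by semistability to exclude destabilising subobjects and thereby to guarantee a solution. The delicate points I anticipate are controlling the phase so that it stays within the small-angle regime where $\dhym$ is well behaved, and confirming that the BMSZ walls are positioned so that semistability is exactly the condition needed to solve the equation; reconciling the algebraic stability inequalities with the analytic openness/closedness estimates for $\dhym$ is where the proof will require the most care.
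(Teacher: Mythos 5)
There is a fundamental mismatch here: the statement you were asked about is Joyce's Conjecture 3.2, which this paper neither proves nor claims to prove --- it is quoted as motivation and described as being of ``outstanding difficulty.'' What you have sketched is not a proof of the conjecture but an outline of the paper's \emph{main theorem} (Theorem \ref{MainThmIntro}/\ref{MainThm}), which is a radically simplified model case. The gap between the two is not cosmetic. The conjecture demands a stability condition on $\Fuk(M)$ of a genuine (possibly non-compact) Calabi-Yau with central charge \emph{exactly} $\int_{[\cL]}\Omega$, and asserts that \emph{every} semistable isomorphism class in \emph{every} phase contains a \emph{unique} special Lagrangian representative. Your construction, following the paper, only produces: a stability condition on the Fukaya-Seidel category $\FS(\cY_{q_k})$ (the toric mirror is not Calabi-Yau, so $\Fuk(M)$ must be replaced); a central charge of Douglas type $\int_X e^{-\ii k\omega}\ch(L^k)$, which agrees with a period only up to $O(k^{-1})$ corrections governed by Iritani's Gamma theorem, and only for weak Fano $X$; and the semistable-implies-special implication only for the specific objects $\cL(L^k,h)[2]$ with $L$ in the supercritical numerical range, after the common rescaling $\omega\mapsto k\omega$, $L\mapsto L^k$ with $k\gg 1$, and under a genericity assumption on $[\omega]$. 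Nothing in this machinery addresses uniqueness, other semistable objects, or the removal of the genericity and large-$k$ hypotheses --- all of which the conjecture requires. Your proposal also silently drops the scaling: the paper stresses (Remark \ref{MainThmRmk}(4), citing Collins-Lo-Yau and Collins-Shi) that \emph{without} the large volume, large Lagrangian limit the implication from Bridgeland stability to the special Lagrangian condition is simply false for $\sigma^{\omega}_{\alpha,\beta}$-type stability conditions.

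Even measured against the paper's actual theorem, your analytic step is misdescribed. You say there is no general ``stability $\Rightarrow$ existence'' theorem for dHYM and propose to reduce the dHYM equation to a real Monge-Amp\`ere equation on the moment polytope and solve it directly; the paper does nothing of the kind. It invokes the Nakai-Moishezon (dHYM positivity) criterion of Chen, Datar-Pingali and Chu-Li-Takahashi (Theorem \ref{dHYMThm}): in the \emph{supercritical} phase range $\hat{\theta}\in\left(\tfrac{n-2}{2}\pi,\tfrac{n}{2}\pi\right)$ --- not a ``small-angle regime,'' which is where the hypothesis $\omega\cdot(c_1(L))^2-\tfrac{\omega^3}{3}>0$ enters --- solvability is \emph{equivalent} to finitely many intersection-theoretic inequalities over irreducible subvarieties. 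The genuinely new content of the paper, entirely absent from your sketch, is the bridge from Bridgeland semistability to those inequalities: the construction of explicit objects $\cS_V\in D^b(X)$ (cones on twists $L^k\to L^k(k_V V)$ and iterated versions for curves, Proposition \ref{SVProp}), the verification via the tilting conditions that $\cS_V[\dim V-1]$ and $L^k[2]$ lie in the BMSZ heart $\cA^{k\omega}_{\alpha,k\beta}$ with actual \emph{injections} $\cS_V[\dim V-1]\subset L^k[2]$ (Propositions \ref{DivisorsProp}, \ref{codim2Prop}) --- this is where $\mu_\omega(L)<0$, the sign of $\cos(\hat{\theta}_\beta)$, the choice $\alpha\le\tfrac{1}{\sqrt{3}}$, and the hierarchy $k\gg k_1\gg k_2$ are all used --- and the asymptotic matching of the resulting phase inequalities, as $k\to\infty$, with dHYM semipositivity, upgraded to strict positivity by genericity. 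Without these inclusions in the heart, semistability yields no inequalities at all, and your ``exclude destabilising subobjects'' step has no content.
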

\begin{rmk} As explained by Joyce, formulating $(ii)$ requires a conjectural extension of $\Fuk(M)$ which includes suitable immersed, singular Lagrangians. See \cite{YangLi_ThomasYau}, Section 5 for much progress in this direction. 
\end{rmk}
\begin{rmk} The converse of $(ii)$ is also part of \cite{Joyce_ThomasYau}, Conjecture 3.2, although we do not discuss this here (see \cite{J_toricThomasYau}, Theorem 1.9 for the case of toric surfaces).
\end{rmk}
Given the outstanding difficulty of Conjecture \ref{JoyceConj}, it is natural to look for some radically simplified model case, which nevertheless displays some of the features of the expectations $(i)$, $(ii)$.

One such simplified model was proposed by Collins and Yau in \cite{CollinsYau_momentmaps_preprint}, Section 9, using mirror symmetry for toric manifolds and the Leung-Yau-Zaslow transform for holomorphic line bundles (see also \cite{YangLi_ThomasYau}, Section 2.6 for a discussion). Let us briefly recall some aspects of this.

Suppose $X^n$ is a projective toric manifold. Following the results of Givental \cite{Givental_toric} and their generalisation by several authors, culminating in the work of Coates-Corti-Iritani-Tseng \cite{CoatesCortiIritani_hodge}, a mirror to $X$ is constructed as a family of algebraic tori $\cY \to \cM$, with $\cY_q \cong (\C^*)^n$ for $q \in \cM$, endowed with a regular function $W\!: \cY \to \C$, the Landau-Ginzburg potential. A sufficiently large K\"ahler class $[\omega]$ on $X$ defines a corresponding point $q \in \cM$. The mirror family $\cY \to \cM$ admits important partial compactifications, but we will not use these in this work. Before partial compactification, $\cY \to \cM$ is in fact a trivial fibration, although it is not canonically trivialised.  

On the other hand, following Leung-Yau-Zaslow \cite{LeungYauZaslow} (see also \cite{Chan_survey}, Section 3 and \cite{CollinsYau_momentmaps_preprint}, Section 9 for more details on the toric case), fixing a torus-invariant K\"ahler form $\omega$ on $X$, we obtain a corresponding special Lagrangian torus fibration  $X^o \to \Delta^o$ from the complement $X^o$ of the toric boundary to the open momentum polytope $\Delta^o$. Dualising this torus fibration yields a bounded domain $\cU_q \subset \cY_q$, where $q \in \cM$ corresponds to $[\omega]$. The bounded domain $\cU_q$ is almost Calabi-Yau in a natural way, i.e. it comes with natural mirror K\"ahler form $\omega^{\vee}$ and holomorphic volume form $\Omega$, depending on $\omega$. Explicit formulae for $\omega^{\vee}$, $\Omega$ are given in \cite{CollinsYau_momentmaps_preprint}, Section 9. 

Given $(L, h)$ a holomorphic line bundle on $X$, endowed with a torus-invariant Hermitian metric $h$, Leung-Yau-Zaslow construct a mirror Lagrangian submanifold $\cL = \cL(L, h) \subset \cU_{q}$ with respect to $\omega^{\vee}_q$, which is a section of $\cU_q \to \Delta^o$. Sections obtained from different choices of $h$ are isotopic through a special type of Hamiltonian diffeomorphisms (such that the corresponding holomorphic structure on the line bundle over $X^{o}$ extends to $X$). The Lagrangian $\cL(L, h)$ is called the \emph{real Fourier-Mukai transform}, or \emph{Leung-Yau-Zaslow transform} or, more commonly, the \emph{Strominger-Yau-Zaslow (SYZ) transform} of  the Hermitian holomorphic line bundle $(L, h)$. An explicit formula for $\cL(L, h)$ is given in \cite{CollinsYau_momentmaps_preprint}, Section 9.

Collins and Yau proposed to study versions of the Thomas-Yau conjecture, especially in Joyce's formulation using Bridgeland stability (Conjecture \ref{JoyceConj}), for these Lagrangian sections $\cL = \cL(L, h) \subset (\cU_{q}, \omega^{\vee}, \Omega)$. 

Concretely, one would like to find conditions, in terms of Bridgeland stability, implying that we can find $h$ such that $\cL(L, h) \subset \cU_{q}$ is special Lagrangian. It is well-known that the latter problem is obstructed, as we will recall, see Theorems \ref{LYZThm} and \ref{dHYMThm} (although it is always solvable in the \emph{large volume limit} when we rescale $\omega\mapsto k\omega$ for $k \gg 1$). 

In \cite{J_toricThomasYau} we proved several general results concerning this toric case of the Thomas-Yau conjecture. 

We can now discuss our first main result in the present work. It concerns the case when $X$ is a projective toric threefold, so $(\cU_{q}, \omega^{\vee}, \Omega)$ is three-dimensional almost Calabi-Yau. Since $X$ is not Calabi-Yau, the Fukaya category $\Fuk(M)$ appearing in Conjecture \ref{JoyceConj} must be replaced by some version of the Fukaya-Seidel category $\FS(\cY_q)$ (see Remark \ref{MainThmRmk}, (2)). This involves extra data, namely, the Landau-Ginzburg potential $W_q \!: \cY_q \to \C$ or a conical Lagrangian $\Lambda_q$ in the universal cover of $\cY_q$ (the Lagrangian skeleton). 

Our result relies on rescaling the K\"ahler form $\omega\mapsto k\omega$ by a sufficiently large factor $k > 0$ while at the same time rescaling the Lagrangian $\cL$ by the same factor, i.e. tensoring the mirror line bundle $L \mapsto L^k := L^{\otimes k}$. This is a \emph{large volume, large Lagrangian limit}, which also appears, from a different perspective, in the proposal of Collins and Yau \cite{CollinsYau_momentmaps_preprint}, Section 9. We note that it is very different from the pure large volume limit (keeping $L$ fixed): in particular, the  obstructions mentioned above are \emph{invariant} under this common rescaling (see Remark \ref{ScaleInvarianceRmk}).

We denote by $q_k \in \cM$ the point in the complex moduli space of the mirror corresponding to $k \omega$.

\begin{thm}[Theorem \ref{MainThm}]\label{MainThmIntro} Let $(X, \omega)$ be a smooth projective toric threefold with a Hodge torus-invariant K\"ahler form $\omega$ such that, for all effective divisors $D$, we have
\begin{enumerate}
\item[$(a)$] $\omega \cdot D^2 \geq 0$,
\item[$(b)$] if $\omega \cdot D^2 = 0$, then $D$ lies on an extremal ray of the effective cone.
\end{enumerate}
Fix a line bundle  $L$ on $X$ with Mumford-Takemoto slope $\mu_{\omega}(L) < 0$, satisfying the condition
\begin{equation*}
\omega \cdot (c_1(L))^2 - \frac{\omega^3}{3} > 0. 
\end{equation*} 
Define a graded Lagrangian section $\tilde{\cL}$ in the almost Calabi-Yau manifold $\,\cU_{q_k}$ mirror to $(X, k\omega)$ as a shift of the SYZ transform $\cL(L^k, h) \subset \cU_{q_k} \subset \cY_{q_k}$ with respect to $k\omega$, 
\begin{equation*}
\tilde{\cL} = \cL(L^k, h)[2], 
\end{equation*}
for $k > 0$ sufficiently large (depending only on $(X, \omega)$ and $L$). Then 
\begin{enumerate}
\item[$(i)$] there is a Bridgeland stability condition $\sigma^{\vee} = (Z^{\vee}, \cA^{\vee})$ on the Fukaya-Seidel category $\FS(\cY_{q_k})$ such that $\tilde{\cL}$ lies in the heart $\cA^{\vee}$.
\item[$(ii)$] If $\tilde{\cL}$ is \emph{semistable} with respect to the Bridgeland stability condition $\sigma^{\vee}$ on $\FS(\cY_{q_k})$, and $[\omega]$ is \emph{generic} in the locus of $H^{1,1}(X, \R)$ for which $(a)$, $(b)$ hold, then the isomorphism class of $\tilde{\cL}$ in $\FS(\cY_{q_k})$ contains a \emph{special} Lagrangian, given by a shift of a SYZ section.
\item[$(iii)$] If $X$ is weak Fano (i.e. $-K_X$ is big and nef), the central charge of $\tilde{\cL}$ is given by periods of the holomorphic volume form, up to a small correction term 
\begin{equation*}
Z^{\vee}(\tilde{\cL}) = \frac{1}{(2\pi \ii)^{n}} \int_{[\tilde{\cL}]} e^{-W_{q_k}/z} \Omega_0\,(1 + O(k^{-1})),
\end{equation*}
where $W_{q_k} \!: \cY_{q_k} \to \C$ is the Landau-Ginzburg potential corresponding to $k\omega$, and $[\tilde{\cL}] \in H_n(\cY_{q_k}, \{\Rea(W_{q_k}) \gg 0\}; \Z)$ is a natural integration cycle, coinciding with $\tilde{\cL}$ at least if $L^{\vee}$ is ample.    
\item[$(iv)$] When $X$ is not necessarily weak Fano, there exist a Landau-Ginzburg potential $W_{q_k}$, a \emph{complex} cycle $\Gamma_{\cL}$, with 
\begin{equation*}
[\Gamma_{\cL}] \in H_n(\cY_{q_k}, \{\Rea(W_{q_k}) \gg 0\}; \Z) \otimes \C, 
\end{equation*}
and a holomorphic volume form $\Omega^{(k)}$, such that  
\begin{equation*} 
Z^{\vee}(\tilde{\cL}) = \frac{1}{(2\pi \ii)^{n}}\int_{\Gamma_{\cL}} e^{-W_{q_k}/z}\Omega^{(k)} (1 + O(k^{-1})),  
\end{equation*}
where the integral is understood in the sense of its asymptotic expansion as $z \to 0^+$.
\end{enumerate}
\end{thm}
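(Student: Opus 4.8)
The plan is to transport every assertion from the symplectic $A$-side to the algebraic $B$-side via homological mirror symmetry, replacing each statement about the graded Lagrangian $\tilde{\cL} = \cL(L^k,h)[2]$ by the corresponding statement about the line bundle $L^k$, suitably shifted, as an object of $D^b(\Coh X)$. The foundational step is to fix an equivalence $\FS(\cY_{q_k}) \simeq D^b(\Coh X)$ under which the SYZ transform $\cL(L^k,h)$ corresponds to $L^k$; this is exactly the Leung--Yau--Zaslow correspondence recalled in Theorem~\ref{LYZThm}, and the grading conventions are arranged so that $\tilde{\cL}$ matches $L^k[2]$. Before anything else I would record the elementary but decisive scale-invariance: writing $\mu_{\omega}$ for the Mumford--Takemoto slope, both $\mu_{k\omega}(L^k) = k^3\mu_{\omega}(L)$ and $k\omega \cdot c_1(L^k)^2 - (k\omega)^3/3 = k^3\bigl(\omega \cdot c_1(L)^2 - \omega^3/3\bigr)$ are homogeneous of degree three, so the two hypotheses $\mu_{\omega}(L) < 0$ and $\omega \cdot c_1(L)^2 - \omega^3/3 > 0$ are preserved along the common rescaling and may be checked once, at $k=1$.

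For part $(i)$ I would pull back the Bridgeland stability condition of Bernardara--Macr\`i--Schmidt--Zhao on $D^b(\Coh X)$ to obtain $\sigma^{\vee} = (Z^{\vee},\cA^{\vee})$ on $\FS(\cY_{q_k})$. The heart $\cA^{\vee}$ is a double tilt of $\Coh X$ governed by two slope thresholds, and membership $\tilde{\cL} \in \cA^{\vee}$ reduces to checking that the two successive tilts place $L^k$ so that the shift $L^k[2]$ lands in $\cA^{\vee}$; the two scale-invariant inequalities above are exactly the two tilt-slope conditions that achieve this for $k$ large, while conditions $(a)$ and $(b)$ on effective divisors are what guarantee that the BMSZ construction yields a genuine stability condition on the threefold (the required Bogomolov--Gieseker-type positivity and the support property). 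For part $(ii)$, semistability of $\tilde{\cL}$ is equivalent to BMSZ-semistability of $L^k$ in $\cA^{\vee}$, whence I would invoke the paper's central technical result (the one announced in the abstract): that BMSZ-semistability of such line bundles forces the existence of a deformed Hermitian Yang--Mills connection on $L^k$, using the genericity of $[\omega]$ in the locus cut out by $(a)$, $(b)$ to exclude strictly semistable degenerations that would otherwise obstruct the argument. Theorem~\ref{dHYMThm} then converts this dHYM connection, back across the SYZ transform, into a special Lagrangian SYZ section, which after the shift by $[2]$ represents the same isomorphism class as $\tilde{\cL}$.

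Parts $(iii)$ and $(iv)$ amount to matching the algebraic central charge with an oscillatory period of the holomorphic volume form. I would write $Z^{\vee}(L^k)$ explicitly as the BMSZ (polynomial) central charge, a combination of the integrals $\int_X e^{-k\omega}\,\ch(L^k)$ twisted by the Gamma class $\Gcl_X$, and compare it to $\frac{1}{(2\pi\ii)^n}\int_{[\tilde{\cL}]} e^{-W_{q_k}/z}\Omega_0$ via the toric mirror theorem of Givental and Coates--Corti--Iritani--Tseng. The Gamma class is precisely the correction predicted by Iritani's integral structure so that $B$-brane central charges match periods of $e^{-W/z}\Omega_0$; in the weak Fano case the $I$-function and mirror map have sufficiently controlled leading asymptotics that the stationary-phase expansion of the period reproduces $Z^{\vee}(L^k)$ up to the stated factor $(1 + O(k^{-1}))$, with $[\tilde{\cL}]$ identified with the Lefschetz thimble of $L^k$ (literally the SYZ section when $L^{\vee}$ is ample). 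For $(iv)$, when $X$ is not weak Fano the naive thimble and volume form no longer suffice, and I would absorb the higher-order terms of the mirror map into a complex cycle $\Gamma_{\cL}$ and a corrected holomorphic volume form $\Omega^{(k)}$, interpreting the integral through its $z \to 0^+$ asymptotic expansion.

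I expect the main obstacle to be $(iii)$--$(iv)$: pinning down the exact dictionary between the BMSZ polynomial central charge and the oscillatory period, including the correct choice of integration cycle and uniform control of the $O(k^{-1})$ error. This rests on a delicate analysis of the toric mirror map and its large-volume asymptotics, and is substantially harder than the essentially formal transport of stability conditions underlying $(i)$ and $(ii)$.
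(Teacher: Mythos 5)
Your proposal reproduces the paper's architecture (transfer via toric HMS, BMSZ stability on $D^b(X)$, the Leung--Yau--Zaslow correspondence, the dHYM Nakai--Moishezon criterion, Iritani's Gamma theorem for $(iii)$--$(iv)$), but at the decisive step it is circular. In part $(ii)$ you ``invoke the paper's central technical result (the one announced in the abstract): that BMSZ-semistability of such line bundles forces the existence of a deformed Hermitian Yang--Mills connection.'' That implication \emph{is} the theorem's new content (Corollary \ref{BStabdHYMCor}), and a blind proof must supply it. What is missing is the entire mechanism of Propositions \ref{DivisorsProp} and \ref{codim2Prop}: one constructs explicit test objects $\cS_V$ indexed by irreducible toric subvarieties --- for a divisor $V$, the torsion sheaf fitting in $0 \to L^k \to L^k(k_V V) \to \cS_V \to 0$; for a curve $V = V_1 \cap V_2$, a cone of complexes of twists --- and proves that $\cS_V[\dim V - 1]$ is a \emph{subobject} of $L^k[2]$ in the heart $\cA^{k\omega}_{\alpha,k\beta}$, with parameters $k \gg k_1 \gg k_2$ chosen uniformly over the finitely many toric $V$. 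Only then does $\sigma$-semistability of $L^k[2]$ yield the phase inequalities $\arg Z(\cS_V[\dim V-1]) \leq \arg Z(L^k[2])$, which by Proposition \ref{SVProp} give dHYM-semipositivity, and genericity of $[\omega]$ is used to upgrade these non-strict inequalities to the strict positivity required by Theorem \ref{dHYMThm} --- not, as you suggest, to ``exclude strictly semistable degenerations.'' Without the subobject property, semistability of $\tilde{\cL}$ gives no phase inequalities at all, so your part $(ii)$ does not get off the ground.

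A related gap sits inside your part $(i)$: you assert that the two numerical hypotheses ``are exactly the two tilt-slope conditions'' placing $L^k[2]$ in the heart, but checking $\nu^{k\omega}_{\alpha,k\beta}(L^k[1]) \leq 0$ on the object itself is not enough --- membership in $\cF^{k\omega}_{\alpha,k\beta}$ requires $\nu_{\max} \leq 0$. The paper closes this using the Bayer--Macr\`i--Stellari lemma that, under hypothesis $(a)$ ($\omega \cdot D^2 \geq 0$ for all effective $D$), either $L$ or $L[1]$ is $\nu_{\alpha,\beta}$-stable, so that $\nu_{\max} = \nu$; this is where $(a)$ enters beyond guaranteeing the BMSZ Bogomolov--Gieseker inequality, and the same lemma is needed for the twists $L^k(k_V V)[1]$ in the subobject argument above. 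Two minor points: the slope is scale-invariant, $\mu_{k\omega}(L^k) = \mu_{\omega}(L)$ (numerator and denominator both scale by $k^3$), not ``homogeneous of degree three''; and for $(iii)$--$(iv)$ your Gamma-class/mirror-map sketch is the right mechanism, but the paper obtains these by citing its precursor \cite{J_toricThomasYau} (Theorems 2.5 and 2.20), where the uniform $O(k^{-1})$ control --- the part you correctly flag as delicate --- is actually carried out.
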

Theorem \ref{MainThmIntro} is proved in a stronger form (providing more information on the stability condition, potential destabilisers and central charges) in Section \ref{MainSec}, as Theorem \ref{MainThm}.

\begin{exm}[\cite{BernardaraMacri_threefolds}, Section 5] There exist Fano examples, e.g. note that the conditions $(a)$, $(b)$ hold for all polarisations on $X = \PP^1 \times \PP^2$, $X = \PP^1 \times \PP^1 \times \PP^1$ and for polarisations the blowup $X = \Bl_l(\PP^3)$ along a line $l$ of the form $H = a h + b f$ with $a, b > 0$ and $a\leq b$ , where $h$, $f$ are the pullbacks of $\olo_{\PP^3}(1)$ and $\olo_{\PP^1}(1)$ respectively (viewing $X$ as a projective bundle over $\PP^1$).  
\end{exm}

\begin{rmk}\label{MainThmRmk} This result calls for several observations.
\begin{enumerate}
\item The class of polarised toric threefolds $(X, \omega)$ satisfying $(a)$, $(b)$ is precisely the one considered by Bernardara-Macr\`i-Schmidt-Zhao in one of their constructions of stability conditions on threefolds contained in \cite{BernardaraMacri_threefolds} (we recall their result as Theorem \ref{ToricBStabThm}). Under these assumptions on $(X, \omega)$, there is a natural set of Bridgeland stability conditions $\sigma^{\omega}_{\alpha, \beta}$ on $D^b(X)$, parametrised by $\alpha > 0$, $\beta \in \R$. 

Note that the conditions $(a)$, $(b)$ are invariant under rescaling $\omega \mapsto k \omega$; this is crucial in our construction.
\item Toric homological mirror symmetry gives an equivalence  
\begin{equation*}
D^b(X) \cong \FS(\cY_{q_k}).
\end{equation*}
This is due to many authors including \cite{Abouzaid_toricHMS, Zaslow_toricHMS}; we follow \cite{KuwagakiCohCon, SibillaCohCon, Shende_toricMirror, ZhouCohCon} for the non-equivariant case of toric homological mirror symmetry in the sense of Fang-Liu-Treumann-Zaslow \cite{Zaslow_toricHMS}, and the work of Fang \cite{Fang_charges} for Landau-Ginzburg potentials and central charges in the toric weak Fano case appearing in our statement $(iii)$. In fact, mirror symmetry in the sense of Fang-Liu-Treumann-Zaslow is compatible with the Leung-Yau-Zaslow transform for line bundles, as explained e.g. by Fang \cite{Fang_charges}.

Through this equivalence, the stability conditions $\sigma^{\omega}_{\alpha, \beta}$ induce stability conditions on $\FS(\cY_{q_k})$, and we write $\sigma^{\vee}$ for the stability condition on $\FS(\cY_{q_k})$ induced by $\sigma^{k\omega}_{1/\sqrt{3}, 0} = (Z, \cA)$. Note that, in general, constructing stability conditions on Fukaya-type categories seems even harder than the algebro-geometric case of $D^b(X)$. The idea of transferring from $D^b(X)$ seems natural and is mentioned e.g. in \cite{Joyce_ThomasYau}, Remark 3.3 (iii), in the compact Calabi-Yau context.

By construction, the central charge satisfies
\begin{align*}
&Z^{\vee}(\tilde{\cL}) = Z^{\vee}(\cL(L^k, h)[2]) = Z^{\vee}(\cL(L^k, h))\\
& := Z(L^k) = \int_X e^{-\ii k\omega} \ch(L^k).  
\end{align*}
The latter expression is called a central charge of Douglas type. Our statement $(iv)$ follows from general properties of toric mirror symmetry proved in \cite{CoatesCortiIritani_hodge}. However, in this case, we do not know if there is a geometric relation between the Lagrangian submanifold $\cL$ and the cycle $\Gamma_{\cL}$. 
\item The numerical condition $\omega \cdot (c_1(L))^2 - \frac{\omega^3}{3} > 0$ is equivalent to the condition
\begin{equation*}
\arg \int_X (\omega + \ii c_1(L^{\vee}))^n = \hat{\theta} \in \left(\frac{\pi}{2}, \pi\right) \mod 2\pi.
\end{equation*}
As we will see, it implies that we can work with a (mirror) version of the special Lagrangian equation in the so-called \emph{supercritical phase}, for which the best analytic results are available. When $\tilde{\cL}$ is special Lagrangian, its phase angle is given by $\hat{\theta} + 2\pi$.

\item Scaling $\omega \mapsto k\omega$, $L \mapsto L^k$ for large $k > 0$ is essential to our approach for relating to Bridgeland semistability and periods. 

Firstly, Collins-Lo-Yau \cite{CollinsLoYau_K3} and Collins-Shi \cite{Collins_stability} provide examples of Bridgeland stable line bundles $L$ (with respect to $\sigma^{\omega}_{\alpha, \beta}$ or some analogue of these stability conditions on elliptic K3 surfaces) such that $\cL(L, h)$ cannot be equivalent to a special Lagrangian section. 

On the other hand, even when $X$ is a weak Fano threefold, the central charge $Z^{\vee}(\cL(L, h))$ can be very far from a period of $\cL(L, h)$: as we will recall, in this case, the difference from a period is explained by Iritani's Gamma theorem \cite{Iritani_gamma} (see Remark \ref{GammaRmk}).    
\item The works of Collins-Lo-Yau \cite{CollinsLoYau_K3} and Collins-Shi \cite{Collins_stability} mainly study the converse implication (i.e. under what conditions $\cL(L, h)$ being equivalent to special SYZ Lagrangian section implies that $L$ is semistable), in the case when $X$ is a surface, although the results are stated purely on $X$, in terms of (mirror) line bundles, and in particular \cite{CollinsLoYau_K3} works with elliptic K3 surfaces, for which the SYZ transform only holds approximately nearby a large complex structure limit.  
\item The genericity assumption on $\omega$ is explicit: it requires that $[\omega]$ does not lie in the union of the finitely many analytic subvarieties of $H^{1,1}(X, \R)$ cut out by 
\begin{equation*} 
\int_V \Rea(\ii \omega + c_1(L^{\vee}))^{\dim V} - \cot(\varphi)\Imm(\ii \omega + c_1(L^{\vee}))^{\dim V} =  0,
\end{equation*}
as $V$ ranges through irreducible toric subvarieties of $X$, where  
\begin{equation*}
\varphi = \frac{n}{2}\pi - \arg \int_X (\omega + \ii c_1(L^{\vee}))^n \mod 2\pi. 
\end{equation*}
We expect that this genericity assumption on $\omega$ can be removed at the cost of allowing suitable singular special Lagrangian sections. This is motivated by the general variational framework proposed by Li \cite{YangLi_ThomasYau} and by the results of Datar-Mete-Song \cite{DatarSong_slopes} concerning weak solutions of certain complex Monge-Amp\`ere equations on surfaces.  
\item The analogue of Theorem \ref{MainThmIntro} for general projective toric surfaces is proved in \cite{J_toricThomasYau}, Theorem 1.10. The latter result is stated in terms of Bridgeland stability rather than semistability. However one can check that the proof only uses semistability and genericity, just as for Theorem \ref{MainThmIntro}. Moreover, assuming a conjecture by Arcara and Miles, semistability and genericity on surfaces actually imply stability (see \cite{J_toricThomasYau}, Section 6). We do not know under what conditions the same holds for threefolds.  
\end{enumerate}
\end{rmk}

The assumptions $(a)$, $(b)$ in Theorem \ref{MainThmIntro} are quite restrictive. We construct more examples of Lagrangian sections satisfying a Thomas-Yau principle by considering suitable blowups.
\begin{thm}[Theorem \ref{BlpThm}]\label{BlpMainThmIntro} Fix $(X, \omega)$ and $L$ as in Theorem \ref{MainThmIntro}. Let $\pi\!:\tilde{X} = \Bl_p X \to X$ denote the blowup at a torus-fixed point, with exceptional divisor $E$, endowed with the K\"ahler class 
\begin{equation*}
\tilde{\omega}_{\delta} = \pi^*\omega - \delta \sin(\rho)[E]    
\end{equation*}
for $\delta, \rho > 0$ sufficiently small, with $\cos(\rho) \in \Q$. Define the graded Lagrangian section $\hat{\cL}$ in the almost Calabi-Yau manifold $\,\tilde{\cU}_{q_k}$ mirror to $(X, k \tilde{\omega}_{\delta})$,
\begin{equation*}
\hat{\cL} = \cL(\tilde{L}^k, h)[2], 
\end{equation*}
as a shift of the SYZ transform of the Hermitian line bundle
\begin{equation*}
(\tilde{L}^k, h) := (\pi^*L^k(k\cos(\rho)\delta E), h) \subset \tilde{\cU}_{q_k} \subset \tilde{\cY}_{q_k}
\end{equation*}
with respect to a torus-invariant representative of $k\tilde{\omega}_{\delta}$, for $k > 0$ sufficiently large and divisible (depending only on $(X, \omega)$ and $L$).

Then $\hat{\cL}$ lies in the heart of a Bridgeland stability condition $\sigma^{\vee}$ on the Fukaya-Seidel category of the mirror $\FS(\tilde{\cY}_{q_k})$. If $\hat{\cL}$ is \emph{semistable} with respect to $\sigma^{\vee}$ for generic $[\omega] \in H^{1,1}(X, \R)$, it is isomorphic in $\FS(\tilde{\cY}_{q_k})$ to a \emph{special} Lagrangian. Analogues of $(iii)$ and $(iv)$ in Theorem \ref{MainThmIntro} also hold.
\end{thm}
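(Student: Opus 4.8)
The plan is to transfer the entire argument of Theorem \ref{MainThm} to the toric blowup $\tilde{X} = \Bl_p X$, treating $\tilde{\omega}_{\delta}$ as a small perturbation of $\pi^*\omega$. The first and most delicate task is the construction of the stability condition $\sigma^{\vee}$ on $\FS(\tilde{\cY}_{q_k}) \cong D^b(\tilde{X})$. Here one cannot simply invoke the BMSZ toric construction (Theorem \ref{ToricBStabThm}) on $\tilde{X}$: the exceptional divisor $E \cong \PP^2$ violates hypothesis $(a)$, since $\pi^*\omega \cdot E^2 = 0$ and $E^3 = 1$ give
\begin{equation*}
\tilde{\omega}_{\delta} \cdot E^2 = -\delta\sin(\rho)\, E^3 = -\delta\sin(\rho) < 0.
\end{equation*}
I would therefore build $\sigma^{\vee}$ by \emph{inducing} the BMSZ stability condition on $D^b(X)$ (which is available, as $(X,\omega)$ satisfies $(a)$, $(b)$) along Orlov's blowup semiorthogonal decomposition $D^b(\tilde{X}) = \langle D^b(X), \olo_E(-1), \olo_E(-2)\rangle$, gluing in the two exceptional objects supported on $E$; alternatively one invokes the blowup case treated in \cite{BernardaraMacri_threefolds}.

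The perturbation is engineered so that, after complexifying by $\tilde{L}$, the relevant period keeps the correct phase. The key computation is
\begin{equation*}
\tilde{\omega}_{\delta} + \ii c_1(\tilde{L}^{\vee}) = \pi^*(\omega + \ii c_1(L^{\vee})) - \ii\delta\, e^{-\ii\rho}[E],
\end{equation*}
whence, using $\pi^*(\cdot)\cdot E^j = 0$ for $j \geq 1$ and $E^3 = 1$,
\begin{equation*}
\int_{\tilde{X}}(\tilde{\omega}_{\delta} + \ii c_1(\tilde{L}^{\vee}))^3 = \int_X(\omega + \ii c_1(L^{\vee}))^3 + \ii\delta^3 e^{-3\ii\rho}.
\end{equation*}
The correction is $O(\delta^3)$, so for $\delta$ small the argument stays close to $\hat{\theta} \in (\pi/2,\pi)$ and the supercritical-phase hypothesis of Theorem \ref{MainThmIntro} persists; likewise $\mu_{\tilde{\omega}_{\delta}}(\tilde{L})$ is close to $\mu_{\omega}(L) < 0$, and the inequality $\tilde{\omega}_{\delta}\cdot c_1(\tilde{L})^2 - \tilde{\omega}_{\delta}^3/3 > 0$ survives. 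The conditions $\cos(\rho) \in \Q$ and $k$ divisible make $\tilde{L}^k = \pi^* L^k(k\cos(\rho)\delta E)$ a genuine integral line bundle, while the rotation by $\rho$ gives the $E$-contribution precisely the phase needed for the mirror special Lagrangian equation to remain supercritical.

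With the numerics in place, $\hat{\cL}$ lies in the heart $\cA^{\vee}$ by the same degree/phase check as in Theorem \ref{MainThm}, and I would then reproduce the implication semistability $\Rightarrow$ special Lagrangian: semistability of $\hat{\cL}$ with respect to $\sigma^{\vee}$, together with genericity, forces $\tilde{L}$ to admit a deformed Hermitian--Yang--Mills connection with respect to $k\tilde{\omega}_{\delta}$ (the analogue of Theorem \ref{dHYMThm}, whose hypotheses hold by the supercriticality just established), and the Leung--Yau--Zaslow/SYZ transform turns this connection into a special Lagrangian section isomorphic to $\hat{\cL}$. The genericity statement is again about $[\omega] \in H^{1,1}(X,\R)$, with $\delta,\rho$ fixed; one must only check that the finitely many new walls, coming from the toric subvarieties created by the blowup --- $E\cong\PP^2$ and the torus-invariant lines and points it contains --- are also avoided, which holds off a proper analytic subset. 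Finally the analogues of $(iii)$, $(iv)$ are immediate: $\tilde{X}$ is again projective toric, so the Coates--Corti--Iritani--Tseng mirror theorem and Fang's computation of charges apply directly, giving $(iv)$ in general and $(iii)$ whenever $\tilde{X}$ is weak Fano, i.e.\ whenever $\pi^*(-K_X) - 2E$ is big and nef.

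The main obstacle is the first step: producing an honest Bridgeland stability condition on $D^b(\tilde{X})$ despite the failure of $(a)$ for $E$. One must verify that the induced heart is compatible with the central charge $Z^{\vee}$ attached to $\hat{\cL}$ and with the perturbed polarisation $k\tilde{\omega}_{\delta}$ --- in particular that $\hat{\cL}$ and all its potential destabilisers are correctly positioned relative to the glued exceptional objects $\olo_E(-1), \olo_E(-2)$ --- so that semistability in the induced condition still detects the dHYM/special-Lagrangian property. Controlling these exceptional contributions uniformly in the small parameters $\delta,\rho$ is where the real work lies.
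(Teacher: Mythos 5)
Your numerical setup is right (in particular the identity $\tilde{\omega}_{\delta} + \ii c_1(\tilde{L}^{\vee}_{\delta}) = \pi^*(\omega + \ii c_1(L^{\vee})) - \ii\delta e^{-\ii\rho}[E]$ is exactly the mechanism the paper exploits), but there are two genuine gaps. First, the stability condition: gluing along Orlov's semiorthogonal decomposition $\langle D^b(X), \olo_E(-1), \olo_E(-2)\rangle$ is not an available technique here (gluing hearts and central charges across a semiorthogonal decomposition into a Bridgeland stability condition is a well-known open-ended problem), and your fallback "invoke the blowup case in \cite{BernardaraMacri_threefolds}" does not exist --- on the contrary, $\Bl_p\PP^3$ with $H = -K_X/2$ is precisely the known counterexample to the Bogomolov--Gieseker inequality cited in the paper. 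What the paper actually uses is Schmidt's theorem (\cite{Schmidt_blowups}): the tilting equivalence $\Phi\!: D^b(\tilde{X}) \cong D^b(X,\cB)$ with $\cB = \pi_*\shEnd(\olo_{\tilde{X}} \oplus \olo_{\tilde{X}}(E) \oplus \olo_{\tilde{X}}(2E))$, the heart $\cA_{\alpha,\beta}(\tilde{X}) := \Phi^{-1}(\cA_{\alpha,\beta}(X,\cB))$ built from the forgetful Chern character, and the modified central charge $Z^{\tilde{\Gamma}}_{\alpha,\beta}$ with $\tilde{B} = 2E + \beta\tilde{\omega}$, $\tilde{\Gamma} = -E^2/6$. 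This is in the spirit of your "inducing from $D^b(X)$", but it is an actual theorem, and it has concrete consequences you omit: one needs $\Phi(\tilde{L}^k) = (L^k)^{\oplus 3}$ (forcing $k$ divisible with $k\delta - 2 > 0$), and the relation $Z^{\tilde{\Gamma}} = \tfrac{1}{3}\, Z^{k\omega}\circ j \circ \Phi$, which is why the central charges in parts $(iii)$, $(iv)$ carry a factor $\tfrac{1}{3}$ and are expressed as base periods with $O(k^{-1},\delta)$ errors --- your "immediate" application of Fang/CCIT on $\tilde{X}$ would compute the wrong charge, since $Z^{\tilde{\Gamma}}$ is not the Douglas charge $-\int_{\tilde{X}} e^{-\ii k\tilde{\omega}_{\delta}}\ch(\cdot)$.

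Second, and more seriously, your treatment of subvarieties inside $E$ fails. Semistability of $\hat{\cL}$ yields phase inequalities only against the destabilisers $\cS_{\tilde{V}}$ that one can actually place inside the heart, and the paper's Propositions on the blowup construct these only for $\tilde{V} \not\subseteq E$ (the pushforward computations and torsion-sheaf arguments break down for $\tilde{V} \subseteq E$); no inequality for $E$ itself or for curves in $E$ comes out of semistability, so there is nothing for genericity to sharpen. Genericity of $[\omega] \in H^{1,1}(X,\R)$ cannot rescue this: for $\tilde{V} \subseteq E$ the relevant quantity is
\begin{equation*}
\delta^{\dim\tilde{V}}\bigl(\cos(\dim\tilde{V}\,\rho) - \cot(\varphi_{\delta})\sin(\dim\tilde{V}\,\rho)\bigr),
\end{equation*}
whose sign is governed by $\rho$ versus $\varphi$, independently of $[\omega]$; avoiding walls gives nonvanishing, not positivity. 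The paper's Lemma on the blowup dHYM criterion closes this hole differently: it shows the positivity along all $\tilde{V} \subseteq E$ is \emph{automatic} for the engineered data, since the condition reduces to $\cot(\dim\tilde{V}\,\rho) > \cot(\varphi_{\delta})$, i.e. $\varphi > (n-1)\rho$, which holds for all sufficiently small $\rho$. This is the actual reason for the rotation $e^{\ii\rho}$ in the definitions of $\tilde{\omega}_{\delta}$ and $\tilde{L}_{\delta}$ --- not preservation of supercriticality, which is a separate, easier $O(\delta)$ statement. Without this automatic-positivity lemma (or a construction of destabilisers supported over $E$, which the paper deliberately avoids), your argument cannot verify the Nakai--Moishezon hypothesis of Theorem \ref{dHYMThm} on $\tilde{X}$, and the implication from semistability to the special Lagrangian property does not go through.
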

Theorem \ref{BlpMainThmIntro} is proved with a more detailed statement in Section 3 (Theorem \ref{BlpThm}).
\begin{exm} The (only) Fano example is obtained when $\tilde{X} = \Bl_p \PP^3$.
\end{exm}
We also describe a relation between stability conditions and special Lagrangians in a relative setting. Differential-geometrically, this is the well-studied notion of \emph{adiabatic limit} (see e.g. \cite{StoppaTwisted}, Section 4 for an exposition in a similar context). On the mirror side, this corresponds to a special limit in the complex moduli space of $(\cY_{q_k}, W_{q_k})$. The theory of relative stability conditions is developed in great generality in \cite{Bayer_families}.  
\begin{thm}[Theorem \ref{AdiabaticThm}]\label{AdiabaticThmIntro} Let $f\!: X^n \to \PP^1$ be a toric submersion with relative dimension $d = 2, 3$ (so $n = d + 1$). When $d = 3$ suppose the fibres satisfy the assumptions of Theorems \ref{MainThmIntro} or \ref{BlpMainThmIntro}. Let $\omega_m$ be a K\"ahler class on $X$ of the form
\begin{align*}
\omega_m = m f^{*}\omega_{\PP^1} + \omega_X, 
\end{align*}
where $\omega_X$ is relatively K\"ahler and $m > 0$ is sufficiently large. Fix a line bundle $L$ on $X$ such that for the dual $L^{\vee}$ we have  
\begin{equation*}
\arg \int_X (\omega + \ii c_1(L^{\vee}))^n \in \left(\frac{(n-2)\pi}{2}, \frac{(n-1)\pi}{2}\right) \mod 2\pi.
\end{equation*}
Define the graded Lagrangian section $\tilde{\cL}$ in the almost Calabi-Yau manifold $\,\cU_{q_k}$ mirror to $(X, k \omega_m)$,
\begin{equation*}
\tilde{\cL} = \cL(L^k, h)[d-1], 
\end{equation*}
as a shift of the SYZ transform $\cL(L^k, h) \subset \cU_{q_k} \subset \cY_{q_k}$ with respect to $k\omega_m$, for $m, k > 0$ sufficiently large (depending only on $(X, \omega)$ and $L$). 

There is a stability condition $\relsigma$ on $D^b(X)$ over $\PP^1$ (in the sense of \cite{Bayer_families}, Definition 1.1) such that if $L^k[d-1]$ is \emph{$\relsigma$-semistable} and $[\omega_X] \in H^{1,1}(X, \R)$ is generic, then $\tilde{\cL}$ is isomorphic in $\FS(\cY_{q_k})$ to a \emph{special} Lagrangian section. Analogues of $(iii)$ and $(iv)$ in Theorem \ref{MainThmIntro} also hold.
\end{thm}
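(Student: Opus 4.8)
The plan is to establish Theorem \ref{AdiabaticThmIntro} by reducing the relative (adiabatic) statement to the absolute results of Theorems \ref{MainThmIntro} and \ref{BlpMainThmIntro} applied fibrewise, and then invoking the relative stability condition formalism of Bayer \cite{Bayer_families} to glue the fibrewise data over the base $\PP^1$. Concretely, I would first analyse the behaviour of the K\"ahler class $\omega_m = m f^*\omega_{\PP^1} + \omega_X$ as $m \to \infty$: in this adiabatic limit the geometry decouples into a base direction and a relative (fibre) direction, and the leading-order contribution to the central charge $Z^\vee(\tilde{\cL})$ is governed by the restriction of $L$ and $\omega_X$ to a general fibre $X_t = f^{-1}(t)$, which is a $d$-dimensional toric variety satisfying (when $d = 3$) the hypotheses $(a)$, $(b)$. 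The phase condition $\arg \int_X (\omega + \ii c_1(L^\vee))^n \in \left(\frac{(n-2)\pi}{2}, \frac{(n-1)\pi}{2}\right)$ is precisely calibrated so that, after the shift by $[d-1]$, the object sits in the supercritical phase window of the fibrewise problem, matching the numerical condition $\omega \cdot (c_1(L))^2 - \frac{\omega^3}{3} > 0$ (equivalently the surface analogue when $d = 2$).

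Next I would construct the relative stability condition $\relsigma$ on $D^b(X)$ over $\PP^1$. The Bernardara-Macr\`i-Schmidt-Zhao stability conditions $\sigma^{k\omega_m}_{\alpha,\beta}$ of Theorem \ref{ToricBStabThm} are defined on the absolute category $D^b(X)$; the content here is to organise them into a family over the base in the sense of \cite{Bayer_families}, Definition 1.1, so that the support property and the gluing of hearts hold compatibly with the fibration structure $f$. I expect the large-$m$ limit to be essential: for $m \gg 0$ the relative tilt heart is controlled by the relative polarisation $\omega_X$ on the fibres, and the relative central charge factors (to leading order) as a product of a base period and the fibrewise Douglas-type central charge $\int_{X_t} e^{-\ii k\omega_X|_{X_t}} \ch(L^k|_{X_t})$. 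The hypothesis that $L^k[d-1]$ is $\relsigma$-semistable should then translate, fibrewise and to leading order in $1/m$, into semistability of $L^k|_{X_t}[d-1]$ with respect to the absolute fibre stability condition, placing us in the setting of Theorems \ref{MainThmIntro} or \ref{BlpMainThmIntro}.

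Granting this reduction, part $(ii)$ of those theorems produces, for generic $[\omega_X]$, a special Lagrangian section in the isomorphism class of the fibrewise SYZ transform $\cL(L^k|_{X_t}, h)[d-1]$ inside the fibre mirror. The final step is to assemble these fibrewise special Lagrangian sections into a global special Lagrangian section of $\cU_{q_k} \to \Delta^o$: here one argues that the adiabatic limit $m \to \infty$ makes the base direction negligibly coupled, so that a section which is special fibrewise can be corrected to a genuine global special Lagrangian, with the error controlled by the same $O(k^{-1})$ and $O(m^{-1})$ estimates governing the central charge. The asymptotic-period statements, i.e.\ the analogues of $(iii)$ and $(iv)$, then follow by combining the fibrewise Landau-Ginzburg potential and integration cycle (via the toric mirror symmetry of \cite{CoatesCortiIritani_hodge} and \cite{Fang_charges}) with the base-period factor, again to leading order.

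The main obstacle I anticipate is the gluing step itself: verifying that the family of fibrewise tilt-stability hearts genuinely satisfies the relative support property and descends to a well-defined relative stability condition $\relsigma$ over $\PP^1$ in the precise sense of \cite{Bayer_families}. The BMSZ construction is delicate already on a single threefold (it requires the Bogomolov-Gieseker type inequalities controlled by assumptions $(a)$, $(b)$), and ensuring these inequalities hold uniformly across the fibres, and are compatible with the relative tilt along the base, is where the large-$m$ hypothesis must do genuine work rather than merely simplifying leading-order asymptotics. A secondary difficulty is that the correspondence between fibrewise semistability and global $\relsigma$-semistability is only exact in the limit; making this precise, so that semistability of $L^k[d-1]$ for finite but large $m$ suffices, likely requires an openness/perturbation argument for the supercritical dHYM equation analogous to the one underlying Theorem \ref{dHYMThm}.
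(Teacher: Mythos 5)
Your proposal reduces correctly to fibrewise semistability, but the final step --- assembling the fibrewise special Lagrangians into a global special Lagrangian section in the adiabatic limit --- is a genuine gap, and it is precisely the step the paper's argument is engineered to avoid. The special Lagrangian asserted in the theorem lives in the mirror of the \emph{total space} $(X, k\omega_m)$, and by Theorem \ref{LYZThm} its existence is equivalent to solving the dHYM equation for $L^{\vee} \to X$ with the total-space angle $\hat{\theta}_X$; fibrewise special sections only correspond to fibrewise dHYM solutions, and ``correcting'' them to a global solution as $m \to \infty$ is a hard singular-perturbation problem (the linearised operator degenerates in the adiabatic limit) for which you offer no mechanism. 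The paper's route is purely cohomological: Proposition \ref{AdiabaticdHYMProp} shows, by expanding the topological angles ($\varphi_{X_t} = \varphi_X - \tfrac{\pi}{2} + O(m^{-1})$, so $\varphi_{X_t} \in (0, \tfrac{\pi}{2})$ for $m \gg 1$) and the Nakai--Moishezon integrals in powers of $m$, that the dHYM positivity criterion \eqref{dHYMPosIntro} on $X$ holds if and only if it holds on a fibre: positivity along the fibre class reduces to $\cot(\varphi_F) > -\tan(\varphi_F)$, automatic for $\varphi_F \in (0,\tfrac{\pi}{2})$, while for $V$ not a fibre the leading term is $\kappa m \tan(\varphi_F)$ times the fibrewise positivity integral over $V \cap F$. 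One then applies Theorem \ref{dHYMThm} \emph{directly on the total space $X$}, with no gluing at all, and Corollary \ref{BStabdHYMCor} applied on the fibres supplies the required fibrewise positivity from $\sigma_t$-semistability.

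Two secondary corrections. First, you over-engineer the construction of $\relsigma$: the paper does not glue BMSZ hearts into a heart on $D^b(X)$, nor verify a relative support property; it quotes \cite{Bayer_families}, Theorem 1.3 for existence, and a relative stability condition there is by definition a family $\sigma_t = (\cA_t, Z_t)$ on the fibre categories $D^b(X_t)$, so $\relsigma$-semistability of $L^k[d-1]$ yields $\sigma_t$-semistability of the restrictions \emph{by construction} --- your concern that the correspondence between relative and fibrewise semistability is ``only exact in the limit'' and needs an openness or perturbation argument does not arise. (Uniformity of the Bogomolov--Gieseker input across fibres is also automatic, since all fibres of a toric submersion over $\PP^1$ are isomorphic to one fixed toric manifold.) Second, the analogues of $(iii)$, $(iv)$ proved in the paper are statements about the \emph{fibrewise} central charges $Z(L^k_t[d-1]) = (-1)^{d-1}\int_{X_t} e^{-\ii k \omega_X}\ch(L^k_t)$ approximating fibre periods, deduced from \cite{J_toricThomasYau}, Theorems 2.5 and 2.20 exactly as in Theorem \ref{MainThm}; the paper makes no attempt at your proposed factorisation of a total-space central charge into a base period times a fibrewise Douglas charge, and no such factorisation is needed.
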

The parameter $m \gg 1$ corresponds to the adiabatic limit of the metric on $X$.
\begin{rmk}
Theorems \ref{MainThmIntro}, \ref{BlpMainThmIntro} and \ref{AdiabaticThmIntro} are made possible by a key analytic existence result, due to several authors, known as the Nakai-Moishezon criterion for deformed Hermitian Yang-Mills (dHYM) connections; this is discussed in Section \ref{dHYMSec} (Theorem \ref{dHYMThm}). 

In particular, our results rely on showing that, under suitable conditions on threefolds, Bridgeland semistability implies the existence of a dHYM connection, see in particular Corollaries \ref{BStabdHYMCor}, \ref{BlpBstabdHYMCor} and the proof of Theorem \ref{AdiabaticThm} $(i)$. This viewpoint, not involving special Lagrangians, may be preferred by some readers.   
\end{rmk}
\begin{rmk}
Our results, while limited to a particular class of Lagrangian sections, seem to give the first examples on threefolds for which Bridgeland semistability implies the special Lagrangian condition, as in Conjecture \ref{JoyceConj} $(ii)$. As we discussed, our approach is motivated by the works of Collins-Yau \cite{CollinsYau_dHYM_momentmap}, Collins-Lo-Yau \cite{CollinsLoYau_K3} and Collins-Shi \cite{Collins_stability}.  

Li \cite{YangLi_ThomasYau} proves some fundamental results on special Lagrangians and stability, in terms of phase inequalities satisfied by exact triangles in the Fukaya category, under suitable positivity and transversality assumptions. Li also discusses in detail the possible relations to Bridgeland semistability, see in particular \cite{YangLi_ThomasYau}, Section 3.6.

Lotay and Oliveira \cite{Lotay_GibbonsHawkingSLag} prove that the original version of the Thomas conjecture in \cite{Thomas_MomentMirror} holds for circle-invariant compact Lagrangians in the hyperk\"ahler 4-manifolds obtained by the Gibbons-Hawking ansatz. A possible relation to Bridgeland semistability (i.e. Conjecture \ref{JoyceConj}) is mentioned in \cite{Lotay_GibbonsHawkingSLag}, Section 5.4.

Haiden, Katzarkov and Simpson \cite{Haiden_spectral} study similar questions for analogues of special Lagrangians known as spectral networks on Riemann surfaces; a possible relation to Conjecture \ref{JoyceConj} is explained at the end of \cite{Haiden_spectral}, Section 1.

The recent work of Chiu and Lin \cite{YLShen_K3FibredSLag} constructs special Lagrangian submanifolds in collapsing Calabi-Yau 3-folds fibered by K3 surfaces, and also discusses a possible relation to Conjecture \ref{JoyceConj} in \cite{YLShen_K3FibredSLag}, Section 6.3.  

Sj\"ostr\"om Dyrefelt and Khalid, \cite{SohaibZak_higherdim} Sections 5.2 and 5.3, study in particular wall-chamber decompositions of the K\"ahler cone according to the solvability of certain dHYM equations with respect to fixed classes. These are very similar to the wall-chamber decompositions of the K\"ahler cone according to the Bridgeland (semi)stability of a given object (see e.g. \cite{BayerMacriToda_Bogomolov}, Proposition 2.2.2). 
\end{rmk}

In Section \ref{MainSec} we provide some background on stability conditions on threefolds and on the results on dHYM connections needed for the proof of Theorem \ref{MainThmIntro}. The theorem is stated more precisely and proved in that Section as Theorem \ref{MainThm}. 

Similarly, Sections \ref{BlpSec} and \ref{RelSec} contain some background on a class of stability conditions on blowups and on relative stability conditions, respectively, together with some preliminary computations on dHYM connections, leading to the proofs of Theorems \ref{BlpThm} and \ref{AdiabaticThm}. 

Section \ref{UnstSec} contains some speculations motivated by the results of Mete \cite{Mete_P3} on $X = \Bl_p\PP^3$. We provide a reasonable candidate for the minimal slope semistable quotient, admitting a weak solution of the special Lagrangian equation, for an unstable Lagrangian section with respect to a (conjectural) Harder-Narasimhan filtration in this example.   
\section{Main result on toric threefolds}\label{MainSec}
In this Section we prove our main result on toric threefolds, Theorem \ref{MainThmIntro}, stated slightly more precisely here as Theorem \ref{MainThm}.

\subsection{Deformed Hermitian Yang-Mills connections}\label{dHYMSec} As for the general results of \cite{J_toricThomasYau}, the proof is based on the Leung-Yau-Zaslow correspondence between special Lagrangian sections and deformed Hermitian Yang-Mills connections.
\begin{thm}[Leung-Yau-Zaslow \cite{LeungYauZaslow}, see also \cite{Chan_survey} Section 3, \cite{CollinsYau_dHYM_momentmap}, Section 9]\label{LYZThm} As in the Introduction, suppose $(X, \omega)$ is a projective toric manifold, endowed with a torus-invariant K\"ahler form $\omega$. 

Then a SYZ Lagrangian section $\cL = \cL(L, h) \subset (\cU_{q}, \omega^{\vee}, \Omega)$ is special Lagrangian, i.e. it satisfies the equation
\begin{equation*}
\Imm e^{-\ii \hat{\theta}} \Omega|_{\cL} = 0
\end{equation*} 
if and only if the Chern connection of $h^{\vee}$ on the dual line bundle $L^{\vee}$ is \emph{deformed Hermitian Yang-Mills}, i.e. it satisfies the equation
\begin{equation*}
\Imm e^{-\ii \hat{\theta}}(\omega - \ii F(h^{\vee}))^n = 0,
\end{equation*}
or equivalently,
\begin{equation*}
\Imm e^{-\ii \hat{\theta}}(\omega + \ii c^{h^{\vee}}_1(L^{\vee}))^n = 0,
\end{equation*}
where $c^{h^{\vee}}_1(L^{\vee})$ denotes the Chern-Weil representative, and 
\begin{equation*}
\int_X(\omega + \ii  c^{h^{\vee}}_1(L^{\vee}))^n \in e^{\ii \hat{\theta}} \R_{>0}.
\end{equation*}
\end{thm}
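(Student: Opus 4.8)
The plan is to reduce both the special Lagrangian equation on $\cU_q$ and the deformed Hermitian Yang--Mills equation on $X$ to one and the same pointwise equation on the base $\Delta^o$, expressed through the convex potentials encoding $\omega$ and $h$, and then to observe that the SYZ transform matches the two descriptions term by term. First I would fix semi-flat coordinates: choose holomorphic coordinates $z_j = e^{y_j + \ii \theta_j}$ on $X^o \cong (\C^*)^n$, so that the torus-invariant K\"ahler form is $\omega = \ii\del\delbar G(y)$ for a strictly convex potential $G$, with moment coordinates $x = \nabla_y G$, while the invariant Hermitian metric on $L$ has curvature $F(h) = \ii \del\delbar\, \tilde\phi(y)$. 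On the mirror, the dual fibration $\cU_q \subset \cY_q$ carries coordinates $u_j = e^{x_j + \ii \check\theta_j}$, the holomorphic volume form $\Omega = \bigwedge_j d\log u_j = \bigwedge_j (dx_j + \ii\, d\check\theta_j)$ and the mirror K\"ahler form $\omega^{\vee} = \sum_j dy_j \wedge d\check\theta_j$, and the SYZ transform realises $\cL(L,h)$ as the graph $\{\check\theta = \nabla_y \tilde\phi\}$. This graph is $\omega^{\vee}$-Lagrangian because $\sum_j dy_j \wedge d\check\theta_j$ restricts to $\sum_{j,k} \tilde\phi_{y_j y_k}\, dy_j \wedge dy_k = 0$ by symmetry of the Hessian.

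The first computation is the pullback of $\Omega$ to this section. Using $dx_j = \sum_k (\operatorname{Hess}_y G)_{jk}\, dy_k$ and $d\check\theta_j = \sum_k (\operatorname{Hess}_y \tilde\phi)_{jk}\, dy_k$ on $\cL$, one finds
\begin{equation*}
\Omega|_{\cL} = \det\!\big(\operatorname{Hess}_y G + \ii\, \operatorname{Hess}_y \tilde\phi\big)\, dy_1 \wedge \cdots \wedge dy_n = \det(\operatorname{Hess}_y G)\,\prod_{j=1}^n (1 + \ii \lambda_j)\, dy_1 \wedge \cdots \wedge dy_n,
\end{equation*}
where the $\lambda_j$ are the eigenvalues of $(\operatorname{Hess}_y G)^{-1} \operatorname{Hess}_y \tilde\phi$. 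Since $\det(\operatorname{Hess}_y G) > 0$ is a positive real factor, the special Lagrangian equation $\Imm\, e^{-\ii \hat\theta} \Omega|_{\cL} = 0$ is equivalent to the pointwise condition $\Imm\big(e^{-\ii \hat\theta} \prod_j (1 + \ii \lambda_j)\big) = 0$ on $\Delta^o$.

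The second point is that these $\lambda_j$ are exactly the eigenvalues of the curvature relative to $\omega$. In the chosen coordinates $\operatorname{Hess}_y G$ is the matrix of $\omega$ and $\operatorname{Hess}_y \tilde\phi$ that of $F(h)$, so the $\lambda_j$ are the eigenvalues of $F(h)$ with respect to $\omega$ and
\begin{equation*}
(\omega + \ii F(h))^n = \prod_{j=1}^n (1 + \ii \lambda_j)\, \omega^n
\end{equation*}
as top-degree forms, with $\omega^n > 0$. Hence $\Imm\, e^{-\ii \hat\theta}(\omega + \ii F(h))^n = 0$ is the same pointwise equation, and the two conditions are equivalent in both directions at once, the SYZ transform being a bijection on the relevant torus-invariant data. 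It remains to reconcile signs and normalisation: the curvature of the dual metric satisfies $F(h^{\vee}) = -F(h)$, so $\omega - \ii F(h^{\vee}) = \omega + \ii F(h)$, which together with the Chern--Weil convention $c^{h^{\vee}}_1(L^{\vee}) = -F(h^{\vee})$ gives the two stated forms of the equation; and integrating $\Imm\, e^{-\ii \hat\theta}(\omega + \ii c^{h^{\vee}}_1(L^{\vee}))^n = 0$ over $X$, together with the pointwise constancy of the phase, pins down $\hat\theta = \arg \int_X (\omega + \ii c^{h^{\vee}}_1(L^{\vee}))^n$ and yields $\int_X (\omega + \ii c^{h^{\vee}}_1(L^{\vee}))^n \in e^{\ii \hat\theta} \R_{>0}$.

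The step I expect to be the genuine obstacle is setting up the semi-flat dictionary precisely enough that the eigenvalue comparison is clean. There are two integral affine structures on $\Delta^o$, the symplectic one (coordinate $x$) and the complex one (coordinate $y$), which the SYZ transform interchanges; if the Lagrangian section is written as a graph over the wrong one, the change of variables $x \leftrightarrow y = \nabla_y G$ introduces spurious first-derivative terms involving third derivatives of $G$, and the identification of the $\lambda_j$ with the curvature eigenvalues fails. Carrying out the computation consistently in the $y$-coordinates, so that $\operatorname{Hess}_y G$ and $\operatorname{Hess}_y \tilde\phi$ literally represent $\omega$ and $F(h)$, is what makes the two determinants agree on the nose; once this bookkeeping, along with the sign convention relating $L$ and $L^{\vee}$ and the grading that fixes $\hat\theta$, is in place, both implications of the theorem follow simultaneously.
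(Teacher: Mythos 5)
Your proposal is correct, and it is essentially the canonical argument: the paper itself quotes Theorem \ref{LYZThm} without proof from \cite{LeungYauZaslow, Chan_survey, CollinsYau_dHYM_momentmap}, and your semi-flat computation --- writing $\omega$ and $F(h)$ as Hessians of potentials in the complex affine coordinates $y$, realising $\cL(L,h)$ as the graph $\{\check\theta = \nabla_y\tilde\phi\}$, and identifying both $\Imm\, e^{-\ii\hat\theta}\,\Omega|_{\cL} = 0$ and the dHYM equation with the single pointwise condition $\Imm\bigl(e^{-\ii\hat\theta}\prod_j(1+\ii\lambda_j)\bigr)=0$ on the base --- is exactly the proof given in those references. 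Your closing caveat about working consistently in the $y$-coordinates (rather than mixing the two affine structures exchanged by the Legendre transform $x = \nabla_y G$) correctly isolates the one place where the bookkeeping genuinely matters, and your handling of the duality signs matches the paper's conventions.
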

More generally, fixing an auxiliary $(1, 1)$-class $[\alpha_0]$ on a compact K\"ahler manifold $(X, \omega)$, the dHYM equation seeks a representative $\alpha \in [\alpha_0]$ such that
\begin{equation}\label{dHYM}
\Imm e^{-\ii \hat{\theta}}(\omega + \ii \alpha)^n = 0,
\end{equation}
where
\begin{equation*}
\int_X(\omega + \ii\alpha)^n \in e^{\ii \hat{\theta}} \R_{>0}.
\end{equation*}

Denoting by $\lambda_1, \cdots, \lambda_n$ the eigenvalues of $\omega^{-1} \alpha \in \Gamma(X, \End(T^{1, 0}))$, a direct computation shows that the \emph{constant Langrangian phase equation}
\begin{equation}\label{LagPhaseEqu}
\sum^n_{i = 1} \operatorname{arccot}(\lambda_i) = \varphi \in \R
\end{equation}
implies that the dHYM equation \eqref{dHYM} holds with phase $e^{\ii \hat{\theta}}$ determined by
\begin{equation*}
\varphi = \frac{n}{2}\pi - \hat{\theta} \mod 2\pi.
\end{equation*}

The key existence result for the constant Lagrangian phase equation \eqref{LagPhaseEqu} is the so-called \emph{Nakai-Moishezon} or \emph{dHYM positivity criterion}, due to Chen \cite{GaoChen_Jeq_dHYM}, Datar-Pingali \cite{DatarPingali_dHYM}, Chu-Li-Takahashi \cite{Takahashi_dHYM}, building on the work of Collins, Jacob and Yau (see \cite{CollinsJacobYau, CollinsYau_dHYM_momentmap, JacobYau_special_Lag}) and Collins-Sz\'ekelyhidi \cite{CollinsSzekelyhidi}. 

\begin{thm}[\cite{Takahashi_dHYM}, Corollary 1.5]\label{dHYMThm} Let $X$ be a projective manifold, with fixed classes $\omega_0$, $\alpha_0$ as above. Suppose that there is a lift $\hat{\theta} \in \R$ of the phase $e^{\ii \hat{\theta}}$ such that
\begin{equation*}
\hat{\theta} \in \left(\frac{n-2}{2}\pi, \frac{n}{2}\pi\right).
\end{equation*}
Set 
\begin{equation*}
\varphi := \frac{n}{2}\pi - \hat{\theta} \in (0, \pi).
\end{equation*}

Then there exists a solution of the constant Lagrangian phase equation \eqref{LagPhaseEqu} for a representative $\omega \in [\omega_0]$ if, and only if, for all proper irreducible subvarieties $V \subset X$ we have
\begin{equation}\label{dHYMPosIntro}
\int_V \Rea(\ii \omega_0 + \alpha_0)^{\dim V} - \cot(\varphi)\Imm(\ii \omega_0 +  \alpha_0)^{\dim V} > 0
\end{equation}
The solution is unique.
\end{thm}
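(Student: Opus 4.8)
The plan is to treat this as a fully nonlinear elliptic existence problem and to prove the two implications separately, the reverse one (positivity $\Rightarrow$ solvability) being the substantial part. For necessity I would argue pointwise: if a representative solving \eqref{LagPhaseEqu} exists, then restricting the closed form $\omega + \ii\alpha$ to any irreducible $V \subset X$ and using the constant-phase condition yields a pointwise lower bound for the integrand $\Rea(\ii \omega + \alpha)^{\dim V} - \cot(\varphi)\Imm(\ii \omega + \alpha)^{\dim V}$ in terms of a strictly positive volume density; integrating over $V$ and invoking cohomological invariance gives \eqref{dHYMPosIntro}. The supercritical range $\varphi \in (0,\pi)$ is precisely what guarantees this density is positive, so already this step signals where the phase assumption is essential.

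For sufficiency I would run the continuity method. Write the unknown as $\omega_t = \omega_0 + \ii\del\delbar \phi_t$ (or $\alpha_t = \alpha_0 + \ii\del\delbar\phi_t$ in the dual normalization) and deform along a path of phases, or of background classes, joining the given data to a model problem --- for instance a large multiple of $\omega_0$ --- where a solution is manifest. Openness of the set of solvable parameters is routine from the implicit function theorem once one checks that the linearization of the phase operator is a second-order linear elliptic operator with positive-definite symbol and no zeroth-order term, hence invertible modulo constants after normalizing $\phi_t$. The entire difficulty is then concentrated in closedness, i.e. in uniform a priori $C^\infty$ estimates along the path.

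For the estimates I would use the $\mathcal{C}$-subsolution framework of Collins--Sz\'ekelyhidi. The key structural input is that in the supercritical phase $\varphi \in (0,\pi)$ the level set $\{\sum_i \operatorname{arccot}(\lambda_i) = \varphi\}$ of the phase operator is convex (Collins--Jacob--Yau), so the equation is a concave fully nonlinear PDE admitting the second-order estimate of Collins--Sz\'ekelyhidi once a subsolution is available; the $C^0$ estimate then follows from an Alexandrov--Bakelman--Pucci type argument, and Evans--Krylov together with Schauder bootstraps $C^2$ to $C^\infty$. This concavity, and hence the whole estimate package, degenerates outside the supercritical range, which is exactly why the theorem is restricted to $\hat{\theta} \in (\frac{n-2}{2}\pi, \frac{n}{2}\pi)$.

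The hard part, and the step where the positivity hypothesis \eqref{dHYMPosIntro} actually enters, is producing the subsolution --- equivalently, ruling out failure of closedness. Here I would argue by contradiction following the Demailly--Paun mass-concentration philosophy adapted to this equation, as in G.\ Chen's proof of the Nakai--Moishezon criterion: if the continuity path stops short, a suitably normalized sequence of potentials must concentrate, and a capacity/mass estimate localizes the concentration onto a proper irreducible subvariety $V$; restricting the equation to $V$ and passing to the limit then forces $\int_V \Rea(\ii \omega_0 + \alpha_0)^{\dim V} - \cot(\varphi)\Imm(\ii \omega_0 + \alpha_0)^{\dim V} \le 0$, contradicting \eqref{dHYMPosIntro}. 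This runs as an induction on $\dim V$, the inductive hypothesis supplying solvability on each $V$, and it is precisely this concentration-to-subvariety argument --- rather than the by now standard interior estimates --- that I expect to be the main obstacle. Uniqueness is comparatively soft: the difference of two solutions satisfies a linear elliptic equation with no zeroth-order term, so the maximum principle forces it to be constant.
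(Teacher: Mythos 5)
You should note at the outset that the paper contains no proof of Theorem \ref{dHYMThm}: it is imported verbatim as \cite{Takahashi_dHYM}, Corollary 1.5, with the surrounding text crediting Chen \cite{GaoChen_Jeq_dHYM}, Datar--Pingali \cite{DatarPingali_dHYM} and Chu--Li--Takahashi \cite{Takahashi_dHYM}, building on Collins--Jacob--Yau \cite{CollinsJacobYau} and Collins--Sz\'ekelyhidi \cite{CollinsSzekelyhidi}. So the only meaningful comparison is with those cited works, and measured against them your sketch is an essentially accurate reconstruction of the actual strategy: necessity via the Collins--Jacob--Yau restriction lemma (the solved phase condition makes the integrand of \eqref{dHYMPosIntro} pointwise positive on any $V$, by eigenvalue monotonicity under restriction to complex subspaces); sufficiency via the continuity method with the $\mathcal{C}$-subsolution estimate package, whose applicability rests exactly on the convexity of the level set of $\sum_i \operatorname{arccot}(\lambda_i)$ for $\varphi \in (0,\pi)$; and the crux correctly located in manufacturing the subsolution from \eqref{dHYMPosIntro} by a Demailly--P\u{a}un-type concentration argument with induction on dimension, as in Chen's proof. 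Uniqueness is indeed soft, since the Lagrangian phase operator is elliptic everywhere ($\partial/\partial\lambda_i$ of the operator is $-1/(1+\lambda_i^2) \neq 0$), so the difference of two solutions satisfies a linear elliptic equation without zeroth-order term.

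The one place your sketch is too quick, and which is precisely the historical difficulty, is the final contradiction step ``concentration localizes onto $V$ and forces $\int_V(\cdots) \le 0$, contradicting \eqref{dHYMPosIntro}.'' Since there are in general infinitely many subvarieties, strict positivity for each $V$ does not by itself supply the \emph{uniform} lower bound that a compactness/concentration argument needs; Chen's theorem in \cite{GaoChen_Jeq_dHYM} accordingly assumes a uniform ($\varepsilon$-)stability strictly stronger than \eqref{dHYMPosIntro}. Closing exactly this gap --- upgrading strict, non-uniform positivity to solvability --- is the contribution of \cite{DatarPingali_dHYM} and \cite{Takahashi_dHYM}, and it is where the projectivity hypothesis on $X$, present in the statement but unused in your sketch, genuinely enters, through algebro-geometric input (restriction to divisors and hyperplane cuts, resolution, induction) rather than mass concentration alone. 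So your roadmap matches the literature, but the ``main obstacle'' you identify is resolved there only with this additional mechanism, which a complete write-up would have to supply.
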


The condition \eqref{dHYMPosIntro} is known as \emph{dHYM-positivity} or \emph{dHYM-stability}. Solutions of the constant Lagrangian phase equation \eqref{LagPhaseEqu} with $\varphi \in (0, \pi)$, and of the dHYM equation \eqref{dHYM} induced by these, are called \emph{supercritical}. Note in particular that the existence of supercritical solutions does not depend on the choice of K\"ahler form representing $[\omega_0]$. 
\begin{rmk}\label{ScaleInvarianceRmk} As we mentioned in the Introduction, it follows immediately from Theorems \ref{LYZThm} and \ref{dHYMThm} that the the dHYM positivity condition \eqref{dHYMPosIntro}, and the existence of supercritical solutions, are invariant under the common rescaling $[\omega] \mapsto k [\omega]$, $[\alpha] \mapsto k [\alpha]$ for $k > 0$.
\end{rmk}
The starting point for the proof of Theorem \eqref{MainThm} is the following result, proved in our previous work \cite{J_toricThomasYau}.
\begin{prop}[\cite{J_toricThomasYau}, proof of Theorem 2.5 and Remark 5.5]\label{SVProp} Let $(X, [\omega])$ be a projective toric manifold endowed with a K\"ahler class $[\omega]$ and a line bundle $L$. Suppose that there is a lift $\hat{\theta} \in \R$ of the corresponding phase $e^{\ii \hat{\theta}}$ such that $\hat{\theta} \in \left(\frac{n-2}{2}\pi, \frac{n}{2}\pi\right)$. Let $k > 0$ be sufficiently large, depending only on $(X, [\omega], c_1(L))$. 

There exist finitely many objects $\cS_V \in D^b(X)$, indexed by irreducible toric subvarieties $V\subset X$, with morphisms 
\begin{equation*}
\cS_{V}[-\codim V] \to L^k,
\end{equation*}
such that if we have 
\begin{equation*}
\arg\left( (-1)^{\codim V}\int_{X} e^{-\ii k\omega}\ch(\cS_V)\right) \leq \arg \int_{X} e^{-\ii k\omega}\ch(L^{k}), 
\end{equation*}
then the \emph{dHYM-semipositivity} condition holds for $(X,  [\omega], c_1(L^{\vee}))$, 
\begin{equation}\label{dHYMSemiPos}
\int_V \Rea(\ii \omega + c_1(L^{\vee}))^{\dim V} - \cot(\varphi)\Imm(\ii \omega +  c_1(L^{\vee}))^{\dim V} \geq 0 
\end{equation}
(this condition is also known as \emph{dHYM-semistability}).
\end{prop}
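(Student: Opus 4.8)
The plan is to translate both the hypothesis and the conclusion into a single phase comparison and then to match them in the large-$k$ limit. First I would record the trigonometric reformulation of the dHYM integral. Writing $w_V := \int_V (\ii\omega + c_1(L^\vee))^{\dim V}$ and using $\sin\varphi > 0$ (the supercritical assumption $\varphi = \frac{n}{2}\pi - \hat{\theta} \in (0,\pi)$), one has
\[
\int_V \Rea(\ii\omega + c_1(L^\vee))^{\dim V} - \cot(\varphi)\Imm(\ii\omega + c_1(L^\vee))^{\dim V} = -\frac{1}{\sin\varphi}\Imm\!\left(e^{-\ii\varphi} w_V\right),
\]
so the dHYM-semipositivity \eqref{dHYMSemiPos} over $V$ is equivalent to $\Imm(e^{-\ii\varphi}w_V)\le 0$, i.e. to $\arg w_V \le \varphi$ once the arguments are pinned to the correct branch. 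I would then check that $\varphi$ is itself the phase of the top integral: since $\ii\omega + c_1(L^\vee) = \ii\,\overline{\omega + \ii c_1(L^\vee)}$ on real classes, $w_X = \ii^n\,\overline{\int_X(\omega + \ii c_1(L^\vee))^n} = e^{\ii\varphi}\,\bigl|\!\int_X(\omega + \ii c_1(L^\vee))^n\bigr|$, so $\arg w_X = \varphi$. Thus the target inequality is exactly $\arg w_V \le \arg w_X$.

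Next I would produce the objects $\cS_V$ and compute their central charges. I would take $\cS_V$ to be the (derived) pushforward $\iota_{V*}(L^k|_V)$ of the restriction of $L^k$ to the toric subvariety $V$, resolved by a Koszul-type complex of line bundles adapted to the toric boundary; this resolution supplies the structural morphism $\cS_V[-\codim V]\to L^k$. The point is that the shift is designed so that the hypothesis is literally a phase comparison across this morphism, since
\[
(-1)^{\codim V}\int_X e^{-\ii k\omega}\ch(\cS_V) = \int_X e^{-\ii k\omega}\ch\bigl(\cS_V[-\codim V]\bigr) = Z\bigl(\cS_V[-\codim V]\bigr).
\]
A Grothendieck--Riemann--Roch computation gives $\ch(\cS_V) = [V]\,e^{k c_1(L)} + (\text{lower codimension})$, whence the leading term in $k$ of its Douglas central charge is
\[
\int_X e^{-\ii k\omega}\ch(\cS_V) = \frac{(-1)^{\dim V}}{(\dim V)!}\,k^{\dim V} w_V \bigl(1 + O(k^{-1})\bigr),
\]
because $\int_X e^{-\ii k\omega}[V]e^{k c_1(L)} = \int_V e^{k(c_1(L)-\ii\omega)}$ and $(c_1(L) - \ii\omega)^{\dim V} = (-1)^{\dim V}(\ii\omega + c_1(L^\vee))^{\dim V}$. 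Combining with $Z(L^k) = \int_X e^{-\ii k\omega}\ch(L^k) = \frac{(-1)^n}{n!}k^n w_X$ (exact for a line bundle), both $Z(\cS_V[-\codim V])$ and $Z(L^k)$ carry the same overall sign $(-1)^n$ and a positive power of $k$, so their phases are $n\pi + \arg w_V + o(1)$ and $n\pi + \arg w_X$ respectively.

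Finally I would close the implication by contraposition. Suppose dHYM-semipositivity over $V$ fails, i.e. $\arg w_V > \arg w_X = \varphi$ by some fixed gap $\eta_V > 0$. By the leading-order computation, for $k$ large (larger than a threshold determined by the fixed subleading GRR coefficients) the phase of $Z(\cS_V[-\codim V])$ exceeds that of $Z(L^k)$, so the hypothesis fails for that $V$; equivalently, the hypothesis forces $\arg w_V \le \varphi$, which is the claim. Since there are only finitely many toric subvarieties $V$ and all the classes $w_V$ depend only on $(X, [\omega], c_1(L))$, a single $k$ works uniformly, as required. The main obstacle is the careful construction of $\cS_V$ and of the morphism $\cS_V[-\codim V]\to L^k$ inside $D^b(X)$ so that they match the destabilising objects of the Bernardara--Macr\`i--Schmidt--Zhao tilt-stability framework (this is where the toric hypotheses and the reference to \cite{J_toricThomasYau} enter); the remaining delicate point is the branch bookkeeping for the arguments near the boundary $\arg w_V = \varphi$, which is controlled precisely by the supercritical range $\varphi \in (0,\pi)$.
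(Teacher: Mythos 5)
Your argument is correct and shares the paper's overall mechanism --- compute the Douglas central charge of an object supported on a toric subvariety $V$, observe that its leading term as $k \to \infty$ localises to $w_V = \int_V(\ii\omega + c_1(L^{\vee}))^{\dim V}$ with a sign $(-1)^{\codim V}(-1)^{\dim V} = (-1)^n$ matching that of $Z(L^k)$, and conclude by contraposition plus finiteness of the toric strata --- but your construction of the objects $\cS_V$ is genuinely different. The paper takes $\cS_V$ to be the cokernel of a large twist, $0 \to L^k \to L^k(k_V V) \to \cS_V \to 0$ for divisors, and for curves the cone of $\cS_{V_2} \to \cS_{V_1,V_2}$ built from such two-term complexes, with auxiliary parameters $k \gg k_1 \gg k_2 \gg 1$ entering the error estimates (e.g.\ $O(k k_V^2)$ against leading term $k^2 k_V \int_V e^{-\ii\omega}\ch(L)$); you instead take the pushforward $\iota_{V*}(L^k|_V)$ resolved by a Koszul complex, which kills the auxiliary parameters and gives the leading asymptotics in one Grothendieck--Riemann--Roch step, and you prove inline (via the identity $\Rea w_V - \cot(\varphi)\Imm w_V = -\sin(\varphi)^{-1}\Imm(e^{-\ii\varphi}w_V)$ and $\arg w_X = \varphi$) the equivalence between the phase inequality and \eqref{dHYMSemiPos} that the paper defers to \cite{J_toricThomasYau}, Section 3.2. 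Two caveats. First, your structural morphism is under-specified: for $\cS_V = \iota_{V*}(L^k|_V)$ the Koszul resolution naturally produces $\cS_V \to L^k(-\sum_i D_i)[\codim V]$ (equivalently, Grothendieck duality gives the map only after twisting by $\det N_{V/X}$), so you must compose with multiplication by the canonical section of $\olo(\sum_i D_i)$ --- harmless since the $D_i$ are effective toric divisors, and irrelevant to the leading asymptotics, but it should be said. Second, the paper's seemingly clumsier choice buys something downstream: because its $\cS_V$ sit in explicit exact sequences with twists of $L^k$ itself, Propositions \ref{DivisorsProp} and \ref{codim2Prop} can place $\cS_V[1]$ (resp.\ $\cS_V$) inside the tilted heart $\cA^{k\omega}_{\alpha,k\beta}$ with an injection into $L^k[2]$, and the parameters $k, k_1, k_2$ are tuned so that $Z(\cS_V)$ approximates periods in the weak Fano case; your objects would need separate arguments for both, so your route proves Proposition \ref{SVProp} cleanly but is not a drop-in replacement for the paper's construction in its later applications.
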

\begin{proof}[Proof (sketch)] We provide a sketch of the proof in the threefold case for the reader's convenience (as this is the main application in the present work). 

When $V \subset X$ is an irreducible toric divisor, the object $\cS_V \in D^b(X)$ associated with $V$ is a two-dimensional torsion sheaf, defined by the short exact sequence 
\begin{equation*}
0 \to L^k \to L^k(k_V V) \to \cS_V \to 0,
\end{equation*} 
where $k \gg k_V \gg 1$ are parameters to be determined. The exact sequence induces a canonical morphism $\cS_V \to L^k[1]$ in $D^b(X)$.

We compute
\begin{align*}
&\int_{X} e^{-\ii k\omega}\ch(\cS_V) = \int_{X} e^{-\ii k\omega} (\ch(L^k(k_V V)) - \ch(L^k))\\
& = \int_{X} e^{-\ii k\omega}\ch(L^k) (\ch(\olo(k_V V)) - 1 )\\
& = k^2 k_V \int_{X} e^{-\ii \omega}\ch(L) \cup c_1(\olo(V)) + O(k k^2_V)\\ 
& = k^2 k_V \int_{V} e^{-\ii \omega}\ch(L) + O(k k^2_V). 
\end{align*}
On the other hand, we note
\begin{equation*}
\int_{X} e^{-\ii k\omega}\ch(L^{k}) = k^3 \int_{X} e^{-\ii \omega}\ch(L).
\end{equation*}
Thus, for a fixed divisor $V$, for $k \gg k_V$, the inequality 
\begin{equation*}
\arg\left( - \int_{X} e^{-\ii k\omega}\ch(\cS_V)\right) \leq \arg \int_{X} e^{-\ii k\omega}\ch(L^{k})  
\end{equation*} 
implies 
\begin{equation*}
\arg\left( - \int_{V} e^{-\ii \omega}\ch(L)\right) \leq \arg \int_{X} e^{-\ii \omega}\ch(L).  
\end{equation*} 
By a direct computation (see e.g. \cite{J_toricThomasYau}, Section 3.2), under our assumptions, the latter inequality is equivalent to dHYM semipositivity \eqref{dHYMSemiPos} with respect to $V$. Since there are only finitely many irreducible toric divisors, we can choose the parameters $k$, $k_V = k_1$ uniformly over all $V$.

Similarly, if $V \subset X$ is a toric curve, we write $V$ as an intersection of toric divisors, $V = V_1 \cap V_2$. Consider the corresponding complexes of line bundles
\begin{equation*}
\cS_{V_2} = [L^k \to L^k(k_2 V_2)],\,\cS_{V_1, V_2} = [L^k(k_1 V_1) \to L^k(k_1 V_1 + k_2 V_2)].
\end{equation*}
The object $\cS_V$ corresponding to $V$ is defined as the cone of the natural morphism   
\begin{equation*}
\cS_{V_2} \to \cS_{V_1, V_2}.
\end{equation*}
In particular there is a canonical morphism $\cS_V \to \cS_{V_2}[1]$, and composing this with $\cS_{V_2}[1] \to L^k[2]$ yields the required morphism $\cS_V \to L^k[2]$.

We compute, using $\Q$-line bundles, 
\begin{align*}
&\int_{X} e^{-\ii k\omega}\ch(\cS_V) = \int_{X} e^{-\ii k\omega}\big(\ch(\cS_{V_1, V_2}) - \ch(\cS_{V_2})\big)\\
& = k^2 k_2 \int_{V_2} e^{-\ii \omega} \ch(L(k^{-1} k_1 V_1)) - k^2 k_2 \int_{V_2} e^{-\ii\omega} \ch(L) + O(k k^2_2)\\    
& = k^2 k_2 \int_{V_2} e^{-\ii \omega} \ch(L)(\ch(\olo(k^{-1} k_1 V_1)) - 1)+ O(k k^2_2)\\
& = k k_1 k_2 \int_{V_2} e^{-\ii \omega} \ch(L) \cup c_1(\olo(V_1)) + O(k^0 k^2_1 k _2) + O(k k^2_2)\\
& = k k_1 k_2 \int_{V_1 \cap V_2} e^{-\ii \omega} \ch(L) + O(k^0 k^2_1 k _2) + O(k k^2_2).  
\end{align*}
Thus, for fixed $k \gg k_1 \gg k_2$, chosen uniformly over the finitely many toric curves $V$, the inequality 
\begin{equation*}
\arg\left( \int_{X} e^{-\ii k\omega}\ch(\cS_V)\right) \leq \arg \int_{X} e^{-\ii k\omega}\ch(L^{k})  
\end{equation*} 
implies 
\begin{equation*}
\arg\left( \int_{V} e^{-\ii \omega}\ch(L)\right) \leq \arg \int_{X} e^{-\ii \omega}\ch(L)   
\end{equation*} 
and by direct computation (see \cite{J_toricThomasYau}, Section 3.2), the latter inequality is equivalent to dHYM semipositivity \eqref{dHYMSemiPos} with respect to curves $V$.  
\end{proof}
\begin{rmk} We note that the objects $\cS_V$ are not unique but depend on certain parameters (in the above proof, in the threefold case, these are denoted by $k, k_1, k_2$). It is shown in \cite{J_toricThomasYau}, proof of Theorem 2.5 and Remark 5.5 that these parameters can be chosen so that, when $X$ is a weak Fano manifold, the central charges
\begin{equation*}
Z(\cS_V) = \int_X e^{-\ii k \omega} \ch(\cS_V)
\end{equation*} 
are close to periods of the mirror holomorphic volume form as $k \to \infty$ (to order $O(k^{-1})$). This is used in the present work to prove Theorem \ref{MainThm} $(iii)$ and other similar statements.
\end{rmk}

Thus we obtain an immediate consequence for dHYM positivity.
\begin{cor}[\cite{J_toricThomasYau}, Remark 5.5]\label{SVCor} Suppose that $[\omega] \in H^{1,1}(X, \R)$ is \emph{generic} with respect to $c_1(L)$ in the sense that it does not lie in the union of the finitely many analytic subvarieties cut out by the equations
\begin{equation*} 
\int_V \Rea(\ii \omega + c_1(L^{\vee}))^{\dim V} - \cot(\varphi)\Imm(\ii \omega +  c_1(L^{\vee}))^{\dim V} = 0,
\end{equation*}
as $V \subset X$ ranges through irreducible toric subvarieties. Then, for sufficiently large $k >0$, the inequalities
\begin{equation}\label{SVIneq}
\arg\left( (-1)^{\codim V}\int_{X} e^{-\ii k\omega}\ch(\cS_V)\right) \leq \arg \int_{X} e^{-\ii k\omega}\ch(L^{k}), 
\end{equation}
imply the dHYM positivity condition for $(X,  [\omega], c_1(L^{\vee}))$, 
\begin{equation*} 
\int_V \Rea(\ii \omega + c_1(L^{\vee}))^{\dim V} - \cot(\varphi)\Imm(\ii \omega +  c_1(L^{\vee}))^{\dim V} > 0.
\end{equation*}
By Theorems \ref{LYZThm} and \ref{dHYMThm}, in this case the SYZ Lagrangian section $\cL(L^k, h)$ is Hamiltonian isotopic to a special SYZ Lagrangian section, i.e. there exists $\tilde{h}$ on the fibres of $L^k$ such that 
\begin{equation*}
\Imm e^{-\ii \hat{\theta}}\Omega|_{\cL(L^k, \tilde{h})} = 0.
\end{equation*}
\end{cor}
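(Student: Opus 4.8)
The plan is to upgrade the dHYM-\emph{semi}positivity already supplied by Proposition \ref{SVProp} to strict dHYM-positivity using the genericity hypothesis, and then to quote the existence criterion (Theorem \ref{dHYMThm}) together with the Leung-Yau-Zaslow correspondence (Theorem \ref{LYZThm}). First I would invoke Proposition \ref{SVProp}: under the running hypotheses there (in particular the supercritical phase constraint $\hat{\theta} \in \left(\frac{n-2}{2}\pi, \frac{n}{2}\pi\right)$, equivalently $\varphi \in (0,\pi)$), for $k$ sufficiently large the inequalities \eqref{SVIneq}, taken one per irreducible toric subvariety $V$, imply the semipositivity inequality \eqref{dHYMSemiPos} for that same $V$. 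The key point is that this conclusion is phrased in terms of the unscaled data $([\omega], c_1(L^{\vee}))$ and is therefore independent of $k$; this matches the genericity hypothesis, which is also stated at scale $\omega$, and is consistent with the scale invariance of dHYM positivity recorded in Remark \ref{ScaleInvarianceRmk}.

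The genericity upgrade is then elementary. By hypothesis $[\omega]$ does not lie on any of the finitely many analytic subvarieties of $H^{1,1}(X,\R)$ cut out by setting the defining integral equal to zero, one for each irreducible toric subvariety $V$. Hence for every such $V$ the quantity $\int_V \Rea(\ii \omega + c_1(L^{\vee}))^{\dim V} - \cot(\varphi)\Imm(\ii \omega + c_1(L^{\vee}))^{\dim V}$ is nonzero; combined with the semipositivity $\geq 0$ from the previous step, it is strictly positive. This is the only place the genericity assumption enters: a closed condition ($\geq 0$) together with the exclusion of its vanishing locus ($=0$) yields the open condition ($>0$).

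Finally I would deduce the full dHYM-positivity \eqref{dHYMPosIntro} and conclude. Theorem \ref{dHYMThm} requires the strict inequality for \emph{all} proper irreducible subvarieties, whereas the steps above only control the toric ones, so the bridge is the reduction—available in the toric setting for torus-invariant classes, and used in \cite{J_toricThomasYau}—that dHYM-positivity may be tested on toric subvarieties alone. With \eqref{dHYMPosIntro} in hand, Theorem \ref{dHYMThm} produces the unique solution of the constant Lagrangian phase equation \eqref{LagPhaseEqu}, hence a dHYM connection in the class $c_1((L^k)^{\vee})$ at K\"ahler scale $k\omega$ (scale invariance ensuring positivity persists at this scale). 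Theorem \ref{LYZThm} then converts this dHYM connection into the asserted metric $\tilde{h}$ on $L^k$ with $\Imm e^{-\ii \hat{\theta}}\Omega|_{\cL(L^k, \tilde{h})} = 0$, and the passage from $h$ to $\tilde{h}$ is realised by the special Hamiltonian isotopies recalled in the Introduction.

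I expect the main obstacle to be the last paragraph rather than the (routine) genericity argument: matching the toric positivity coming from Proposition \ref{SVProp} against the all-subvarieties hypothesis of Theorem \ref{dHYMThm} via the toric reduction, and—relatedly—ensuring the existence criterion is applied in the correct variable, namely solving dHYM by varying the metric on $(L^k)^{\vee}$ rather than the K\"ahler representative. The latter relies on the symmetry of the dHYM equation and of the positivity condition \eqref{dHYMPosIntro} under interchanging the roles of $[\omega]$ and the $(1,1)$-class $c_1(L^{\vee})$.
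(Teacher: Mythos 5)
Your proposal is correct and follows essentially the same route as the paper, which presents Corollary \ref{SVCor} as an immediate consequence of Proposition \ref{SVProp}: the semipositivity \eqref{dHYMSemiPos} plus the genericity hypothesis (excluding the finitely many vanishing loci) upgrades each toric inequality to strict positivity, after which Theorems \ref{dHYMThm} and \ref{LYZThm} yield the metric $\tilde{h}$. The two subtleties you flag at the end --- reducing the all-subvarieties hypothesis of Theorem \ref{dHYMThm} to toric subvarieties, and solving the equation in the class $c_1((L^k)^{\vee})$ at the scale $k\omega$ --- are precisely the points the paper delegates to \cite{J_toricThomasYau} (Remark 5.5) and to the scale invariance recorded in Remark \ref{ScaleInvarianceRmk}, so your treatment matches the intended argument.
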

In view of Proposition \ref{SVProp} and Corollary \ref{SVCor}, our task is to show that, for sufficiently large $k>0$, the inequalities \ref{SVIneq} are implied by a suitable Bridgeland semistability condition for a shift of $L^k \in D^b(X)$.
\subsection{Some stability conditions on toric threefolds}\label{BMSZSec}
Constructing stability conditions on threefolds is notoriously difficult. In the toric case, under certain restrictive assumptions, a result of Bernardara-Macr\`i-Schmidt-Zhao allows to construct stability conditions with the required central charge for our applications.

Recall that the abelian category $\Coh^{\beta}(X)$ is defined as the tilt of $\Coh(X)$ with respect to the torsion pair
\begin{align*}
&T_{\alpha, \beta} = \{E \in \Coh(X)\!: \mu_{\alpha,\beta; \min}(E) > 0\},\\
&F_{\alpha, \beta} = \{E \in \Coh(X)\!: \mu_{\alpha,\beta; \max}(E) \leq 0\},
\end{align*} 
where $\mu_{\omega,\beta; \min}$, $\mu_{\alpha,\beta; \max}$ denote the minimal and maximal slopes of the Harder-Narasimhan factors of $E$ with respect to the twisted Mumford-Takemoto slope function, given by
\begin{equation*}
\mu_{\omega, \beta}(E) = \frac{\omega^2 \cdot \ch^{\beta}_1(E)}{\omega^3 \cdot \ch^{\beta}_0(E)}, 
\end{equation*}
writing $\ch^{\beta} := \ch \cdot e^{-\beta\omega}$ for the twisted Chern character. That is, we have, by definition, 
\begin{equation*}
\Coh^{\beta}(X) := \langle F_{\alpha, \beta}[1], T_{\alpha, \beta} \rangle,
\end{equation*}
where the right hand side denotes the extension closure of the set of objects $\{ F_{\alpha, \beta}[1], T_{\alpha, \beta} \}$ in $D^b(X)$ (see e.g. \cite{BayerMacriToda_Bogomolov}, Section 3.1).

Introduce a slope function $\nu^{\omega}_{\alpha, \beta}$ on the abelian category $\Coh^{\beta}(X)$ by
\begin{equation*}
\nu^{\omega}_{\alpha, \beta}(E) = \frac{\omega \cdot \ch^{\beta}_2(E) - \frac{\alpha^2}{2} \omega^3\cdot \ch^{\beta}_0(E)}{\omega^2 \cdot \ch^{\beta}_1(E)}.
\end{equation*}

Following \cite{BayerMacriToda_Bogomolov}, Definition 3.2.3, an object $E \in \Coh^{\beta}(X)$ is called $\nu^{\omega}_{\alpha, \beta}$-\emph{(semi)stable} if for any nonzero proper subobject $F \subset E$ we have
\begin{equation*}
\nu^{\omega}_{\alpha, \beta}(E) < (\leq) \nu^{\omega}_{\alpha, \beta}(E/F).
\end{equation*}
\begin{thm}[\cite{BernardaraMacri_threefolds}, Theorem 5.1]\label{ToricBStabThm} Let $X$ be a smooth projective toric threefold (not necessarily Fano), with an ample divisor $H$ such that, for all effective divisors $D$, we have
\begin{enumerate}
\item[$(i)$] $H \cdot D^2 \geq 0$,  
\item[$(ii)$] if $H \cdot D^2 = 0$, then $D$ lies on an extremal ray of the effective cone.
\end{enumerate}
Then, any $\nu_{\alpha, \beta}$-stable object $E \in \Coh^{\beta}(X)$ with $\nu_{\alpha, \beta}(E) = 0$ satisfies the Bogomolov-Gieseker type inequality
\begin{equation}\label{BGineq}
\ch^{\beta}_3(E) \leq \frac{\alpha^2}{6} H^2 \cdot \ch^{\beta}_1(E).
\end{equation}
\end{thm}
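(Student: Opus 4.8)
The plan is to identify the stated inequality as the specialization at $\nu_{\alpha,\beta}(E)=0$ of the Bayer--Macr\`i--Toda quadratic inequality, and then to prove the latter by the standard tilt-stability machinery, with the toric hypotheses $(i)$, $(ii)$ entering only in the base case as an intersection-theoretic positivity. First I would set
\[
\overline{\Delta}_H(E) = (H^2\cdot\ch^\beta_1(E))^2 - 2\,(H^3\cdot\ch^\beta_0(E))(H\cdot\ch^\beta_2(E)),
\]
\[
Q_{\alpha,\beta}(E) = \alpha^2\,\overline{\Delta}_H(E) + 4\,(H\cdot\ch^\beta_2(E))^2 - 6\,(H^2\cdot\ch^\beta_1(E))\,\ch^\beta_3(E),
\]
and observe, by a direct computation, that the relation $\nu_{\alpha,\beta}(E)=0$, i.e. $H\cdot\ch^\beta_2(E)=\tfrac{\alpha^2}{2}H^3\cdot\ch^\beta_0(E)$, collapses the first two terms of $Q_{\alpha,\beta}(E)$ to $\alpha^2(H^2\cdot\ch^\beta_1(E))^2$. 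Since the denominator $H^2\cdot\ch^\beta_1(E)$ of $\nu_{\alpha,\beta}$ is strictly positive for an object whose slope $\nu_{\alpha,\beta}(E)=0$ is finite, dividing by it shows that $Q_{\alpha,\beta}(E)\ge 0$ is exactly equivalent to the asserted inequality \eqref{BGineq}. Hence it suffices to prove $Q_{\alpha,\beta}(E)\ge 0$ for every $\nu_{\alpha,\beta}$-stable object, and in fact I would aim for the stronger uniform statement over all $(\alpha,\beta)$.

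The two standard inputs I would invoke are, first, the tilt Bogomolov--Gieseker inequality $\overline{\Delta}_H(E)\ge 0$ for all $\nu_{\alpha,\beta}$-semistable objects (a consequence of the classical Bogomolov inequality, and already available), and second, tensoring by $\olo_X(mH)$ together with continuity in $\beta$, which reduces the problem to $\beta$ in a bounded rational range. With these in hand I would run the wall-crossing argument: for fixed invariants the region of tilt-stability in the $(\alpha,\beta)$-plane has a locally finite wall-and-chamber structure, and $Q_{\alpha,\beta}$ enjoys the formal property that, on a numerical wall where $E$ fits into $0\to A\to E\to B\to 0$ in $\Coh^\beta(X)$ with $\nu_{\alpha,\beta}(A)=\nu_{\alpha,\beta}(B)=\nu_{\alpha,\beta}(E)$, nonnegativity of $Q$ for $A$ and $B$ forces it for $E$. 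Because the Jordan--H\"older factors on a wall have strictly smaller $\overline{\Delta}_H$, a Noetherian induction on $\overline{\Delta}_H$ reduces everything to the base case of objects of minimal discriminant, together with the large-volume limit, where $\nu_{\alpha,\beta}$-stable objects are (shifts of) slope-stable sheaves.

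The base case is where the geometry of $X$ is used and, I expect, is the main obstacle. For minimal-discriminant objects the leading term $\alpha^2\overline{\Delta}_H$ no longer dominates, and $Q_{\alpha,\beta}(E)\ge 0$ becomes an explicit inequality among the intersection numbers of $H$ with the Chern classes of $E$; for the relevant generators (twists of line bundles and of structure sheaves of torus-invariant divisors and curves) these reduce to quantities of the shape $H\cdot D^2$ for effective $D$. Condition $(i)$, $H\cdot D^2\ge 0$, is precisely what supplies the needed nonnegativity, so that the base-case inequalities hold. The genuinely delicate point is the equality locus $H\cdot D^2=0$, where positivity is lost and the induction gains no room; this is exactly the situation controlled by condition $(ii)$, which forces such $D$ to lie on an extremal ray of the effective cone. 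There are then only finitely many such $D$, their numerical type is pinned down by the toric/extremal structure, and the inequality can be checked on them directly. Carrying out this finite extremal analysis, and verifying that the wall-crossing formalism applies uniformly down to the minimal-discriminant stratum, is the part of the argument that requires the most care.
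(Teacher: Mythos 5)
A preliminary remark: the paper does not prove this statement at all --- Theorem \ref{ToricBStabThm} is imported verbatim from Bernardara--Macr\`i--Schmidt--Zhao, so your attempt has to be measured against their argument rather than anything in this text. Your opening reduction is correct and is indeed the standard frame of that proof: with $Q_{\alpha,\beta}$ as you define it, the constraint $\nu_{\alpha,\beta}(E)=0$ does collapse $Q_{\alpha,\beta}(E)\geq 0$ to \eqref{BGineq} (the algebra checks out, and $H^2\cdot\ch^{\beta}_1(E)>0$ holds because otherwise $\nu_{\alpha,\beta}(E)=+\infty$); the inputs $\overline{\Delta}_H\geq 0$ for tilt-semistable objects, superadditivity of $\overline{\Delta}_H$ across walls, tensoring by $\olo_X(mH)$ to bound $\beta$, and the equivalence between \eqref{BGineq} on the $\nu=0$ locus and $Q_{\alpha,\beta}\geq 0$ everywhere are all genuine Bayer--Macr\`i--Stellari machinery on which the actual proof also rests.

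The genuine gap is your base case, and it is not a detail: it is the entire theorem. Two concrete failures. First, the induction on $\overline{\Delta}_H$ never reaches the dangerous objects. Since on a wall one only has $\overline{\Delta}_H(E)\geq\overline{\Delta}_H(A)+\overline{\Delta}_H(B)$, the induction bottoms out at objects that never become strictly semistable --- and under hypothesis $(i)$ every line bundle (up to shift) is $\nu_{\alpha,\beta}$-stable for \emph{all} $(\alpha,\beta)$, by the very result of Bayer--Macr\`i--Stellari quoted later in this paper. Such line bundles have arbitrarily large $\overline{\Delta}_H$ on a toric threefold of higher Picard rank, cross no wall, and so gain nothing from wall-crossing; for them \eqref{BGineq} must be proved directly, and it unwinds to exactly the intersection-theoretic statements where $H\cdot D^2\geq 0$ and the extremal-ray condition $(ii)$ carry the load. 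Your proposed terminal inputs cannot supply this: the large-volume limit is the wrong limit (slope stability of a sheaf imposes no bound whatsoever on $\ch_3$ --- that is precisely why the BMT inequality is a theorem and not a formality; the reduction that works goes the other way, to $\alpha\to 0$ with $\beta$ rational), and ``checking the inequality on K-theoretic generators'' such as structure sheaves of torus-invariant subvarieties is a category error, since stability is not additive and an inequality asserted only for stable objects cannot be verified on generators of $K_0$. Second, the litmus test your sketch fails: nothing in your skeleton uses $(i)$, $(ii)$ in a load-bearing way --- ``the inequality can be checked on them directly'' is a placeholder --- so the same skeleton would run unchanged on $X=\Bl_p\PP^3$ with $H=-K_X/2$, where the conclusion is false: Schmidt's counterexample, recalled in the paper immediately after the statement, is the pullback of $\olo_{\PP^3}(1)$, a line bundle that is tilt-stable at a point of the $\nu=0$ locus and violates \eqref{BGineq}. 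Any correct proof must isolate exactly where the hypotheses block this example; supplying that analysis at the $\alpha\to 0$ boundary is the content of the cited theorem, and your proposal leaves it open.
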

\begin{rmk} As the authors of \cite{BernardaraMacri_threefolds} observe, the result is invariant under scaling $H$, so we may replace $H$ with an ample rational class $\omega$ in our applications.
\end{rmk}
\begin{rmk} The authors of \cite{BernardaraMacri_threefolds} expect that condition $(ii)$ is not necessary. They observe that it is automatically satisfied for projective bundles over $\PP^1$. On the other hand, condition $(i)$ is quite restrictive and necessary. Indeed, a well-known counterexample to the Bogomolov-Gieseker type inequality \eqref{BGineq} (discovered in \cite{Schmidt_Bogomolov}) is given by $X = \Bl_p \PP^3$, $H = -\frac{K_X}{2}$, $E = \olo_{\PP^3}(1)$.
\end{rmk}

The general theory developed by Bayer-Macr\`i-Toda \cite{BayerMacriToda_Bogomolov} allows to construct stability conditions on any smooth projective threefold $X$, with Douglas type central charges, starting from the Bogomolov-Gieseker type inequality \eqref{BGineq}. 

Introduce the subcategories $\cT_{\alpha, \beta}$, $\cF_{\alpha, \beta}$ of $\Coh^{\beta}(X)$ given by 
\begin{align*}
&\cT_{\alpha, \beta} = \{E \in \Coh^{\beta}(X)\!: \nu_{\alpha,\beta; \min}(E) > 0\},\\
&\cF_{\alpha, \beta} = \{E \in \Coh^{\beta}(X)\!: \nu_{\alpha,\beta; \max}(E) \leq 0\},
\end{align*} 
where $\nu_{\omega,\beta; \min}$, $\nu_{\alpha,\beta; \max}$ denote the minimal and maximal slopes of the Harder-Narasimhan factors of $E$ with respect to the slope function $\nu_{\alpha,\beta}$ (these are well defined by \cite{BayerMacriToda_Bogomolov}, Lemma 3.2.4). Then, according to  \cite{BayerMacriToda_Bogomolov}, Section 3.2, $(\cT_{\alpha, \beta}, \cF_{\alpha, \beta})$ is a torsion pair on $\Coh^{\beta}(X)$, and we denote the corresponding tilt by 
\begin{equation*}
\cA_{\alpha, \beta} := \langle \cF_{\alpha, \beta}[1], \cT_{\alpha, \beta} \rangle.
\end{equation*}
This is the heart of a bounded t-structure on $D^b(X)$ and so in particular it is an abelian category. Let us also introduce a central charge on $D^b(X)$ given by 
\begin{align*}
Z_{\alpha, \beta}(E) := \left(-\ch^{\beta}_3(E) + \frac{3\alpha^2}{2} \omega^2 \cdot \ch^{\beta}_1(E)\right) + \ii \left(\omega\cdot \ch^{\beta}_2(E) - \frac{\alpha^2}{2}\omega^3 \cdot \ch^{\beta}_0(E)\right).
\end{align*} 
\begin{rmk} We will write $\nu^{\omega}_{\alpha,\beta}$, $\cF^{\omega}_{\alpha, \beta}$, $\cT^{\omega}_{\alpha, \beta}$, $Z^{\omega}_{\alpha, \beta}$ when it is necessary to emphasise the dependence on the polarisation $\omega$. 
\end{rmk}
One of the key results of \cite{BayerMacriToda_Bogomolov} can be summarised as follows.
\begin{thm}[\cite{BayerMacriToda_Bogomolov}, Corollary 5.2.4]\label{BMTThm} If the Bogomolov-Gieseker type inequality \eqref{BGineq} holds for $\nu_{\alpha, \beta}$-stable objects $E \in \Coh^{\beta}(X)$ with $\nu_{\alpha, \beta}(E) = 0$ on the smooth projective threefold $X$, then the pair
\begin{equation*}
\sigma_{\alpha, \beta} := \left(\cA_{\alpha, \beta}, Z_{\alpha, \beta}\right)
\end{equation*}
defines a Bridgeland stability condition on $D^b(X)$.
\end{thm}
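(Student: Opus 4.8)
The plan is to verify the two defining axioms of a Bridgeland stability condition for the pair $(\cA_{\alpha,\beta}, Z_{\alpha,\beta})$, the category $\cA_{\alpha,\beta}$ already being the heart of a bounded t-structure on $D^b(X)$ by the preceding discussion. First I would show that $Z_{\alpha,\beta}$ is a \emph{stability function} on $\cA_{\alpha,\beta}$, meaning $Z_{\alpha,\beta}(E) \in \mathbb{H} \cup \R_{<0}$ for every nonzero $E$, with $\mathbb{H}$ the open upper half-plane; then I would establish the Harder--Narasimhan property, and finally the support property needed for a full numerical stability condition.

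For the stability function, the imaginary part
\begin{equation*}
\Imm Z_{\alpha,\beta}(E) = \omega \cdot \ch^\beta_2(E) - \tfrac{\alpha^2}{2}\omega^3 \cdot \ch^\beta_0(E)
\end{equation*}
is exactly the numerator of the slope $\nu^\omega_{\alpha,\beta}$. Since every object of $\Coh^\beta(X)$ satisfies $\omega^2 \cdot \ch^\beta_1 \geq 0$, and $\cA_{\alpha,\beta} = \langle \cF_{\alpha,\beta}[1], \cT_{\alpha,\beta} \rangle$ is generated by objects of strictly positive $\nu$-slope and by shifts of objects of nonpositive $\nu$-slope, the tilt construction forces $\Imm Z_{\alpha,\beta} \geq 0$ on $\cA_{\alpha,\beta}$. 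The delicate point — and the step where the Bogomolov--Gieseker inequality \eqref{BGineq} is indispensable — is the boundary case $\Imm Z_{\alpha,\beta}(E) = 0$, where one must show $\Rea Z_{\alpha,\beta}(E) < 0$ for $E \neq 0$. Splitting $E$ via $0 \to H^{-1}(E)[1] \to E \to H^0(E) \to 0$ with $H^{-1}(E) \in \cF_{\alpha,\beta}$, $H^0(E) \in \cT_{\alpha,\beta}$, and passing to $\nu$-semistable factors, the vanishing of $\Imm Z$ forces $E$ to be an extension of (i) shifts $F[1]$ of $\nu_{\alpha,\beta}$-semistable $F$ of slope zero, necessarily with $\omega^2 \cdot \ch^\beta_1(F) > 0$, and (ii) $\nu_{\alpha,\beta}$-slope-zero objects of $\cT_{\alpha,\beta}$ with $\omega^2 \cdot \ch^\beta_1 = 0$.

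For the type (i) pieces one computes $\Rea Z_{\alpha,\beta}(F[1]) = \ch^\beta_3(F) - \tfrac{3\alpha^2}{2}\omega^2 \cdot \ch^\beta_1(F)$, and \eqref{BGineq} (for the polarisation $\omega$) gives $\ch^\beta_3(F) \leq \tfrac{\alpha^2}{6}\omega^2 \cdot \ch^\beta_1(F)$, so that $\Rea Z_{\alpha,\beta}(F[1]) \leq -\tfrac{4}{3}\alpha^2\, \omega^2 \cdot \ch^\beta_1(F) < 0$; this is exactly the inequality the theorem hypothesises, and it is the genuine content of the positivity. For the type (ii) pieces the conditions $\Imm Z = 0$ and $\omega^2 \cdot \ch^\beta_1 = 0$ force support in dimension zero, where $\ch^\beta_3 > 0$ and hence $\Rea Z_{\alpha,\beta} = -\ch^\beta_3 < 0$. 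Summing over the constituents yields $\Rea Z_{\alpha,\beta}(E) < 0$, completing the positivity; this boundary analysis is the main obstacle, being essentially equivalent to \eqref{BGineq}. It then remains to establish the Harder--Narasimhan property — for which I would argue, following Bridgeland, that $\cA_{\alpha,\beta}$ is Noetherian and that $\Imm Z_{\alpha,\beta}$ is bounded on subobjects, ruling out infinite chains of strictly increasing phase — and the support property, for which I would invoke the strengthened quadratic form of the Bogomolov--Gieseker inequality to bound $|Z_{\alpha,\beta}(E)|$ below by a fixed norm on the numerical Grothendieck group, uniformly over $\nu_{\alpha,\beta}$-semistable objects. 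These last two steps are routine relative to the positivity, so the weight of the theorem rests entirely on the boundary case and hence on \eqref{BGineq}.
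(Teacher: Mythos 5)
This theorem is not proved in the paper at all: it is quoted verbatim from \cite{BayerMacriToda_Bogomolov}, Corollary 5.2.4, so the only meaningful comparison is with the argument in that reference, which your sketch reconstructs faithfully and with correct arithmetic. In particular your treatment of the decisive boundary case is right: for a $\nu_{\alpha,\beta}$-semistable $F$ of slope zero the denominator $\omega^2\cdot\ch^{\beta}_1(F)$ is strictly positive, and \eqref{BGineq} yields $\Rea Z_{\alpha,\beta}(F[1]) \leq \bigl(\tfrac{1}{6}-\tfrac{3}{2}\bigr)\alpha^2\,\omega^2\cdot\ch^{\beta}_1(F) = -\tfrac{4}{3}\alpha^2\,\omega^2\cdot\ch^{\beta}_1(F) < 0$, which is precisely where the hypothesis enters and is indeed the whole content of the positivity.

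Three points are glossed and deserve a line each if you write this out. First, \eqref{BGineq} is assumed only for $\nu_{\alpha,\beta}$-\emph{stable} objects, so you should state explicitly that it extends to semistable ones by additivity of $\ch^{\beta}$ over Jordan--H\"older factors of the same slope (you allude to ``passing to semistable factors'' but never invoke this). Second, your type (ii) objects have $\nu_{\alpha,\beta} = +\infty$ (vanishing denominator), not ``slope zero'', and the claim that $\Imm Z_{\alpha,\beta} = 0$ together with $\omega^2\cdot\ch^{\beta}_1 = 0$ forces zero-dimensional support is not automatic: one must run the classical Bogomolov inequality plus the Hodge index theorem on the $H^{-1}$-part to kill any torsion-free contribution, and the same input is already needed earlier to know that $\Imm Z_{\alpha,\beta} \geq 0$ holds on all of $\cT_{\alpha,\beta}$, i.e.\ that infinite-slope objects have nonnegative numerator. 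Third, calling the remaining steps ``routine'' undersells them: the Harder--Narasimhan property uses Noetherianity of the double tilt together with discreteness of the image of $\Imm Z_{\alpha,\beta}$ (hence rationality of $\omega$ and $\beta$, which does hold in this paper's setting since $\omega$ is a Hodge class), and the support property was not fully settled in the original argument of \cite{BayerMacriToda_Bogomolov} --- it requires the quadratic-form strengthening of \eqref{BGineq} carried out later by Bayer--Macr\`i--Stellari, which you correctly gesture at but which is a genuine theorem rather than a routine verification. None of this affects the correctness of your outline, which is the standard and, as far as I know, only known route to the statement.
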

\begin{rmk} When $\frac{\alpha^2}{2} = \frac{1}{6}$, we obtain the central charge
\begin{equation*}
Z_{\omega, \beta}(E) = -\int_X e^{-\ii \omega}e^{-\beta \omega}\ch(E).
\end{equation*}
We denote the corresponding stability condition by 
\begin{equation*}
\sigma_{\omega, \beta} := \left(\cA_{\omega, \beta}, Z_{\omega, \beta}\right) =  \left(\langle \cF_{\omega, \beta}[1], \cT_{\omega, \beta} \rangle , Z_{\omega, \beta}\right)
\end{equation*} 
(when it is well defined). 
\end{rmk}
Thus, in the situation of Theorem \ref{ToricBStabThm}, we obtain stability conditions $\sigma_{\alpha, \beta}$, and in particular $\sigma_{\omega, \beta}$ (for $\frac{\alpha^2}{2} = \frac{1}{6}$) on the toric threefold $X$. 
\subsection{Auxiliary results and Theorem \ref{MainThm}}
Let us define the phase $e^{\ii \hat{\theta}_{\beta}}$ by 
\begin{equation*}
\int_X (\omega + \ii (c_1(L^{\vee}) + \beta \omega))^3 \in e^{\ii \hat{\theta}_{\beta}}\R_{>0}.
\end{equation*}
By direct computation, the real part of $\int_X (\omega + \ii (c_1(L^{\vee}) + \beta \omega))^3$
is given by 
\begin{equation*}
-\omega \cdot (c_1(L) - \beta \omega)^2 + \frac{\omega^3}{3} = R \cos(\hat{\theta}_{\beta}),\,R>0.
\end{equation*}
If $\cos(\hat{\theta}_{\beta}) < 0$, we can find a lift $\hat{\theta}_{\beta} \in \R$ lying in the supercritical interval $\hat{\theta}_{\beta} \in (\frac{\pi}{2}, \frac{3 \pi}{2})$. 
\begin{prop}\label{DivisorsProp} Let $(X, \omega)$ satisfy the assumptions of Theorem \ref{ToricBStabThm}. Suppose $L$ is a line bundle on $X$ with $\mu_{\omega,\beta}(L) < 0$ and satisfying the condition
\begin{equation*}
\cos(\hat{\theta}_{\beta}) < 0 \iff \omega \cdot (c_1(L) - \beta \omega)^2 - \frac{\omega^3}{3} > 0. 
\end{equation*}    
Then, for a uniform choice of the parameters $k \gg k_1 \gg 1$ appearing in the proof of Proposition \ref{SVProp}, for all irreducible toric \emph{divisors} $V \subset X$, there is an inclusion 
\begin{equation*}
\cS_V[1] \subset L^k[2]  
\end{equation*}
in the heart $\cA^{k\omega}_{\alpha, k\beta}$ of the Bridgeland stability condition 
\begin{equation*}
\sigma^{k\omega}_{\alpha, k\beta} = \left(\cA^{k\omega}_{\alpha, k\beta}, Z^{k\omega}_{\alpha, k\beta}\right)
\end{equation*}
for $0 < \alpha \leq \frac{1}{\sqrt{3}}$.
\end{prop}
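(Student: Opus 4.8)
The plan is to reduce the statement to placing two line bundles in the heart $\cA^{k\omega}_{\alpha,k\beta}$ and then to read off the inclusion from an exact triangle. Recall from the proof of Proposition \ref{SVProp} that, in the divisor case, $\cS_V$ is defined by the short exact sequence of sheaves
\begin{equation*}
0 \to L^k \xrightarrow{\ a\ } L^k(k_1 V) \to \cS_V \to 0,
\end{equation*}
and that the canonical morphism $\cS_V \to L^k[1]$ is the connecting map of the associated triangle. Rotating this triangle and shifting by one, the morphism $\cS_V[1] \to L^k[2]$ fits into a triangle
\begin{equation*}
\cS_V[1] \to L^k[2] \to L^k(k_1 V)[2] \to \cS_V[2],
\end{equation*}
so its cone is $L^k(k_1 V)[2]$ and its fibre is $\cS_V[1]$. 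Consequently, once I know that \emph{both} $L^k[2]$ and $L^k(k_1 V)[2]$ lie in $\cA^{k\omega}_{\alpha,k\beta}$, the long exact $\cA^{k\omega}_{\alpha,k\beta}$-cohomology sequence of this triangle forces $\cS_V[1]$ to be concentrated in degree $0$, hence $\cS_V[1]\in\cA^{k\omega}_{\alpha,k\beta}$, and realises it as the kernel of $a[2]$ in the abelian category $\cA^{k\omega}_{\alpha,k\beta}$. This is exactly the asserted inclusion, together with the short exact sequence $0\to\cS_V[1]\to L^k[2]\to L^k(k_1V)[2]\to 0$. Thus the whole proposition reduces to a single statement about line bundles.

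For a line bundle $N$ I would argue as follows. If $\mu_{k\omega,k\beta}(N)<0$, then $N$ is $\mu$-stable and lies in $F_{\alpha,k\beta}$, so $N[1]\in\Coh^{k\beta}(X)$; moreover $N[1]$ is $\nu_{\alpha,k\beta}$-semistable by tilt-stability of shifts of $\mu$-stable line bundles (\cite{BayerMacriToda_Bogomolov}). With $\tfrac{\alpha^2}{2}=\tfrac16$ the central charge is $Z^{k\omega}_{\alpha,k\beta}(\,\cdot\,)=-\int_X e^{-\ii k\omega}e^{-k\beta\omega}\ch(\,\cdot\,)$, and a direct phase computation identifies the hypothesis $\cos(\hat{\theta}_{\beta})<0$ (equivalently $\hat{\theta}_{\beta}\in(\tfrac{\pi}{2},\tfrac{3\pi}{2})$) with $\Imm Z^{k\omega}_{\alpha,k\beta}(L^k)>0$, using the bookkeeping that relates $Z$ to $\int_X(\omega+\ii(c_1(L^{\vee})+\beta\omega))^3$. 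Since the denominator of $\nu_{\alpha,k\beta}(N[1])$ is positive on $\Coh^{k\beta}(X)$ while its numerator is $-\Imm Z^{k\omega}_{\alpha,k\beta}(N)$, this gives $\nu_{\alpha,k\beta}(L^k[1])<0$, so $L^k[1]\in\cF_{\alpha,k\beta}$ and $L^k[2]\in\cA^{k\omega}_{\alpha,k\beta}$. The only $\alpha$-dependent term in $\Imm Z^{k\omega}_{\alpha,k\beta}(N)$ is $-\tfrac{\alpha^2}{2}(k\omega)^3\ch^{k\beta}_0(N)$ with $\ch_0(N)=1$, so $\Imm Z^{k\omega}_{\alpha,k\beta}$ only increases as $\alpha$ decreases; hence the conclusion persists for all $0<\alpha\leq 1/\sqrt{3}$, while $\mu<0$ is $\alpha$-independent.

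It then remains to run the same argument for the twisted bundle $N=L^k(k_1 V)$. Here I would treat the twist by $k_1 V$ as a perturbation: the leading terms of $\mu_{k\omega,k\beta}$ and of $\Imm Z^{k\omega}_{\alpha,k\beta}$ scale like $k^3$, whereas the corrections introduced by $k_1 V$ are of order $k^2 k_1$, controlled using $\omega^2\cdot[V]>0$ together with assumptions $(a)$, $(b)$ on $\omega$. Because the inequalities $\mu_{k\omega,k\beta}(L^k)<0$ and $\Imm Z^{k\omega}_{\alpha,k\beta}(L^k)>0$ are \emph{strict}, they are preserved for $L^k(k_1 V)$ once $k/k_1$ is sufficiently large, giving $L^k(k_1 V)[2]\in\cA^{k\omega}_{\alpha,k\beta}$. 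As there are only finitely many irreducible toric divisors $V$, the thresholds on $k\gg k_1\gg 1$ can be chosen uniformly by taking the worst case over this finite set, matching the uniform choice of parameters in Proposition \ref{SVProp}.

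I expect the main obstacle to be the perturbative step for the twisted bundle $L^k(k_1 V)$: one must check, uniformly in $V$, both that the twist does not spoil $\mu<0$ (so $L^k(k_1V)$ stays in $F_{\alpha,k\beta}$) and that the supercritical sign $\Imm Z>0$ survives, while also invoking the tilt-semistability of the shifted line bundle so that $\nu_{\max}$ coincides with the computed slope. The conceptual crux — identifying the cone of $\cS_V[1]\to L^k[2]$ with $L^k(k_1V)[2]$, so that the inclusion is \emph{automatic} from having two line bundles in the heart — is elementary once the triangle is set up correctly, and it is this reduction that makes the estimates above the only genuine work.
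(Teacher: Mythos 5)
Your reduction contains a genuine gap at the step you call ``automatic''. From the triangle $\cS_V[1] \to L^k[2] \to L^k(k_1 V)[2] \to \cS_V[2]$, knowing only that $L^k[2]$ and $L^k(k_1V)[2]$ lie in the heart $\cA^{k\omega}_{\alpha,k\beta}$ does \emph{not} force $\cS_V[1]$ to be concentrated in degree $0$. The long exact sequence of $\cA^{k\omega}_{\alpha,k\beta}$-cohomology gives $H^0(\cS_V[1]) = \ker$ and $H^1(\cS_V[1]) = \operatorname{coker}$ of the map $L^k[2] \to L^k(k_1V)[2]$ computed in $\cA^{k\omega}_{\alpha,k\beta}$; the fibre of a morphism between heart objects lies in the heart if and only if that morphism is an \emph{epimorphism} in the heart, and nothing in your argument shows the cokernel vanishes. (Compare the elementary example in $\Coh(X)$: a section $\olo \to \olo(D)$ is injective with nonzero cokernel $\olo_D(D)$, so its fibre is $\olo_D(D)[-1]$, not in the heart.) Establishing that this cokernel vanishes is exactly equivalent to the statement $\cS_V[1] \in \cA^{k\omega}_{\alpha,k\beta}$ that you set out to prove, so as written the reduction is circular: the proposition does not reduce to ``a single statement about line bundles''.

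The paper closes precisely this gap by descending to the intermediate heart $\Coh^{k\beta}(X)$, where the problem disappears: since $\cS_V$ is a torsion sheaf and $\mu_{k\omega,k\beta} < 0$ places $L^k[1]$ and $L^k(k_VV)[1]$ in $\Coh^{k\beta}(X)$, the triangle $\cS_V \to L^k[1] \to L^k(k_VV)[1]$ is an honest short exact sequence \emph{there}, exhibiting $\cS_V$ as a subobject of $L^k[1]$. Then one uses that $\cF^{k\omega}_{\alpha,k\beta}$ is closed under subobjects (the characterisation via $\nu^{k\omega}_{\alpha,k\beta}(S) \le 0$ for all subobjects $S$, cf.\ \cite{Schmidt_blowups}, Section 2): $L^k[1] \in \cF^{k\omega}_{\alpha,k\beta}$ forces $\cS_V \in \cF^{k\omega}_{\alpha,k\beta}$, hence $\cS_V[1] \in \cA^{k\omega}_{\alpha,k\beta}$, and only \emph{then} is the shifted triangle, having all three vertices in the heart, automatically a short exact sequence in $\cA^{k\omega}_{\alpha,k\beta}$. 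Your remaining steps match the paper's and are essentially sound: the identification of the numerator of $\nu$ with $\pm\Imm Z$ and of the hypothesis $\cos(\hat\theta_\beta) < 0$ with its sign, the monotonicity in $\alpha$, the $O(k^2k_1)$ perturbation for the twist, and the uniform choice over the finitely many toric divisors. One attribution to fix, though: the tilt-(semi)stability of the shifted line bundles is \emph{not} a general fact from \cite{BayerMacriToda_Bogomolov}; it is \cite{BayerMacriStellari_3folds}, Corollary 3.11~$(b)$, and it is exactly here -- not in the perturbation estimates -- that the hypothesis $\omega \cdot D^2 \ge 0$ for all effective divisors $D$ from Theorem \ref{ToricBStabThm} is consumed. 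Without it, tilt-stability of (shifts of) line bundles on threefolds can fail, so the equality $\nu_{\max}(L^k[1]) = \nu(L^k[1])$ that you invoke would be unjustified.
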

\begin{proof} Let $k > 0$ denote a large parameter to be determined. We assume that $L$ has negative twisted slope 
\begin{equation*}
\mu_{\omega, \beta}(L) < 0. 
\end{equation*}
So, by definition, we have $L^k[1] \in \Coh^{k\beta}(X)$ for all $k > 0$.

Fix a toric divisor  $V \subset X$. Note that, for $k \gg k_V \gg 1$, 
\begin{equation*}
\mu_{k\omega, k\beta}(L^k(k_V V)) =  \mu_{\omega, \beta}(L) + k^{-1}\mu_{\omega}(\olo(k_V V)) < 0,
\end{equation*} 
so we also have $L^k(k_V V)[1] \in \Coh^{k\beta}(X)$.

As we recalled, the object $\cS_V \in D^b(X)$ associated with $V$ is a two-dimensional torsion sheaf, defined by the short exact sequence 
\begin{equation*}
0 \to L^k \to L^k(k_V V) \to \cS_V \to 0,
\end{equation*} 
where $k \gg k_V \gg 1$. This implies that $\cS_V$ fits in the exact triangle  
\begin{equation*}
\cS_V \to L^k[1] \to L^k(k_V V)[1] \to \cS_V[1]. 
\end{equation*} 
Since $\cS_V$ is torsion, we also have $\cS_V \in \Coh^{k\beta}(X)$. Thus, the above exact triangle is induced by a short exact sequence in $\Coh^{k\beta}(X)$,
\begin{equation*}
\cS_V \to L^k[1] \to L^k(k_V V)[1].   
\end{equation*} 
Therefore, subobjects of $\cS_V$ in $\Coh^{k\beta}(X)$ are identified with subobjects of $L^k[1]$ in $\Coh^{k\beta}(X)$.  
Suppose we fix our conditions so that 
\begin{equation*}
L^k[1] \in \cF^{k\omega}_{\alpha, k\beta}. 
\end{equation*}
Then, for all subobjects $S \subset L^k[1]$ in $\Coh^{k\beta}(X)$, we have $\nu^{k\omega}_{\alpha, k\beta}(S) \leq 0$. So the same is true for subobjects $S \subset \cS_V$, and, by an equivalent characterisation of $\cF^{k\omega}_{\alpha, k\beta}$ (see e.g. \cite{Schmidt_blowups}, Section 2), $\cS_V$ also lies in $\cF^{k\omega}_{\alpha, k\beta}$. By definition, it follows that $L^k[2]$, $\cS_V[1]$ lie in the heart $\cA^{k\omega}_{\alpha, k\beta}$ of the stability condition $\sigma^{k\omega}_{\alpha, k\beta}$.

Let us recall a key fact.
\begin{lemma}[\cite{BayerMacriStellari_3folds}, Corollary 3.11 $(b)$] Suppose that, for all effective divisors $D$, we have $\omega\cdot D^2 \geq 0$. Then, if $L$ is any line bundle, either $L$ or $L[1]$ must be a $\nu_{\alpha, \beta}$-stable object of $\Coh^{\beta}(X)$, for all $\beta$ and all $\alpha > 0$.
\end{lemma}
Applying this in our case, we see that $L^k[1]$ is in fact a $\nu^{k\omega}_{\alpha, k\beta}$-stable object of $\Coh^{k\beta}(X)$, so we have
\begin{equation*}
\nu^{k\omega}_{\alpha, k\beta; \min}(L^k[1]) = \nu^{k\omega}_{\alpha, k\beta}(L^k[1]). 
\end{equation*}
It follows that
\begin{align*}
& L^k[1] \in \cF^{k\omega}_{\alpha, k\beta} \iff \nu^{k\omega}_{\alpha, k\beta}(L^k[1]) \leq 0.
\end{align*} 
By definition, 
\begin{equation*}
\nu^{\omega}_{\alpha, \beta}(E) = \frac{\omega \cdot \ch^{\beta}_2(E) - \frac{\alpha^2}{2} \omega^3\cdot \ch^{\beta}_0(E)}{\omega^2 \cdot \ch^{\beta}_1(E)},
\end{equation*}
so we have
\begin{align*}
\nu^{k\omega}_{\alpha, k\beta}(L^k[1])  =  \nu^{k\omega}_{\alpha, k\beta}(L^k) = \nu^{\omega}_{\alpha, \beta}(L) = \frac{\omega \cdot \frac{1}{2}(c_1(L) - \beta \omega)^2 - \frac{\alpha^2}{2}\omega^3}{\omega^2 \cdot (c_1(L) - \beta\omega)}. 
\end{align*}
Since we are assuming $\mu_{\omega, \beta}(L) < 0$, the denominator is negative. By our condition $\cos(\hat{\theta}_{\beta}) < 0$, the numerator is positive for $0 < \alpha < \frac{1}{\sqrt{3}}$. Thus, with our assumptions, we have $\nu^{k\omega}_{\alpha, k\beta}(L^k[1]) < 0$ and so $L^k[1],\,S_V \in \cF^{k\omega}_{\alpha, k\beta}$, which implies
\begin{equation*}
L^k[2],\,S_V[1] \in \cA^{k\omega}_{\alpha, k\beta}
\end{equation*}
as required. Similarly, we compute
\begin{align*}
&\nu^{k\omega}_{\alpha, k\beta}(L^k(k_V V)[1]) = \nu^{k\omega}_{\alpha, k\beta}(L^k(k_V V))\\
&=\frac{k \omega \cdot \left(\frac{1}{2}(c_1(L^k) + c_1(\olo(k_V V)))^2 - k \beta \omega \cdot (c_1(L^k) + c_1(\olo(k_V V)) + k^2 \beta^2 \frac{\omega^2}{2}\right)}{k \omega^2 \cdot (c_1(L^k)-k\beta\omega + c_1(\olo(k_V V)))}\\
&-\frac{k^3\frac{\alpha^2}{2}\omega^3}{k \omega^2 \cdot (c_1(L^k)-k\beta\omega + c_1(\olo(k_V V)))}. 
\end{align*}
Since we are assuming $\mu_{\omega, \beta}(L) < 0$, the denominator is negative for  $k \gg k_V \gg 1$, while the leading order term in $k$ of the numerator is positive for $0 < \alpha < \frac{1}{\sqrt{3}}$, by our condition $\cos(\hat{\theta}_{\beta}) < 0$. 

Thus, for $k \gg k_V \gg 1$, depending on $V$, with our assumptions, we have 
\begin{equation*}
\nu^{k\omega}_{\alpha, k\beta}(L^k(k_V V)[1]) < 0,
\end{equation*}
which yields $L^k(k_V V)[1] \in \cF^{k\omega}_{\alpha, k\beta}$ and so $L^k(k_V V)[2] \in \cF^{k\omega}_{\alpha, k\beta}[1] \subset \cA^{k\omega}_{\alpha, k\beta}$. It follows that the exact triangle 
\begin{equation*}
\cS_V[1] \to L^k[2] \to L^k(k_V V)[2] \to \cS_V[2]   
\end{equation*}  
is induced by a short exact sequence
\begin{equation*}
\cS_V[1] \to L^k[2] \to L^k(k_V V)[2]     
\end{equation*}  
in $\cA^{k\omega}_{\alpha, k\beta}$. Finally note that we can choose $k \gg k_V \gg 1$ uniformly in $V$.
\end{proof}
\begin{prop}\label{codim2Prop} Let $(X, \omega)$ satisfy the assumptions of Theorem \ref{ToricBStabThm}. Suppose $L$ is a line bundle on $X$ with $\mu_{\omega,\beta}(L) < 0$ and satisfying the condition
\begin{equation*}
\cos(\hat{\theta}_{\beta}) < 0 \iff \omega \cdot (c_1(L) - \beta \omega)^2 - \frac{\omega^3}{3} > 0. 
\end{equation*}    
Then, for a uniform choice of the parameters $k, k_1, k_2$ appearing in the proof of Proposition \ref{SVProp}, for all toric \emph{curves} $V \subset X$, there is an inclusion 
\begin{equation*}
\cS_V \subset L^k[2] 
\end{equation*}
in the heart $\cA^{k\omega}_{\alpha, k\beta}$ of the Bridgeland stability condition 
\begin{equation*}
\sigma^{k\omega}_{\alpha, k\beta} = \left(\cA^{k\omega}_{\alpha, k\beta}, Z^{k\omega}_{\alpha, k\beta}\right)
\end{equation*}
for $0 < \alpha \leq \frac{1}{\sqrt{3}}$.
\end{prop}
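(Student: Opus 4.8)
The plan is to factor the canonical morphism $\cS_V \to L^k[2]$ through $\cS_{V_2}[1]$, exactly as in the construction recalled in the proof of Proposition \ref{SVProp}, and to exhibit both factors as monomorphisms in the heart $\cA^{k\omega}_{\alpha, k\beta}$. Concretely, it suffices to establish: (a) that the divisor-type objects $\cS_{V_2}[1]$ and $\cS_{V_1,V_2}[1]$ lie in $\cA^{k\omega}_{\alpha, k\beta}$, together with the inclusion $\cS_{V_2}[1] \subset L^k[2]$; (b) that the codimension-two object $\cS_V$ itself lies in $\cA^{k\omega}_{\alpha, k\beta}$; and (c) that the connecting morphism $\cS_V \to \cS_{V_2}[1]$ of the defining triangle is a monomorphism in the heart. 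Granting these, the composition $\cS_V \hookrightarrow \cS_{V_2}[1] \hookrightarrow L^k[2]$ is the required inclusion.

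For (a) I would re-run the argument of Proposition \ref{DivisorsProp}. The object $\cS_{V_2}$ is precisely the divisor-object attached to the toric divisor $V_2$ with parameter $k_2$, so Proposition \ref{DivisorsProp} applies verbatim and yields $\cS_{V_2}[1] \in \cA^{k\omega}_{\alpha, k\beta}$ together with the short exact sequence $\cS_{V_2}[1] \to L^k[2] \to L^k(k_2 V_2)[2]$ in $\cA^{k\omega}_{\alpha, k\beta}$, which realises $\cS_{V_2}[1]$ as a subobject of $L^k[2]$. The object $\cS_{V_1,V_2}$ is the divisor-object for $V_2$ with $L^k$ replaced by $L^k(k_1 V_1)$; since $\mu_{k\omega, k\beta}(L^k(k_1 V_1)) = \mu_{\omega,\beta}(L) + O(k^{-1}) < 0$ and the sign of the numerator of $\nu^{k\omega}_{\alpha, k\beta}$ is governed to leading order in $k$ by the same condition $\cos(\hat{\theta}_\beta) < 0$, the identical computation (invoking the Bayer-Macr\`i-Stellari stability lemma for the line bundle $L^k(k_1 V_1)$) gives $L^k(k_1 V_1)[1], L^k(k_1 V_1 + k_2 V_2)[1] \in \cF^{k\omega}_{\alpha, k\beta}$ and hence $\cS_{V_1,V_2}[1] \in \cA^{k\omega}_{\alpha, k\beta}$, provided $k \gg k_1$.

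The crucial point is (b), the identification of $\cS_V$. The map $\cS_{V_2} \to \cS_{V_1,V_2}$ defining it is induced by multiplication by the section $s_1$ cutting out $k_1 V_1$. Now $\cS_{V_2} = L^k(k_2 V_2)|_{k_2 V_2}$ is a line bundle on the locally complete intersection subscheme $k_2 V_2$, hence a pure sheaf of dimension two; since the distinct prime toric divisor $V_1$ does not contain $V_2$, the section $s_1$ does not vanish on any two-dimensional component of the support, so multiplication by $s_1$ is injective. Therefore $\cS_V = \operatorname{coker}(\cS_{V_2} \to \cS_{V_1,V_2})$ is a genuine sheaf, supported on the curve $V = V_1 \cap V_2$ and of dimension one. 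Any sheaf supported in dimension $\leq 1$ has $\ch_0 = 0$ and $\omega^2 \cdot \ch_1 = 0$, so its $\nu$-slope is $+\infty$; it therefore lies in $\cT^{k\omega}_{\alpha, k\beta}$, and hence in $\cA^{k\omega}_{\alpha, k\beta}$.

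With all three objects now in the heart, the defining triangle of $\cS_V$, rotated to
\begin{equation*}
\cS_V \to \cS_{V_2}[1] \to \cS_{V_1,V_2}[1] \to \cS_V[1],
\end{equation*}
is a triangle of objects of $\cA^{k\omega}_{\alpha, k\beta}$ and hence a short exact sequence in this abelian category; in particular the connecting morphism $\cS_V \to \cS_{V_2}[1]$ is a monomorphism, settling (c). Composing with $\cS_{V_2}[1] \hookrightarrow L^k[2]$ gives $\cS_V \subset L^k[2]$, and since there are only finitely many toric curves $V$ the parameters $k \gg k_1 \gg k_2 \gg 1$ may be fixed uniformly (they coincide with those of Proposition \ref{SVProp}). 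I expect the only genuinely delicate step to be the purity argument identifying $\cS_V$ with an honest one-dimensional sheaf concentrated in cohomological degree zero — equivalently, the surjectivity of $\cS_{V_2}[1] \to \cS_{V_1,V_2}[1]$ in the heart; the remaining ingredients are a mechanical transfer of the divisor-case computations.
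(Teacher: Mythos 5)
Your proposal is correct and follows essentially the same route as the paper: both place $\cS_{V_2}[1]$, $\cS_{V_1,V_2}[1]$ in the heart via the divisor-case argument of Proposition \ref{DivisorsProp}, identify $\cS_V$ as a one-dimensional torsion sheaf with $\nu^{k\omega}_{\alpha,k\beta} = +\infty$ lying in $\cT^{k\omega}_{\alpha,k\beta}$, and extract the monomorphism $\cS_V \hookrightarrow \cS_{V_2}[1]$ from the rotated triangle before composing with $\cS_{V_2}[1] \subset L^k[2]$. Your only addition is to spell out the purity argument (injectivity of multiplication by $s_1$ on the pure sheaf supported on $k_2 V_2$, using $V_2 \not\subset V_1$), a point the paper asserts without detail; this is a genuine and correct completion of the step, not a different method.
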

\begin{proof} Let $V \subset X$ be a toric $1$-dimensional submanifold. We write $V$ as an intersection of toric divisors, $V = V_1 \cap V_2$. Consider the corresponding complexes of line bundles
\begin{equation*}
\cS_{V_2} = [L^k \to L^k(k_2 V_2)],\,\cS_{V_1, V_2} = [L^k(k_1 V_1) \to L^k(k_1 V_1 + k_2 V_2)].
\end{equation*}
The cohomology of $\cS_{V_2}$, $\cS_{V_1, V_2}$ is supported in degree $0$ and given by torsion sheaves, so we have 
$\cS_{V_2},\, \cS_{V_1, V_2} \in \Coh^{\beta}(X)$.

By construction, the object $\cS_V$ corresponding to $V$ is given by   
\begin{equation*}
\cS_V \cong [\cS_{V_2} \to \cS_{V_1, V_2}].
\end{equation*} 
So $\cS_V$ is actually represented by a complex in $\Coh^{\beta}(X)$. Its cohomology is supported in degree $0$ and given by the cokernel of the morphism of sheaves
\begin{equation*}
L^k(k_1 V_1)|_{k_1 V_1} \to L^k(k_1 V_1 + k_2 V_2)|_{k_1 V_1}.
\end{equation*} 
Therefore, it is a torsion sheaf of dimension $1$, i.e. an element of $\Coh^{\leq 1}(X)$. It follows that $\cS_V \cong [\cS_{V_2} \to \cS_{V_1, V_2}]$ satisfies
\begin{equation*}
\nu^{k\omega}_{\alpha, k\beta}(\cS_V) = + \infty
\end{equation*}
(see \cite{BayerMacriToda_Bogomolov}, Remark 3.2.2) and so $\cS_V \in \cT^{k\omega}_{\alpha, k\beta} \subset \cA^{k\omega}_{\alpha, k\beta}$.

The exact triangle defining $\cS_V$
\begin{equation*}
\cS_{V_2} \to \cS_{V_1, V_2} \to \cS_V \to \cS_{V_2}[1]
\end{equation*}
induces an exact triangle
\begin{equation*}
\cS_V \to \cS_{V_2}[1] \to \cS_{V_1, V_2}[1] \to \cS_V[1].
\end{equation*}
By the same argument as in the proof of Proposition \ref{DivisorsProp}, we have 
\begin{equation*}
\cS_{V_2}[1],\,\cS_{V_1, V_2}[1] \in \cF^{k\omega}_{\alpha, k\beta}[1] \subset \cA^{k\omega}_{\alpha, k\beta},
\end{equation*} 
so the latter triangle must be induced by an exact sequence 
\begin{equation*}
\cS_V \to \cS_{V_2}[1] \to \cS_{V_1, V_2}[1] 
\end{equation*}
in $\cA^{k\omega}_{\alpha, k\beta}$, and there is an injection $\cS_V \subset \cS_{V_2}[1]$ in $\cA^{k\omega}_{\alpha, k\beta}$. Composing this with the injection $\cS_{V_2}[1] \subset L^k[2]$ in $\cA^{k\omega}_{\alpha, k\beta}$ given by Proposition \ref{DivisorsProp} proves the claim.
\end{proof}
\begin{cor}\label{BStabdHYMCor} Let $(X, \omega)$ satisfy the assumptions of Theorem \ref{ToricBStabThm}. Fix a line bundle  $L$ on $X$ with $\mu_{\omega, \beta}(L) < 0$ and satisfying the condition
\begin{equation*}
\cos(\hat{\theta}_{\beta}) < 0 \iff \omega \cdot (c_1(L) - \beta \omega)^2 - \frac{\omega^3}{3} > 0. 
\end{equation*}
Suppose that 
\begin{enumerate}
\item[$(i)$] $L^k[2]$ is \emph{semistable} with respect to the Bridgeland stability condition $\sigma^{k\omega}_{\alpha, k\beta}$ for sufficiently large $k > 0$ and $0 < \alpha \leq \frac{1}{\sqrt{3}}$ sufficiently close to $\frac{1}{\sqrt{3}}$,
\item[$(ii)$] the K\"ahler class $[\omega]$ is \emph{generic} in the sense that it lies in the complement of the union of analytic subvarieties    
\begin{equation*} 
\int_V \Rea(\ii \omega + \beta \omega + c_1(L^{\vee}))^{\dim V} - \cot(\varphi_{\beta})\Imm(\ii \omega + \beta\omega+ c_1(L^{\vee}))^{\dim V} =  0
\end{equation*}
(as $V$ ranges through irreducible toric subvarieties).
\end{enumerate}

Then, the dHYM equation with $B$-field given by $\beta \omega$, namely 
\begin{equation*}
\Imm e^{-\ii \hat{\theta}_{\beta}} (\omega + \ii (c^{h^{\vee}}_1(L^{\vee}) + \beta \omega))^n = 0
\end{equation*}
is solvable.
\end{cor}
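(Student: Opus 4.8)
The plan is to assemble the pieces already established in Propositions \ref{DivisorsProp} and \ref{codim2Prop} together with Corollary \ref{SVCor}, reducing the solvability of the dHYM equation to verifying the slope inequalities \eqref{SVIneq} for the objects $\cS_V$. First I would observe that, by Propositions \ref{DivisorsProp} and \ref{codim2Prop}, for a uniform choice of the parameters $k \gg k_1 \gg k_2 \gg 1$ and for $0 < \alpha \leq \tfrac{1}{\sqrt{3}}$, each object $\cS_V[\codim V - 1]$ is a subobject of $L^k[2]$ inside the heart $\cA^{k\omega}_{\alpha, k\beta}$. Concretely, for divisors we have the inclusion $\cS_V[1] \subset L^k[2]$ and for curves $\cS_V \subset L^k[2]$. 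These inclusions mean that the hypothesis of $\sigma^{k\omega}_{\alpha, k\beta}$-semistability of $L^k[2]$ in $(i)$ directly controls the phases of the $\cS_V$ relative to $L^k[2]$.

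Next I would translate semistability into the numerical inequalities on central charges. Since $L^k[2]$ is assumed $\sigma^{k\omega}_{\alpha, k\beta}$-semistable and each shifted $\cS_V$ injects into it in the heart $\cA^{k\omega}_{\alpha, k\beta}$, the definition of Bridgeland semistability forces the phase inequality $\arg Z^{k\omega}_{\alpha, k\beta}(\cS_V[\codim V - 1]) \leq \arg Z^{k\omega}_{\alpha, k\beta}(L^k[2])$. Unwinding the shifts (each $[1]$ multiplies the central charge by $-1$, i.e. rotates the phase by $\pi$) recasts this as
\begin{equation*}
\arg\left( (-1)^{\codim V}\int_{X} e^{-\ii k\omega} e^{-k\beta\omega}\ch(\cS_V)\right) \leq \arg \int_{X} e^{-\ii k\omega} e^{-k\beta\omega}\ch(L^{k}),
\end{equation*}
which are precisely the inequalities \eqref{SVIneq} (now with the $B$-field twist $e^{-k\beta\omega}$ built into the twisted Chern character). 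Here one should specialise to $\tfrac{\alpha^2}{2} = \tfrac{1}{6}$, i.e. $\alpha = \tfrac{1}{\sqrt{3}}$, so that $Z^{k\omega}_{\alpha, k\beta}$ becomes the Douglas-type central charge $-\int_X e^{-\ii k\omega}e^{-k\beta\omega}\ch(E)$ and the central charges match the ones appearing in Proposition \ref{SVProp} and Corollary \ref{SVCor}, which is the reason the statement requires $\alpha$ close to $\tfrac{1}{\sqrt{3}}$.

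Finally I would invoke Corollary \ref{SVCor}, applied to the line bundle $L$ twisted by the $B$-field $\beta\omega$ (equivalently, working with the class $c_1(L^{\vee}) - \beta\omega$ and the phase $\varphi_\beta$ determined by $\hat\theta_\beta$). Under the genericity hypothesis $(ii)$ on $[\omega]$, Corollary \ref{SVCor} states that the inequalities \eqref{SVIneq} upgrade from semipositivity to the strict dHYM \emph{positivity} condition for all proper irreducible toric subvarieties $V$. The dHYM-positivity condition is exactly the Nakai-Moishezon criterion of Theorem \ref{dHYMThm}, which (since our supercritical hypothesis $\cos(\hat\theta_\beta) < 0$ places $\hat\theta_\beta$ in the interval $(\tfrac{n-2}{2}\pi, \tfrac{n}{2}\pi)$) yields a solution of the constant Lagrangian phase equation and hence of the twisted dHYM equation
\begin{equation*}
\Imm e^{-\ii \hat{\theta}_{\beta}} (\omega + \ii (c^{h^{\vee}}_1(L^{\vee}) + \beta \omega))^n = 0,
\end{equation*}
as required. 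I expect the main subtlety to be bookkeeping rather than conceptual: one must check that the $B$-field twist $\beta\omega$ is carried consistently through the equivalences between twisted slopes, twisted Chern characters, and the genericity subvarieties, and that the uniform choice of parameters $k, k_1, k_2$ from Propositions \ref{DivisorsProp} and \ref{codim2Prop} is compatible with the (possibly larger) threshold $k$ demanded by Corollary \ref{SVCor}; taking $k$ larger than both thresholds resolves this. The one genuinely essential input is that semistability is tested against \emph{subobjects in the heart}, so the verification that the $\cS_V$ are honest subobjects (rather than merely receiving morphisms from $L^k[2]$) is what makes the phase inequalities available, and this is precisely what Propositions \ref{DivisorsProp} and \ref{codim2Prop} provide.
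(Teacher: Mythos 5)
Your proposal is correct and follows essentially the same route as the paper's own proof: the inclusions of Propositions \ref{DivisorsProp} and \ref{codim2Prop} plus semistability give the phase inequalities in the heart, these are identified (at, or in the paper's phrasing for all $\alpha \leq \tfrac{1}{\sqrt{3}}$ sufficiently close to, $\alpha = \tfrac{1}{\sqrt{3}}$, where $Z^{k\omega}_{\alpha,k\beta}$ is the twisted Douglas central charge) with the inequalities \eqref{SVIneq}, then Proposition \ref{SVProp} yields dHYM-semipositivity, genericity upgrades it to strict positivity as in Corollary \ref{SVCor}, and Theorem \ref{dHYMThm} applies since $\cos(\hat{\theta}_{\beta}) < 0$ allows a supercritical lift. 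The only blemish is the parenthetical ``working with the class $c_1(L^{\vee}) - \beta\omega$'', which should read $c_1(L^{\vee}) + \beta\omega$: the twist $\ch^{\beta} = \ch \cdot e^{-\beta\omega}$ replaces $c_1(L)$ by $c_1(L) - \beta\omega$, whose negative is $c_1(L^{\vee}) + \beta\omega$, consistent with the dHYM equation you correctly state elsewhere in the proposal.
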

\begin{proof} By our condition 
\begin{equation*}
\cos(\hat{\theta}_{\beta}) < 0 \iff \omega \cdot (c_1(L) - \beta \omega)^2 - \frac{\omega^3}{3} > 0
\end{equation*}
it is possible to fix a lift $\hat{\theta}_{\beta}$ of the topological phase $e^{\ii \hat{\theta}_{\beta}}$ lying in the supercritical interval $\hat{\theta}_{\beta} \in \left(\frac{\pi}{2}, \frac{3}{2}\pi\right)$. So the dHYM equation with $B$-field
\begin{equation*}
\Imm e^{-\ii \hat{\theta}_{\beta}} (\omega + \ii (c^{h^{\vee}}_1(L^{\vee})+\beta\omega))^n = 0
\end{equation*}
is implied by the supercritical constant Lagrangian phase equation
\begin{equation*} 
\sum^n_{i = 1} \operatorname{arccot}(\lambda_i) = \frac{3}{2}\pi - \hat{\theta}_{\beta},
\end{equation*}
where $\lambda_i$ denote the eigenvalues of $\omega^{-1} (c^{h^{\vee}}_1(L^{\vee}) + \beta \omega)$.

By Proposition \ref{DivisorsProp}, if $L^k[2]$ is semistable with respect to $\sigma^{k\omega}_{\alpha, k\beta}$, and $V \subset X$ is an irreducible toric divisor, we must have 
\begin{equation*}
\arg Z^{k\omega}_{\alpha, k\beta}(\cS_V[1]) \leq \arg Z^{k\omega}_{\alpha, k\beta}(L^k[2]). 
\end{equation*}
For all $0 < \alpha \leq \frac{1}{\sqrt{3}}$ sufficiently close to $\frac{1}{\sqrt{3}}$, uniformly in $V$, this is equivalent to the inequality
\begin{equation*} 
\arg\left( - \int_{X} e^{-\ii k\omega - \beta k\omega}\ch(\cS_V)\right) \leq \arg \int_{X} e^{-\ii k\omega- \beta k\omega}\ch(L^{k}). 
\end{equation*}
Similarly, by Proposition \ref{codim2Prop}, if $V \subset X$ is a toric curve, we must have 
\begin{equation*}
\arg Z^{k\omega}_{\alpha, k\beta}(\cS_V) \leq \arg Z^{k\omega}_{\alpha, k\beta}(L^k[2]),  
\end{equation*}
or equivalently
\begin{equation*} 
\arg\left(\int_{X} e^{-\ii k\omega- \beta k\omega}\ch(\cS_V)\right) \leq \arg \int_{X} e^{-\ii k\omega- \beta k\omega}\ch(L^{k}). 
\end{equation*}
Thus, the inequalities \eqref{SVIneq} hold for all irreducible toric subvarieties $V \subset X$. As in Proposition \ref{SVProp}, the above phase inequalities for irreducible toric subvarieties $V\subset X$ imply the dHYM-semipositivity condition  
\begin{equation*} 
\int_V \Rea(\ii \omega + \beta\omega + c_1(L^{\vee}))^{\dim V} - \cot(\varphi)\Imm(\ii \omega + \beta\omega + c_1(L^{\vee}))^{\dim V} \geq 0,
\end{equation*}
and, as in Corollary \ref{SVCor}, if $[\omega]$ is generic in the sense of $(ii)$ in the statement, strict positivity must hold. The claim now follows from Theorem \ref{dHYMThm}. 
\end{proof}

Toric homological mirror symmetry gives an equivalence  
\begin{equation*}
D^b(X) \cong \FS(\cY_{q_k}).
\end{equation*}
Through this equivalence, each stability condition $\sigma^{k\omega}_{\alpha, k\beta} = \left(\cA^{k\omega}_{\alpha, k\beta}, Z^{k\omega}_{\alpha, k\beta}\right)$ on $D^b(X)$ induces a stability condition on $\FS(\cY_{q_k})$. 

We denote by $\sigma^{\vee} = \left(Z^{\vee}, \cA^{\vee}\right)$ the stability condition on $\FS(\cY_{q_k})$ corresponding to $\sigma^{k\omega}_{1/\sqrt{3}, 0}$ for fixed large $k > 0$. The central charge satisfies
\begin{equation*}
Z^{\vee}(\cL(L^k, h)) := Z(L^k) = \int_X e^{-\ii k\omega} \ch(L^k). 
\end{equation*}
Through homological mirror symmetry, the objects $\cS_V \in D^b(X)$ appearing in Propositions \ref{DivisorsProp}, \ref{codim2Prop} correspond to objects $\cL_V \in \FS(\cY_{q_k})$. When $V \subset X$ is a toric divisor, there is an inclusion  
\begin{equation*}
\cL_V[1] \subset \cL(L^k, h)[2] 
\end{equation*}
in the heart $\cA^{\vee} \subset \FS(\cY_{q_k})$. Similarly, when $V \subset X$ is a toric curve, there is an inclusion
\begin{equation*}
\cL_V \subset \cL(L^k, h)[2]. 
\end{equation*}
\begin{thm}\label{MainThm} Let $(X, \omega)$ satisfy the assumptions of Theorem \ref{ToricBStabThm}. Fix a line bundle  $L$ on $X$ with $\mu_{\omega}(L) < 0$ and satisfying the condition
\begin{equation*}
\cos(\hat{\theta}) < 0 \iff \omega \cdot (c_1(L))^2 - \frac{\omega^3}{3} > 0. 
\end{equation*} 
Define the graded Lagrangian section
\begin{equation*}
\tilde{\cL} = \cL(L^k, h)[2] 
\end{equation*}
as a shift of the SYZ transform $\cL(L^k, h) \subset \cU_{q_k} \subset \cY_{q_k}$ for $k > 0$ sufficiently large (depending only on $(X, \omega)$ and $L$). Then 
\begin{enumerate}
\item[$(i)$] there are inclusions $\cL_V[\dim V - 1] \subset \tilde{\cL}$ in the heart $\cA^{\vee}$ of the Bridgeland stability condition $\sigma^{\vee}$ on $\FS(\cY_{q_k})$.  
\item[$(ii)$] If $[\omega] \in H^{1,1}(X, \R)$ is \emph{generic} in the sense of Corollary \ref{BStabdHYMCor}, $(ii)$ and $\tilde{\cL}$ is \emph{semistable} with respect to the stability condition $\sigma^{\vee}$ on $\FS(\cY_{q_k})$, then the inequalities 
\begin{equation*}
\arg Z^{\vee}\big(\cL_V[\dim V - 1]\big) \leq \arg Z^{\vee}\big(\tilde{\cL}\big)
\end{equation*}
hold, and $\tilde{\cL}$ is isomorphic in $\FS(\cY_{q_k})$ to a special Lagrangian SYZ section.
\item[$(iii)$] If $X$ is weak Fano, the central charges of $\tilde{\cL}$ and $\cL_V$ are given by periods of the holomorphic volume form, up to a small correction term 
\begin{align*}
&Z^{\vee}(\tilde{\cL}) = \frac{1}{(2\pi \ii)^{n}} \int_{[\tilde{\cL}]} e^{-W(k \omega_0)/z} \Omega_0\,(1 + O(k^{-1})),\\
&Z^{\vee}(\cL_V) = \frac{1}{(2\pi \ii)^{n}} \int_{[\cL_V]} e^{-W(k \omega_0)/z} \Omega_0\,(1 + O(k^{-1})).
\end{align*}
\item[$(iv)$] When $X$ is not necessarily weak Fano, there exist a Landau-Ginzburg potential $W_{q_k}$, \emph{complex} cycles $\Gamma_{\cL}$, $\Gamma_{\cL_V}$ with
\begin{equation*}
[\Gamma_{\cL}],\,[\Gamma_{\cL_V}] \in H_n(\cY_{q_k}, \{\Rea(W_{q_k}) \gg 0\}; \Z) \otimes \C,
\end{equation*}
and a holomorphic volume form $\Omega^{(k)}$, such that  
\begin{align*} 
& Z^{\vee}(\tilde{\cL}) = \frac{1}{(2\pi \ii)^{n}}\int_{\Gamma_{\cL}} e^{-W(k\omega_0)/z}\Omega^{(k)} (1 + O(k^{-1})),\\  
& Z^{\vee}(\cL_V) = \frac{1}{(2\pi \ii)^{n}}\int_{\Gamma_{\cL_V}} e^{-W(k\omega_0)/z}\Omega^{(k)} (1 + O(k^{-1})),
\end{align*}
where the integrals are understood in the sense of asymptotic expansions for $z \to 0^+$ (see e.g.  \cite{CoatesCortiIritani_hodge}, Section 6.20).
\end{enumerate}
\end{thm}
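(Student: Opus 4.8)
The plan is to assemble the four claims from the results already established in this section, transporting all algebro-geometric data on $D^b(X)$ to $\FS(\cY_{q_k})$ through the toric homological mirror symmetry equivalence $D^b(X) \cong \FS(\cY_{q_k})$, which by construction sends $\sigma^{k\omega}_{1/\sqrt{3},0}$ to $\sigma^{\vee}$, the heart $\cA^{k\omega}_{1/\sqrt{3},0}$ to $\cA^{\vee}$, and the objects $L^k[2]$, $\cS_V$ to $\tilde{\cL}$, $\cL_V$; I take this transport as given. For $(i)$ I would simply apply this equivalence to Propositions \ref{DivisorsProp} and \ref{codim2Prop}. With $\beta = 0$ the numerical hypothesis $\cos(\hat{\theta}) < 0$ is exactly the one required there, so those propositions furnish monomorphisms $\cS_V[1] \subset L^k[2]$ for toric divisors $V$ (where $\dim V - 1 = 1$) and $\cS_V \subset L^k[2]$ for toric curves $V$ (where $\dim V - 1 = 0$), all inside $\cA^{k\omega}_{1/\sqrt{3},0}$. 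Since the equivalence is exact and restricts to an equivalence of hearts, it carries these to the claimed monomorphisms $\cL_V[\dim V - 1] \subset \tilde{\cL}$ in $\cA^{\vee}$.

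For $(ii)$ I would combine the subobjects from $(i)$ with the assumed semistability of $\tilde{\cL}$ in $\cA^{\vee}$: by the definition of a Bridgeland stability condition this forces the phase inequalities $\arg Z^{\vee}(\cL_V[\dim V - 1]) \leq \arg Z^{\vee}(\tilde{\cL})$. Under the identifications $Z^{\vee}(\cL_V) = Z(\cS_V)$ and $Z^{\vee}(\tilde{\cL}) = Z(L^k)$, these become, by the computation carried out in the proof of Corollary \ref{BStabdHYMCor} (specialised to $\beta = 0$), exactly the inequalities \eqref{SVIneq} for every toric subvariety $V$. The genericity hypothesis then lets me upgrade dHYM-semipositivity to strict dHYM-positivity via Corollary \ref{SVCor}, so Theorems \ref{LYZThm} and \ref{dHYMThm} produce a metric $\tilde{h}$ with $\cL(L^k, \tilde{h})$ special Lagrangian. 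Finally, since $\cL(L^k, h)$ and $\cL(L^k, \tilde{h})$ differ by an admissible Hamiltonian isotopy, they are isomorphic in $\FS(\cY_{q_k})$, so $\tilde{\cL} \cong \cL(L^k, \tilde{h})[2]$ is represented by a shift of a special SYZ section, as required.

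Parts $(iii)$ and $(iv)$ are where the real work lies: the Douglas-type central charges $Z^{\vee}(\tilde{\cL}) = \int_X e^{-\ii k\omega}\ch(L^k)$ and $Z^{\vee}(\cL_V) = Z(\cS_V)$ must be matched with periods of the mirror. I would use the remark following Proposition \ref{SVProp}, which allows the parameters entering the $\cS_V$ to be fixed so that, in the weak Fano case, each $Z(\cS_V)$ stays within $O(k^{-1})$ of a period of $\Omega_0$; together with Fang's identification \cite{Fang_charges} of the central charge of $L^k$ with the oscillatory integral $\frac{1}{(2\pi\ii)^{n}}\int_{[\tilde{\cL}]} e^{-W_{q_k}/z}\Omega_0$, whose deviation from an honest period is governed by Iritani's Gamma theorem \cite{Iritani_gamma} (see Remark \ref{GammaRmk}), this would yield $(iii)$, with $[\tilde{\cL}]$ realised geometrically when $L^{\vee}$ is ample. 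For general toric $X$ I would retain the same representation after replacing $[\tilde{\cL}]$ by a complex cycle $\Gamma_{\cL}$ and $\Omega_0$ by a deformed volume form $\Omega^{(k)}$, invoking the mirror theorem and the Hodge-theoretic results of Coates-Corti-Iritani-Tseng \cite{CoatesCortiIritani_hodge} and interpreting the integrals through their asymptotic expansion as $z \to 0^+$, which gives $(iv)$.

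I expect the main obstacle to be precisely $(iii)$--$(iv)$: one must verify that a \emph{single} choice of the auxiliary parameters keeps the finitely many central charges $Z(\cS_V)$ simultaneously within $O(k^{-1})$ of periods, and carefully align the normalisations of Fang and of Coates-Corti-Iritani-Tseng with the Gamma-class correction. By contrast, the categorical content of $(i)$--$(ii)$ is essentially formal once the transport of $\sigma^{k\omega}_{1/\sqrt{3},0}$ to $\sigma^{\vee}$ through mirror symmetry is granted, and the analytic input of $(ii)$ is already packaged in Corollary \ref{BStabdHYMCor}.
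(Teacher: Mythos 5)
Your proposal is correct and follows essentially the same route as the paper: part $(i)$ is the transport of Propositions \ref{DivisorsProp} and \ref{codim2Prop} through homological mirror symmetry, part $(ii)$ combines semistability with Corollaries \ref{BStabdHYMCor} and \ref{SVCor} and Theorems \ref{LYZThm}, \ref{dHYMThm} exactly as in the paper (which additionally notes, citing \cite{YangLi_ThomasYau}, that the shift $[2]$ only changes the grading by $2\pi$, so the underlying Lagrangian is unchanged), and parts $(iii)$--$(iv)$ are, as in the paper, quoted from \cite{J_toricThomasYau} (Theorems 2.5 and 2.20) via Iritani's Gamma theorem and \cite{CoatesCortiIritani_hodge}, with the uniform choice of parameters for the $\cS_V$ handled by the remark following Proposition \ref{SVProp}. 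No gaps; your assessment that the categorical steps are formal once the transport of $\sigma^{k\omega}_{1/\sqrt{3},0}$ is granted matches the paper's actual proof.
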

\begin{proof} The inclusions $\cL_V[\dim V - 1] \subset \tilde{\cL}$ in $\cA^{\vee}$ claimed in $(i)$ have already been observed. If $\tilde{\cL}$ is $\sigma^{\vee}$-semistable they imply the inequalities
\begin{equation*}
\arg Z^{\vee}\big(\cL_V[\dim V - 1]\big) \leq \arg Z^{\vee}\big(\tilde{\cL}\big).
\end{equation*}
If $[\omega] \in H^{1,1}(X, \R)$ is \emph{generic} in the sense of Corollary \ref{BStabdHYMCor}, $(ii)$, these inequalities imply that the SYZ Lagrangian section $\cL(L^k, h)$ is Hamiltonian isotopic to a special Lagrangian, by Corollary \ref{SVCor}. The isomorphism class in $\FS(\cY_{q_k})$ of $\tilde{\cL} = \cL(L^k, h)[2]$ is represented by the same underlying Lagrangian as $\cL(L^k, h)$, with a shift of $2\pi$ in the grading, see \cite{YangLi_ThomasYau}, footnote 1 and Remark 2.6. Thus, $\tilde{\cL}$ is isomorphic in $\FS(\cY_{q_k})$ to a special Lagrangian SYZ section. This proves our claim $(ii)$.

The asymptotic expansions for the central charges in the weak Fano case claimed in $(iii)$ are proved in \cite{J_toricThomasYau}, Section 5.2 (proof of Theorem 2.5) as an application of Iritani's Gamma theorem \cite{Iritani_gamma}.

The weaker statement $(iv)$ concerning the general toric case is proved in \cite{J_toricThomasYau}, Theorem 2.20.
\end{proof}
\begin{rmk}\label{GammaRmk} It follows from the Gamma theorem that, when $X$ is weak Fano, the difference between $Z^{\vee}(\tilde{\cL})$ and a period of $\Omega$ is measured by the gamma class $\widehat{\Gamma}_X$ and the Givental function $I_X(q_k, - z)$, see \cite{J_toricThomasYau}, Section 5 for more details.
\end{rmk}
\section{Blowups}\label{BlpSec}
In this Section we will apply the results of Martinez-Schmidt on stability conditions on blowups in order to obtain more examples for which the conclusions of Theorem \ref{MainThmIntro} hold. The latter result is proved as Theorem \ref{BlpThm} at the end of the Section.

Let $(X, \omega)$ be a smooth projective toric threefold with a Hodge torus-invariant K\"ahler form $\omega$ satisfying the conditions of Theorem \ref{ToricBStabThm}, i.e. such that, for all effective divisors $D$, we have
\begin{enumerate}
\item[$(a)$] $\omega \cdot D^2 \geq 0$,
\item[$(b)$] if $\omega \cdot D^2 = 0$, then $D$ lies on an extremal ray of the effective cone.
\end{enumerate}
Fix a line bundle  $L$ on $X$ with $\mu_{\omega}(L) < 0$, satisfying the condition
\begin{equation*}
\omega \cdot (c_1(L))^2 - \frac{\omega^3}{3} > 0. 
\end{equation*}
Let $\pi\!:\tilde{X} := \Bl_{p}X \to X$ denote the blowup at a torus-fixed point, with exceptional divisor $E$. 

Following \cite{Schmidt_blowups}, Section 4, we define divisor and curve classes on $\tilde{X}$ by
\begin{equation*}
\tilde{\omega} = \pi^* \omega,\,\tilde{B} = 2E + \beta \tilde{\omega},\,\tilde{\Gamma} = -\frac{E^2}{6},
\end{equation*}  
where $\beta \in \R$. For $\alpha > 0$, we define a central charge
\begin{equation*}
Z^{\tilde{\Gamma}}_{\alpha, \beta} = \left(-\ch^{\tilde{B}}_3 + \frac{3\alpha^2}{2} \tilde{\omega}^2\cdot\ch^{\tilde{B}}_1 + \tilde{\Gamma}\cdot \ch^{\tilde{B}}_1\right) + \ii \left(\tilde{\omega}\cdot \ch^{\tilde{B}}_2 - \frac{\alpha^2}{2}\tilde{\omega}^3\cdot\ch^{\tilde{B}}_0\right).
\end{equation*} 
There is a sheaf of $\olo_X$-algebras  
\begin{equation*}
\cB = \pi_{*} \shEnd(\cE),\, \cE:= \olo_{\tilde{X}} \oplus \olo_{\tilde{X}}(E) \oplus \olo_{\tilde{X}}(2E) 
\end{equation*}
which yields an equivalence of $D^b(\tilde{X})$ with the derived category of the category of finitely generated $\cB$-modules, 
\begin{equation*}
\Phi\!: D^b(\tilde{X})  \xrightarrow{\cong} D^b(X, \cB),\,F \mapsto R\pi_* R\shHom(\cE, F) 
\end{equation*} 
see \cite{Schmidt_blowups}, Theorem 4.1. Let 
\begin{equation*}
j\!:  D^b(X, \cB) \to D^b(X)
\end{equation*} 
denote the forgetful functor. One can define a forgetful Chern character on $D^b(X, \cB)$ by
\begin{equation*}
\ch(F) = \ch(j(F)),\,F \in D^b(X, \cB).
\end{equation*}
Through this extension, we induce a Mumford-Takemoto slope $\mu_{\omega, \beta}$ on $\Coh(X, \cB)$, yielding a category $\Coh^{\beta}(X, \cB)$ obtained by tilting $\Coh(X, \cB)$ using the slope $\mu_{\omega, \beta}$ as in Section \ref{BMSZSec}; this is well-defined by \cite{Schmidt_blowups}, Lemma 4.4.

Similarly, the forgetful Chern character induces a slope function $\nu_{\alpha, \beta}$ on $\Coh^{\beta}(X, \cB)$, and so a tilt $\cA^{\omega}_{\alpha, \beta}(X, \cB)$ of $\Coh^{\beta}(X, \cB)$ induced by $\nu_{\alpha, \beta}$; this is well-defined by \cite{Schmidt_blowups}, Lemma 4.5. 

Set $\cA^{\omega}_{\alpha, \beta}(\tilde{X}) := \Phi^{-1}\left(\cA^{\omega}_{\alpha, \beta}(X, \cB)\right)$.
\begin{prop}\label{SchmidtProp} The pair  
\begin{equation*}
\sigma_{\alpha, \beta}(\tilde{X}) := \left(\cA_{\alpha, \beta}(\tilde{X}), Z^{\tilde{\Gamma}}_{\alpha, \beta}\right)
\end{equation*}
defines a Bridgeland stability condition on $D^b(\tilde{X})$.
\end{prop}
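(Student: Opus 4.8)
The plan is to reduce everything to the noncommutative side via the derived equivalence $\Phi\!: D^b(\tilde X) \xrightarrow{\cong} D^b(X, \cB)$. Bridgeland stability conditions are transported along triangulated equivalences, and by construction $\cA_{\alpha,\beta}(\tilde X) = \Phi^{-1}\left(\cA^{\omega}_{\alpha,\beta}(X,\cB)\right)$; so it suffices to prove that the pair $\left(\cA^{\omega}_{\alpha,\beta}(X,\cB), Z\right)$ is a Bridgeland stability condition on $D^b(X,\cB)$, where $Z$ is the central charge induced by $Z^{\tilde\Gamma}_{\alpha,\beta}$ through the forgetful Chern character $\ch(F) = \ch(j(F))$. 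First I would check that $Z$ is well defined on the numerical Grothendieck group of $D^b(X,\cB)$: this is formal, since $Z^{\tilde\Gamma}_{\alpha,\beta}$ is a polynomial in the twisted components $\ch^{\tilde B}_i$, which by definition factor through the forgetful Chern character and hence through $\Phi$.

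Next I would verify that the double-tilted heart $\cA^{\omega}_{\alpha,\beta}(X,\cB)$, obtained by tilting $\Coh(X,\cB)$ first with respect to the Mumford-Takemoto slope $\mu_{\omega,\beta}$ and then with respect to $\nu_{\alpha,\beta}$, together with $Z$, satisfies the defining properties of a stability condition. The two tilts are legitimate by \cite{Schmidt_blowups}, Lemmas 4.4 and 4.5, which provide Harder-Narasimhan filtrations for $\mu_{\omega,\beta}$ on $\Coh(X,\cB)$ and for $\nu_{\alpha,\beta}$ on $\Coh^{\beta}(X,\cB)$. The positivity of the central charge on the heart, namely that $Z(E) \in \mathbb{H} \cup \R_{<0}$ for every nonzero $E \in \cA^{\omega}_{\alpha,\beta}(X,\cB)$, follows from the same bookkeeping as in the commutative Bayer-Macr\`i-Toda construction: the imaginary part $\Imm Z^{\tilde\Gamma}_{\alpha,\beta}$ coincides with the numerator of the tilt-slope $\nu_{\alpha,\beta}$, so that the standard analysis of the double tilt shows it is nonnegative on the heart, with the objects on which it vanishes controlled by the real part. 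Existence of Harder-Narasimhan filtrations for $Z$ would then follow from noetherianity of the heart together with the general criterion behind Theorem \ref{BMTThm}.

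The main obstacle, and the reason the $\cB$-module apparatus and the correction term $\tilde\Gamma = -\frac{E^2}{6}$ are introduced in the first place, is the support property, which in the Bayer-Macr\`i-Toda framework reduces to a Bogomolov-Gieseker type inequality for $\nu_{\alpha,\beta}$-tilt-stable objects of tilt-slope zero. On $\tilde X$ the naive inequality genuinely fails: as recalled after Theorem \ref{ToricBStabThm}, the sheaf $\olo_{\PP^3}(1)$ on $\Bl_p\PP^3$ violates it. The purpose of passing to $(X,\cB)$ and of inserting the term $\tilde\Gamma\cdot\ch^{\tilde B}_1$ into the real part of $Z^{\tilde\Gamma}_{\alpha,\beta}$ is exactly to correct for this: once shifted by $\tilde\Gamma\cdot\ch^{\tilde B}_1$, the inequality does hold for tilt-semistable $\cB$-modules of slope zero. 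This is the content of Schmidt's main theorem in \cite{Schmidt_blowups}, Section 4, whose proof reduces the corrected inequality on $(X,\cB)$ to the Bogomolov-Gieseker inequality on the base $X$ itself. The crucial step in my plan is therefore to verify that the hypotheses of Schmidt's theorem hold in our setting: that $(X,\omega)$ is a smooth projective threefold whose ample class satisfies conditions $(a)$, $(b)$, so that the Bogomolov-Gieseker inequality on $X$ is available through Theorem \ref{ToricBStabThm}, and that $\tilde\omega = \pi^*\omega$, $\tilde B = 2E + \beta\tilde\omega$, $\tilde\Gamma = -\frac{E^2}{6}$ are precisely the classes to which his construction applies. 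Granting this, the support property follows from the quadratic form attached to the corrected inequality, and the proposition follows by transporting the resulting stability condition back along $\Phi^{-1}$.
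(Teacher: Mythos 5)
Your proposal is correct and follows essentially the same route as the paper: the paper's proof simply observes that Theorems \ref{ToricBStabThm} and \ref{BMTThm} make $\left(\cA_{\alpha,\beta}(X), Z_{\alpha,\beta}\right)$ a stability condition on $D^b(X)$, and then cites \cite{Schmidt_blowups}, Theorem 4.6 to transfer it to $\big(\cA_{\alpha,\beta}(\tilde{X}), Z^{\tilde{\Gamma}}_{\alpha,\beta}\big)$ on $D^b(\tilde{X})$. You additionally unpack the internals of Schmidt's theorem (the double tilt on $\Coh(X,\cB)$ via Lemmas 4.4--4.5, positivity, and the corrected Bogomolov--Gieseker inequality giving the support property), which is accurate but not a different argument, since the key step is the same appeal to Schmidt's blowup theorem with the same hypothesis verification.
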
 
\begin{proof} By Theorems \ref{ToricBStabThm} and \ref{BMTThm}, $\left(\cA_{\alpha, \beta}(X), Z_{\alpha, \beta}\right)$ is a stability condition on $D^b(X)$. According to \cite{Schmidt_blowups}, Theorem 4.6 this implies that $\big(\cA_{\alpha, \beta}(\tilde{X}), Z^{\tilde{\Gamma}}_{\alpha, \beta}\big)$ is a stability condition on $D^b(\tilde{X})$.
\end{proof}
We have an analogue of Proposition \ref{DivisorsProp} in the case of blowups.
\begin{prop}\label{BlpDivisorsProp} Let $(X, \omega)$ satisfy the assumptions of Theorem \ref{ToricBStabThm}. Let $L$ denote a line bundle satisfying $\mu_{\omega,\beta}(L) < 0$ and 
\begin{equation*}
\cos(\hat{\theta}_{\beta}) < 0 \iff \omega \cdot (c_1(L) - \beta \omega)^2 - \frac{\omega^3}{3} > 0. 
\end{equation*}    
Define 
\begin{equation*}
\tilde{L} = \pi^*L(\delta E),\, \delta \in \Q_{ > 0} 
\end{equation*}
as a $\Q$-line bundle on $\tilde{X} = \Bl_{p} X$.

Then, for a uniform choice of the parameters $k \gg k_1 \gg 1$ appearing in the proof of Proposition \ref{SVProp}, with $k$ sufficiently divisible, for all irreducible toric \emph{divisors} $\tilde{V} \subset \tilde{X}$, with $\tilde{V} \neq E$, there is an inclusion 
\begin{equation*}
\cS_{\tilde{V}}[1] \subset \tilde{L}^k[2]  
\end{equation*}
in the heart $\cA^{k\tilde{\omega}}_{\alpha, k\beta}(\tilde{X})$ of the Bridgeland stability condition 
\begin{equation*}
\sigma^{k\omega}_{\alpha, k\beta}(\tilde{X}) = \left(\cA^{k\tilde{\omega}}_{\alpha, k\beta}(\tilde{X}), Z^{k\omega, \tilde{\Gamma}}_{\alpha, k\beta}\right)
\end{equation*}
for $0 < \alpha \leq \frac{1}{\sqrt{3}}$.
\end{prop}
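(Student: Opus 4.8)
The plan is to run the argument of Proposition~\ref{DivisorsProp} essentially verbatim, transporting every computation through Schmidt's equivalence $\Phi$ and reducing all intersection numbers on $\tilde{X}$ to numbers on the base $X$ via the projection formula. The point to exploit is that the central charge $Z^{k\omega,\tilde{\Gamma}}_{\alpha,k\beta}$ on $\tilde{X}$ pulls back the Bayer--Macr\`i--Toda central charge of $(X,\cB)$ along $\Phi$, so membership of $\tilde{L}^k[2]$ and $\cS_{\tilde{V}}[1]$ in the heart $\cA^{k\tilde{\omega}}_{\alpha,k\beta}(\tilde{X})=\Phi^{-1}\bigl(\cA^{k\tilde{\omega}}_{\alpha,k\beta}(X,\cB)\bigr)$ is detected by the double tilt governed by the forgetful slopes $\mu_{\tilde{\omega},\tilde{B}}$ and $\nu^{\tilde{\omega}}_{\alpha,\beta}$, both read off from $Z^{k\omega,\tilde{\Gamma}}_{\alpha,k\beta}$ as functions of the $\tilde{B}$-twisted Chern character on $\tilde{X}$. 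In particular the new term $\tilde{\Gamma}\cdot\ch^{\tilde{B}}_1$ sits in the \emph{real} part of the central charge and plays no role in the tilt, so it can be ignored for the purpose of heart membership. I would first take $k$ sufficiently divisible that $\tilde{L}^k=\pi^*L^k(k\delta E)$ is a genuine line bundle, and record the identities that make $E$ invisible to the slopes: since $\pi^*\omega|_E=0$ one has $\tilde{\omega}^2\cdot E=\tilde{\omega}\cdot E^2=0$, and $\pi^*c_1(L)\cdot E=0$.

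The core computation is then a near-copy of Proposition~\ref{DivisorsProp}. Because $\tilde{\omega}=\pi^*\omega$ annihilates every exceptional contribution, both the factor $\olo(k\delta E)$ in $\tilde{L}^k$ and the $2E$ summand of $\tilde{B}=2E+k\beta\,\pi^*\omega$ drop out of $\tilde{\omega}^2\cdot\ch^{\tilde{B}}_1$ and $\tilde{\omega}\cdot\ch^{\tilde{B}}_2$, and the projection formula gives $\mu_{\tilde{\omega},\tilde{B}}(\tilde{L}^k)=\mu_{\omega,\beta}(L)<0$ together with $\nu^{k\tilde{\omega}}_{\alpha,k\beta}(\tilde{L}^k)=\nu^{\omega}_{\alpha,\beta}(L)$, the latter being negative for $0<\alpha\le 1/\sqrt{3}$ by the hypothesis $\cos(\hat{\theta}_\beta)<0$. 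For a toric divisor $\tilde{V}\neq E$ I would define $\cS_{\tilde{V}}$ by the short exact sequence $0\to\tilde{L}^k\to\tilde{L}^k(k_V\tilde{V})\to\cS_{\tilde{V}}\to 0$; the same reduction shows $\mu_{\tilde{\omega},\tilde{B}}(\tilde{L}^k(k_V\tilde{V}))<0$ and $\nu^{k\tilde{\omega}}_{\alpha,k\beta}(\tilde{L}^k(k_V\tilde{V}))<0$ for $k\gg k_V\gg 1$, the contribution of $k_V\tilde{V}$ being of order $O(k^{-1})$ relative to the leading term $\nu^{\omega}_{\alpha,\beta}(L)$. As in Proposition~\ref{DivisorsProp} one then deduces $\tilde{L}^k[1],\tilde{L}^k(k_V\tilde{V})[1]\in\cF^{k\tilde{\omega}}_{\alpha,k\beta}$, hence $\tilde{L}^k[2],\tilde{L}^k(k_V\tilde{V})[2]\in\cA^{k\tilde{\omega}}_{\alpha,k\beta}(\tilde{X})$ while the torsion sheaf $\cS_{\tilde{V}}$ lies in $\Coh^{k\beta}$ and $\cS_{\tilde{V}}[1]$ in the heart, so the triangle $\cS_{\tilde{V}}[1]\to\tilde{L}^k[2]\to\tilde{L}^k(k_V\tilde{V})[2]$ is induced by a short exact sequence in $\cA^{k\tilde{\omega}}_{\alpha,k\beta}(\tilde{X})$, yielding $\cS_{\tilde{V}}[1]\subset\tilde{L}^k[2]$. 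A uniform choice of $k\gg k_1\gg 1$ over the finitely many toric divisors $\tilde{V}\neq E$ completes the argument.

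Two points require attention, and the second is where I expect the genuine difficulty to lie. The exclusion $\tilde{V}\neq E$ is essential: for every other toric divisor $\pi_*\tilde{V}$ is an honest divisor on $X$, so $\tilde{\omega}^2\cdot\tilde{V}=\omega^2\cdot\pi_*\tilde{V}>0$ and all slope denominators are nondegenerate, whereas $\tilde{\omega}^2\cdot E=0$ would make the slopes above ill-defined, so $\cS_E$ must be handled separately. The main obstacle is the step, used implicitly for both $\tilde{L}^k$ and $\tilde{L}^k(k_V\tilde{V})$, that passing from $\nu<0$ to membership in $\cF^{k\tilde{\omega}}_{\alpha,k\beta}$ requires $\nu_{\max}=\nu$, i.e. the $\nu$-stability of these line bundle objects. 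On $X$ this was supplied by \cite{BayerMacriStellari_3folds}, Corollary~3.11$(b)$, but $\tilde{X}=\Bl_p X$ in general violates the Bogomolov--Gieseker inequality and $\tilde{\omega}=\pi^*\omega$ is only nef, so the naive criterion does not apply --- this is exactly the difficulty that Schmidt's twisted category $D^b(X,\cB)$ is designed to circumvent. I would therefore establish the required stability inside $D^b(X,\cB)$, invoking the $\cB$-twisted analogue of that criterion proved in \cite{Schmidt_blowups}, and verify that the images $\Phi(\tilde{L}^k)$ and $\Phi(\tilde{L}^k(k_V\tilde{V}))$ remain stable for divisors $\tilde{V}\neq E$.
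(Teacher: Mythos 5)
You correctly locate the crux — that passing from $\nu<0$ to membership in the $\cF$-part of the tilt requires the $\nu$-stability of the line-bundle objects, and that \cite{BayerMacriStellari_3folds}, Corollary 3.11$(b)$ is unavailable on $\tilde{X}$ — but your proposed repair does not close the gap. You defer the decisive step to ``the $\cB$-twisted analogue of that criterion proved in \cite{Schmidt_blowups}''; no such analogue is established there, and the paper's proof does not use one. What the paper does instead is an explicit computation that makes any twisted stability criterion unnecessary: for $k$ sufficiently large and divisible (so that $\tilde{L}^k$ is a genuine line bundle and $k\delta - 2 - m k_{\tilde{V}} > 0$), and writing $\tilde{V} = \pi^*V - mE$ for the proper transform of a toric divisor $V \subset X$, one computes via $R\pi_*$ that
\begin{equation*}
\Phi(\tilde{L}^k) \cong (L^k)^{\oplus 3}, \qquad \Phi(\tilde{L}^k(k_{\tilde{V}}\tilde{V})) \cong (L^k(k_{\tilde{V}} V))^{\oplus 3}.
\end{equation*}
Since the slope and tilt structure on $\Coh^{k\beta}(X,\cB)$ is defined by the \emph{forgetful} Chern character, any subobject in $\Coh^{k\beta}(X,\cB)$ forgets to a subobject in $\Coh^{k\beta}(X)$ of the same slope, so the \emph{untwisted} $\nu$-stability of $L^k[1]$ on $X$ — where the hypotheses $(a)$, $(b)$ do hold and \cite{BayerMacriStellari_3folds}, Corollary 3.11$(b)$ applies — yields $\Phi(\tilde{L}^k[1]),\,\Phi(\tilde{L}^k(k_{\tilde V}\tilde{V})[1]) \in \cF^{k\omega}_{\alpha,k\beta}(X,\cB)$ directly. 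This computation is also what explains the divisibility hypothesis on $k$ and the exclusion $\tilde{V} \neq E$ (the argument needs $\tilde{V}$ to be a proper transform; for $\tilde{V}=E$ the pushforwards $R\pi_*$ of large positive twists by $E$ acquire higher cohomology and the reduction to $X$ breaks down), rather than your heuristic about degenerate slope denominators.

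There is also a structural problem in your first two paragraphs: you treat membership in $\cA^{k\tilde{\omega}}_{\alpha,k\beta}(\tilde{X})$ as if it were ``detected by a double tilt of $\Coh(\tilde{X})$ governed by slopes on $\tilde{X}$.'' Schmidt's heart is $\Phi^{-1}\bigl(\cA^{k\omega}_{\alpha,k\beta}(X,\cB)\bigr)$, obtained by tilting $\Coh(X,\cB)$ with the forgetful slope on $X$; it is not presented as a tilt of $\Coh(\tilde{X})$, and Harder--Narasimhan theory in $\Coh(\tilde{X})$ with respect to the merely nef class $\tilde{\omega}=\pi^*\omega$ is not available in the first place. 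Your intersection-number identities on $\tilde{X}$ (e.g. $\mu_{\tilde{\omega},\tilde{B}}(\tilde{L}^k)=\mu_{\omega,\beta}(L)$, $\nu^{k\tilde{\omega}}_{\alpha,k\beta}(\tilde{L}^k)=\nu^{k\omega}_{\alpha,k\beta}(L^k)$) are numerically correct and consistent with the paper's computations, but they cannot substitute for checking, after applying $\Phi$, where the objects land in the tilts of $\Coh(X,\cB)$ — which is exactly what the isomorphisms $\Phi(\tilde{L}^k)\cong (L^k)^{\oplus 3}$ accomplish.
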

\begin{proof} We work with tensor powers $\tilde{L}^k$ defining a genuine line bundle. Then 
\begin{align*}
&\Phi(\tilde{L}^k) = R\pi_* R\shHom(\cE, \tilde{L}^k)\\
&= R\pi_* R\shHom(\olo_{\tilde{X}} \oplus \olo_{\tilde{X}}(E) \oplus \olo_{\tilde{X}}(2E), \tilde{L}^k)\\
&= R\pi_* \big(\tilde{L}^k \oplus \tilde{L}^k(-E) \oplus \tilde{L}^k(-2E)\big)\\
&= R\pi_* \big(\pi^*L^k(k \delta E) \oplus \pi^*L^k((k \delta -1)E)\oplus \pi^*L^k((k \delta -2)E)\big).
\end{align*}
We choose $k > 0$ sufficiently large and divisible so that $\pi^*L^k(k \delta E)$ is a genuine line bundle and we have $k \delta -2 > 0$. Then
\begin{equation*}
R\pi_*  \pi^*L^k((k \delta - i)E) = L^k,\,i = 0, 1, 2,
\end{equation*}
and so
\begin{equation*}
\Phi(\tilde{L}^k) = (L^k)^{\oplus 3}.
\end{equation*}
Since $\Phi$ is a functor of triangulated categories, this also shows
\begin{equation*}
\Phi(\tilde{L}^k[1]) = (L^k[1])^{\oplus 3}.
\end{equation*}
By our assumption $\mu_{\omega, \beta}(L) < 0$ we have $L^k[1] \in \Coh^{k\beta}(X)$. Thus, 
\begin{equation*}
\tilde{L}^k[1] \in \Coh^{k\beta}(X, \cB).
\end{equation*} 
Moreover, as in the proof of Proposition \ref{DivisorsProp}, the condition 
\begin{equation*}
\omega \cdot (c_1(L) - \beta\omega)^2 - \frac{\omega^3}{3} > 0. 
\end{equation*}
implies $L^k[1] \in \cF^{k\omega}_{\alpha, k\beta}$, so we also have
\begin{equation*}
\Phi(\tilde{L}^k[1]) \in \cF^{k\omega}_{\alpha, k\beta}(X, \cB).
\end{equation*} 
Similarly, if $\tilde{V} \subset \tilde{X}$ is an irreducible toric divisor, 
we have
\begin{align*}
&\Phi(\tilde{L}^k(k_{\tilde{V}} \tilde{V})) = R\pi_* R\shHom(\cE, \tilde{L}^k(k_{\tilde{V}} \tilde{V}))\\
&= R\pi_* R\shHom(\olo_{\tilde{X}} \oplus \olo_{\tilde{X}}(E) \oplus \olo_{\tilde{X}}(2E), \tilde{L}^k(k_{\tilde{V}} \tilde{V}))\\
&= R\pi_* \big(\tilde{L}^k(k_{\tilde{V}} \tilde{V}) \oplus \tilde{L}^k(k_{\tilde{V}} \tilde{V} -E) \oplus \tilde{L}^k(k_{\tilde{V}} \tilde{V} -2E)\big)\\
&= R\pi_* \big(\pi^*L^k(k_{\tilde{V}} \tilde{V} + k \delta E) \oplus \pi^*L^k(k_{\tilde{V}} \tilde{V} + (k \delta -1)E)\\&\quad\oplus \pi^*L^k(k_{\tilde{V}} \tilde{V} + (k \delta -2)E)\big).
\end{align*}
Suppose $\tilde{V} \neq E$ is the proper transform of an irreducible toric divisor in $X$, so $\tilde{V} = \pi^{*} V - m E$ for $m \geq 0$ where $V \subset X$ is an irreducible toric divisor. Then, 
\begin{align*}
\pi^*L^k(k_{\tilde{V}} \tilde{V} + (k \delta - i) E) = \pi^*(L^k(k_{\tilde{V}} V))((k \delta - i - m k_{\tilde{V}})E),\,i = 0,1,2.
\end{align*}
Fix $k \gg k_{\tilde{V}} \gg 1$ sufficiently large and divisible so that $\pi^*(L^k(k_{\tilde{V}} V))( k \delta E)$ is a genuine line bundle and we have $k \delta - 2 - m k_{\tilde{V}} > 0$. Then 
\begin{equation*}
\Phi(\tilde{L}^k(k_{\tilde{V}} \tilde{V})) = (L^k(k_{\tilde{V}} V))^{\oplus 3}.
\end{equation*}
As above, by the argument in the proof of Proposition \ref{DivisorsProp}, this implies 
\begin{equation*}
\Phi(\tilde{L}^k(k_1 \tilde{V})[1]) \in \cF^{k\omega}_{\alpha, k\beta}(X, \cB) 
\end{equation*}
for all $k \gg k_1 \gg 1$ and all irreducible toric divisors $\tilde{V} \subset \tilde{X}$. 

The functor $\Phi$ is an equivalence, so we have an exact triangle in $D^b(X, \cB)$
\begin{equation*}
\Phi(\tilde{L}^k) \to \Phi(\tilde{L}^k(k_{\tilde{V}} \tilde{V})) \to \Phi(\cS_{\tilde{V}}) \to \Phi(\tilde{L}^k)[1]   
\end{equation*}
which, from our computations, is isomorphic to an exact triangle
\begin{equation}\label{DivisorPhi}
(L^k)^{\oplus 3} \to (L^k(k_{\tilde{V}} V))^{\oplus 3} \to \Phi(\cS_{\tilde{V}}) \to (L^k[1])^{\oplus 3}.   
\end{equation}
Thus, $j(\Phi(\cS_{\tilde{V}}))$ must be a nontrivial torsion sheaf, so we have 
\begin{equation*}
\Phi(\cS_{\tilde{V}}) \in \Coh^{k\beta}(X),
\end{equation*}
and, in particular, we obtain a short exact sequence 
\begin{equation*}
\Phi(\cS_{\tilde{V}}) \to \Phi(\tilde{L}^k)[1] \to \Phi(\tilde{L}^k(k_{\tilde{V}} \tilde{V}))[1]    
\end{equation*} 
in $\Coh^{k\beta}(X, \cB)$, with 
\begin{equation*}
\Phi(\tilde{L}^k)[1],\,\Phi(\tilde{L}^k(k_{\tilde{V}} \tilde{V}))[1] \in \cF^{k\omega}_{\alpha, k\beta}(X, \cB).
\end{equation*} 
This shows that 
\begin{equation*}
\Phi(\cS_{\tilde{V}}) \in \cF^{k\omega}_{\alpha, k\beta}(X, \cB)     
\end{equation*} 
and the claim follows as in the proof of Proposition \ref{DivisorsProp}.
\end{proof}
Similarly, we have an analogue of Proposition \ref{codim2Prop}.
\begin{prop}\label{Blpcodim2Prop} Fix the same assumptions and notation as in Proposition \ref{BlpDivisorsProp}. Then, for a uniform choice of the parameters $k, k_1, k_2$ appearing in the proof of Proposition \ref{SVProp}, with $k$ sufficiently divisible, for all toric \emph{curves} $\tilde{V} \subset X$ not contained in $E$, there is an inclusion 
\begin{equation*}
\cS_{\tilde{V}} \subset \tilde{L}^k[2] 
\end{equation*}
in the heart $\cA^{k\tilde{\omega}}_{\alpha, k\beta}(\tilde{X})$ of the Bridgeland stability condition 
\begin{equation*}
\sigma^{k\tilde{\omega}}_{\alpha, k\beta}(\tilde{X}) = \left(\cA^{k\tilde{\omega}}_{\alpha, k\beta}(\tilde{X}), Z^{k\omega, \tilde{\Gamma}}_{\alpha, k\beta}\right)
\end{equation*}
for $0 < \alpha \leq \frac{1}{\sqrt{3}}$.
\end{prop}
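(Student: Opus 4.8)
The plan is to transplant the curve construction from the proof of Proposition~\ref{codim2Prop} to the blowup setting, verifying membership in the relevant categories through the equivalence $\Phi\colon D^b(\tilde{X}) \xrightarrow{\cong} D^b(X, \cB)$ exactly as in the proof of Proposition~\ref{BlpDivisorsProp}. First I would record that a toric curve $\tilde{V} \subset \tilde{X}$ not contained in $E$ can be written as $\tilde{V} = \tilde{V}_1 \cap \tilde{V}_2$, where $\tilde{V}_1, \tilde{V}_2$ are proper transforms of toric divisors on $X$, and in particular $\tilde{V}_1, \tilde{V}_2 \neq E$. This follows from the description of $\tilde{X} = \Bl_p X$ as the star subdivision of the fan of $X$ at the maximal cone corresponding to $p$: the two-dimensional cones not containing the new ray are precisely the faces $\langle u_i, u_j\rangle$ of the subdivided maximal cones, and their orbit closures $\tilde{V}_1 \cap \tilde{V}_2$ avoid $E$; conversely every curve meeting $E$ does so along a cone $\langle u_0, u_i\rangle$ containing the exceptional ray. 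Hence $E$ never enters the splitting of a curve disjoint from, or transverse to, $E$.

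Following the proof of Proposition~\ref{codim2Prop}, I would then set up on $\tilde{X}$ the complexes
\begin{equation*}
\cS_{\tilde{V}_2} = [\tilde{L}^k \to \tilde{L}^k(k_2 \tilde{V}_2)], \qquad \cS_{\tilde{V}_1, \tilde{V}_2} = [\tilde{L}^k(k_1 \tilde{V}_1) \to \tilde{L}^k(k_1 \tilde{V}_1 + k_2 \tilde{V}_2)],
\end{equation*}
and define $\cS_{\tilde{V}}$ as the cone of the natural morphism $\cS_{\tilde{V}_2} \to \cS_{\tilde{V}_1, \tilde{V}_2}$. Since $\tilde{V}_1, \tilde{V}_2 \neq E$, the explicit computation in the proof of Proposition~\ref{BlpDivisorsProp} applies verbatim: for $k$ large and divisible, $\Phi$ sends $\tilde{L}^k(k_1\tilde{V}_1)$ and $\tilde{L}^k(k_1\tilde{V}_1 + k_2\tilde{V}_2)$ to $(L^k(k_1 V_1))^{\oplus 3}$, respectively $(L^k(k_1 V_1 + k_2 V_2))^{\oplus 3}$, on $X$, where $V_i = \pi(\tilde{V}_i)$. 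Consequently $\Phi(\cS_{\tilde{V}_2})$ and $\Phi(\cS_{\tilde{V}_1, \tilde{V}_2})$ are two-dimensional torsion sheaves, and $\Phi(\cS_{\tilde{V}})$ is their cone, which—just as in Proposition~\ref{codim2Prop}—is concentrated in degree zero and is a one-dimensional torsion sheaf (the cokernel of a restriction map supported along the codimension-two locus $V_1 \cap V_2$). Thus $j(\Phi(\cS_{\tilde{V}})) \in \Coh^{\leq 1}(X)$, so $\nu^{k\omega}_{\alpha, k\beta}(\Phi(\cS_{\tilde{V}})) = +\infty$, giving $\Phi(\cS_{\tilde{V}}) \in \cT^{k\omega}_{\alpha, k\beta}(X, \cB)$ and therefore $\cS_{\tilde{V}} \in \cA^{k\tilde{\omega}}_{\alpha, k\beta}(\tilde{X})$.

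Finally, I would rotate the defining triangle to $\cS_{\tilde{V}} \to \cS_{\tilde{V}_2}[1] \to \cS_{\tilde{V}_1, \tilde{V}_2}[1] \to \cS_{\tilde{V}}[1]$. Exactly as in Proposition~\ref{BlpDivisorsProp}, applying $\Phi$ shows that the divisor-type objects $\cS_{\tilde{V}_2}$ and $\cS_{\tilde{V}_1, \tilde{V}_2}$ lie in $\cF^{k\omega}_{\alpha, k\beta}(X, \cB)$: the extra twist by $k_1\tilde{V}_1$ changes the slope $\nu^{k\omega}_{\alpha, k\beta}$ only at lower order in $k$, so the strict negativity guaranteed by $\cos(\hat{\theta}_{\beta}) < 0$ for $0 < \alpha \leq \frac{1}{\sqrt{3}}$ survives once $k \gg k_1 \gg k_2 \gg 1$. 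Hence $\cS_{\tilde{V}_2}[1], \cS_{\tilde{V}_1, \tilde{V}_2}[1] \in \cA^{k\tilde{\omega}}_{\alpha, k\beta}(\tilde{X})$, the rotated triangle is induced by a short exact sequence in $\cA^{k\tilde{\omega}}_{\alpha, k\beta}(\tilde{X})$, and it yields an injection $\cS_{\tilde{V}} \hookrightarrow \cS_{\tilde{V}_2}[1]$. Composing with the injection $\cS_{\tilde{V}_2}[1] \subset \tilde{L}^k[2]$ furnished by Proposition~\ref{BlpDivisorsProp} produces the desired inclusion $\cS_{\tilde{V}} \subset \tilde{L}^k[2]$.

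I expect the only real obstacle to be bookkeeping: choosing a single $k$, large and sufficiently divisible, together with $k \gg k_1 \gg k_2 \gg 1$, that works uniformly over the finitely many toric curves $\tilde{V}$ and over all twisted divisor objects occurring (including those twisted by $k_1\tilde{V}_1$), while keeping each exponent of the form $k\delta - i - m k_{\tilde{V}}$ positive so that $\Phi$ collapses the relevant twisted line bundles to genuine triples $(L^k(\cdots))^{\oplus 3}$ on $X$. Since the slope-theoretic and analytic content has already been absorbed into Propositions~\ref{BlpDivisorsProp} and~\ref{codim2Prop}, no new difficulty of principle should arise.
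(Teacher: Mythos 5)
Your proposal is correct and follows essentially the same route as the paper: the same decomposition $\tilde{V} = \tilde{V}_1 \cap \tilde{V}_2$ with $\tilde{V}_i \neq E$, the same complexes $\cS_{\tilde{V}_2}$, $\cS_{\tilde{V}_1, \tilde{V}_2}$ pushed through $\Phi$ as in Proposition \ref{BlpDivisorsProp}, the torsion-sheaf/$\nu = +\infty$ argument of Proposition \ref{codim2Prop} to place $\cS_{\tilde{V}}$ in the heart, and the rotated triangle yielding $\cS_{\tilde{V}} \hookrightarrow \cS_{\tilde{V}_2}[1] \subset \tilde{L}^k[2]$. Your fan-theoretic justification that a toric curve not contained in $E$ is cut out by two proper-transform divisors, and your explicit divisibility bookkeeping for the exponents $k\delta - i - m k_{\tilde{V}}$, merely spell out points the paper asserts without comment.
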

\begin{proof} Let $\tilde{V} \subset \tilde{X}$ be a toric $1$-dimensional submanifold not contained in $E$. We write $\tilde{V}$ as an intersection of toric divisors, $\tilde{V} = \tilde{V}_1 \cap \tilde{V}_2$, with $\tilde{V}_i \neq E$ for $i =1, 2$. Consider the corresponding complexes of line bundles
\begin{equation*}
\cS_{\tilde{V}_2} = [L^k \to L^k(k_2 \tilde{V}_2)],\,\cS_{\tilde{V}_1, \tilde{V}_2} = [L^k(k_1 \tilde{V}_1) \to L^k(k_1 \tilde{V}_1 + k_2 \tilde{V}_2)].
\end{equation*}
By the same argument as in the proof of Proposition \ref{BlpDivisorsProp}, we have
\begin{equation*}
\Phi(\cS_{\tilde{V}_2}),\,\Phi(\cS_{\tilde{V}_1, \tilde{V}_2}) \in \cF^{k\omega}_{\alpha, k\beta}(X, \cB).
\end{equation*}

By construction, the object $\cS_{\tilde{V}}$ corresponding to $\tilde{V}$ is given by   
\begin{equation*}
\cS_{\tilde{V}} \cong [\cS_{\tilde{V}_2} \to \cS_{\tilde{V}_1, \tilde{V}_2}],
\end{equation*} 
and, using the equivalence $\Phi$, we have 
\begin{equation*}
j(\Phi(\cS_{\tilde{V}}))\cong [j(\Phi(\cS_{\tilde{V}_2})) \to j(\Phi(\cS_{\tilde{V}_1, \tilde{V}_2}))].
\end{equation*} 
By the same argument as in the proof of Proposition \ref{codim2Prop}, $\cS_V$ must be a torsion sheaf of dimension $1$ and so we have 
\begin{equation*}
\cS_V \in \cT^{k\tilde{\omega}}_{\alpha, k\beta}(\tilde{X}) \subset \cA^{k\tilde{\omega}}_{\alpha, k\beta}(\tilde{X}).
\end{equation*}

The exact triangle defining $\cS_V$
\begin{equation*}
\cS_{V_2} \to \cS_{V_1, V_2} \to \cS_V \to \cS_{V_2}[1]
\end{equation*}
induces an exact triangle
\begin{equation*}
\cS_V \to \cS_{V_2}[1] \to \cS_{V_1, V_2}[1] \to \cS_V[1].
\end{equation*}
By the same argument as in the proof of Proposition \ref{BlpDivisorsProp}, we have 
\begin{equation*}
\cS_{V_2}[1],\, \cS_{V_1, V_2}[1]  \in \Phi^{-1}\left(\cF^{k\omega}_{\alpha, k\beta}(X, \cB)[1]\right) \subset \cA^{k\tilde{\omega}}_{\alpha, k\beta}(\tilde{X}),
\end{equation*} 
so the latter triangle must be induced by an exact sequence 
\begin{equation*}
\cS_V \to \cS_{V_2}[1] \to \cS_{V_1, V_2}[1] 
\end{equation*}
in $\cA^{k\tilde{\omega}}_{\alpha, k\beta}(\tilde{X})$, and there is an injection $\cS_V \subset \cS_{V_2}[1]$ in $\cA^{k\tilde{\omega}}_{\alpha, k\beta}(\tilde{X})$. Composing this with the injection $\cS_{V_2}[1] \subset L^k[2]$ in $\cA^{k\tilde{\omega}}_{\alpha, k\beta}(\tilde{X})$ given by Proposition \ref{BlpDivisorsProp} proves the claim.
\end{proof}
Suppose $X$ is a projective toric manifold (not necessarily a threefold satisfying the conditions of Theorem \ref{ToricBStabThm}). We consider the dHYM equation on $\tilde{X} = \Bl_p X$ with respect to the K\"ahler class 
\begin{equation*}
\tilde{\omega}_{\delta} = \omega - \delta \sin(\rho)[E]  
\end{equation*}
on the $\Q$-line bundle 
\begin{equation*}
\tilde{L}^{\vee}_{\delta} = (L(\cos(\rho)\delta E))^{\vee}   
\end{equation*}
for suitable $\rho \in (0, \frac{\pi}{2})$ to be determined and $\delta \in \Q_{>0}$ sufficiently small (we can work with rational or even real line bundles in the dHYM equation since this is defined for arbitrary classes $[\alpha_0]$). We will need the following simple result; similar computations first appeared in \cite{GiacchettoMScThesis}, Section 3.2 and in greater generality in the recent work \cite{Carlo_PosBlps}, Section 4.
\begin{lemma}\label{BlpdHYMLem} Let $X^n$ be a projective toric manifold, with fixed classes $\omega_0$, $\alpha_0$. Suppose that there exists a lift $\hat{\theta}$ of $e^{\ii \hat{\theta}}$ such that
\begin{equation*}
\hat{\theta} \in \left(\frac{n-2}{2}\pi, \frac{n}{2}\pi\right).
\end{equation*}

Then, for all sufficiently small $\delta, \rho >0$, there exists a solution of the constant Lagrangian phase equation \eqref{LagPhaseEqu} on the blowup $\tilde{X} = \Bl_p X$ with respect to 
\begin{equation*}
\tilde{\omega}_{\delta} = \tilde{\omega} - \delta \sin(\rho)[E],\,\tilde{L}^{\vee}_{\delta} = (L(\cos(\rho)\delta E))^{\vee}   
\end{equation*}
if, and only if, for all proper irreducible toric subvarieties $\tilde{V} \subset \tilde{X}$ not contained in $E$ we have
\begin{equation}\label{BlpdHYMPosIntro}
\int_{\tilde{V}} \Rea(\ii \tilde{\omega}_{\delta} + c_1(\tilde{L}^{\vee}_{\delta}))^{\dim {\tilde{V}}} - \cot(\varphi_{\delta})\Imm(\ii \tilde{\omega}_{\delta} +  c_1(\tilde{L}^{\vee}_{\delta}))^{\dim \tilde{V}} > 0,
\end{equation}
where $\varphi_{\delta}$ denotes the lifted angle corresponding to $\tilde{\omega}_{\delta}$, $\tilde{L}^{\vee}_{\delta}$.
\end{lemma}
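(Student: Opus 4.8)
The plan is to deduce the statement directly from the Nakai--Moishezon criterion of Theorem \ref{dHYMThm}, applied on $\tilde{X}$ with the classes $\tilde{\omega}_{\delta}$ and $c_1(\tilde{L}^{\vee}_{\delta})$. Theorem \ref{dHYMThm} gives an \emph{iff} between solvability of the constant Lagrangian phase equation \eqref{LagPhaseEqu} and the positivity condition \eqref{dHYMPosIntro} imposed on \emph{all} proper irreducible subvarieties; by torus-invariance it suffices to test toric subvarieties, as in the proof of Proposition \ref{SVProp} and in \cite{J_toricThomasYau}. So the real content of the lemma is to show that, for $\delta,\rho$ small, the positivity condition \eqref{BlpdHYMPosIntro} is \emph{automatic} on every toric subvariety contained in $E$, and hence only needs to be imposed on the subvarieties $\tilde{V} \not\subseteq E$ appearing in the statement.

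First I would check the phase. As $\delta,\rho \to 0^+$ the perturbed classes satisfy $\tilde{\omega}_{\delta}\to \pi^*\omega$ and $c_1(\tilde{L}^{\vee}_{\delta}) \to \pi^* c_1(L^{\vee})$, and since top intersection numbers are birational invariants the phase $\hat{\theta}_{\delta}$ determined by $\int_{\tilde{X}}(\ii\tilde{\omega}_{\delta}+c_1(\tilde{L}^{\vee}_{\delta}))^n$ converges to the phase $\hat{\theta}$ of $(\omega, c_1(L^{\vee}))$ on $X$, which lies in the open interval $\left(\frac{n-2}{2}\pi,\frac{n}{2}\pi\right)$ by hypothesis. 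As this is an open condition, $\hat{\theta}_{\delta}$ lies in the same interval for all sufficiently small $\delta,\rho$, and $\varphi_{\delta} = \frac{n}{2}\pi - \hat{\theta}_{\delta} \to \varphi \in (0,\pi)$ stays bounded away from $0$ and $\pi$; in particular $\cot(\varphi_{\delta})$ remains bounded. This is exactly what is needed to invoke Theorem \ref{dHYMThm} on $\tilde{X}$ in the supercritical range.

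The heart of the argument is the computation on $E \cong \PP^{n-1}$. Since $\pi(E)$ is a point, $\pi^*\omega$ and $\pi^*c_1(L^{\vee})$ restrict trivially to $E$, while $[E]|_E = -H$ with $H$ the hyperplane class. Hence
\[
(\ii\tilde{\omega}_{\delta} + c_1(\tilde{L}^{\vee}_{\delta}))|_E = \ii\delta\sin(\rho)H + \delta\cos(\rho)H = \delta\, e^{\ii\rho}\, H,
\]
so for any irreducible toric $W \subseteq E$ of dimension $j$ one gets $\int_W(\ii\tilde{\omega}_{\delta}+c_1(\tilde{L}^{\vee}_{\delta}))^j = \delta^j e^{\ii j\rho}\deg W$, a number of argument $j\rho$. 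The left-hand side of \eqref{BlpdHYMPosIntro} therefore equals $\delta^j\deg W\,(\cos(j\rho) - \cot(\varphi_{\delta})\sin(j\rho))$, which tends to $\delta^j\deg W > 0$ as $\rho\to 0$. Because $\tilde{X}$ has only finitely many toric subvarieties contained in $E$ and $\cot(\varphi_{\delta})$ is bounded, a single small choice of $\rho$ makes \eqref{BlpdHYMPosIntro} hold for all of them simultaneously, and the same $\rho$ (shrinking $\delta$ if necessary) keeps the phase supercritical.

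Putting this together, for $\delta,\rho$ small the positivity of \eqref{BlpdHYMPosIntro} over all toric subvarieties of $\tilde{X}$ is equivalent to its positivity over those toric subvarieties \emph{not} contained in $E$, which is precisely the condition in the statement; combined with Theorem \ref{dHYMThm} this yields the claimed equivalence. I expect the genuinely new input, specific to the blowup, to be the identity $(\ii\tilde{\omega}_{\delta}+c_1(\tilde{L}^{\vee}_{\delta}))|_E = \delta\, e^{\ii\rho} H$: it shows the exceptional locus is ``dHYM-positive for free,'' and it is exactly the chosen coupling of the perturbations by $\sin(\rho)$ (of the K\"ahler class) and $\cos(\rho)$ (of the line bundle) that produces the clean phase $\rho$. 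The only delicate points will be bookkeeping ones — choosing $\delta,\rho$ uniformly small so that the supercritical phase condition and the automatic positivity along $E$ hold at once, and invoking the reduction to toric test subvarieties inherited from the toric framework.
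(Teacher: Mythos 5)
Your proposal is correct and follows essentially the same route as the paper's proof: both reduce the Nakai--Moishezon criterion of Theorem \ref{dHYMThm} (tested on toric subvarieties) to showing positivity is automatic along $E$, via the computation $(\ii \tilde{\omega}_{\delta} + c_1(\tilde{L}^{\vee}_{\delta}))|_E = \delta e^{\ii \rho} H$, together with the phase estimate $\varphi_{\delta} = \varphi(1+O(\delta))$ keeping things supercritical. The paper phrases the condition on $E$ as the explicit inequality $\cot(\dim \tilde{V}\,\rho) > \cot(\varphi_{\delta})$, i.e.\ $\varphi > (n-1)\rho$, where you argue by continuity as $\rho \to 0$ with $\cot(\varphi_{\delta})$ bounded --- an inessential difference.
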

\begin{proof} The topological angle $e^{\ii \hat{\theta}_{\delta}}$ on the blowup is given by
\begin{align*}
\int_{\tilde{X}} (\tilde{\omega}_{\delta} + \ii c_1(\tilde{L}^{\vee}_{\delta}))^n \in \R_{>0} e^{\ii \hat{\theta}_{\delta}},
\end{align*}
so we have $e^{\ii \hat{\theta}_{\delta}} = e^{\ii\hat{\theta}}(1 + O(\delta))$. It follows that if there is a lift $\hat{\theta}$ of $e^{\ii \hat{\theta}}$ such that $\hat{\theta} \in \left(\frac{n-2}{2}\pi, \frac{n}{2}\pi\right)$, then the same holds for the angle on the blowup $\hat{\theta}_{\delta}$ for all sufficiently small $\delta \in \Q_{>0}$. Similarly, we have
\begin{equation*}
\varphi_{\delta}:= \frac{n}{2}\pi - \hat{\theta}_{\delta} = \varphi(1 + O(\delta))\in (0, \pi).
\end{equation*}

The Nakai-Moishezon criterion on the blowup requires that the inequality \eqref{BlpdHYMPosIntro} holds for all irreducible subvarieties $\tilde{V} \subset \tilde{X}$. We wish to show that \eqref{BlpdHYMPosIntro} holds automatically for $\tilde{V} \subset E$ provided $\delta, \rho > 0$ are sufficiently small. 

When $\tilde{V} \subset E$, we have
\begin{align*}
&\int_{\tilde{V}}(\ii \omega_{\delta} + c_1(\tilde{L}^{\vee}_{\delta}))^{\dim {\tilde{V}}}\\
& = \delta^{\dim {\tilde{V}}}(\cos(\rho) + \ii \sin(\rho))^{\dim {\tilde{V}}}(-c_1(\olo_{\PP^{\dim {\tilde{V}}}}(-1))^{\dim {\tilde{V}}}\\
& = \delta^{\dim {\tilde{V}}} (\cos(\dim {\tilde{V}}\rho) + \ii \sin(\dim {\tilde{V}}\rho))  (-c_1(\olo_{\PP^{\dim\tilde{V}}}(-1))^{\dim {\tilde{V}}},
\end{align*}
yielding 
\begin{align*}
& \int_{\tilde{V}} \Rea(\ii \tilde{\omega}_{\delta} + c_1(\tilde{L}^{\vee}_{\delta}))^{\dim {\tilde{V}}} = \delta^{\dim {\tilde{V}}}\cos(\dim {\tilde{V}}\rho),\\
&\int_{\tilde{V}} \Imm(\ii \tilde{\omega}_{\delta} +  c_1(\tilde{L}^{\vee}_{\delta}))^{\dim \tilde{V}} = \delta^{\dim {\tilde{V}}}\sin(\dim {\tilde{V}}\rho),
\end{align*}
and, since we may assume $\sin(\dim {\tilde{V}}\rho) > 0$, our condition becomes
\begin{equation*}
\cot\left( \dim {\tilde{V}} \rho\right) > \cot(\varphi_{\delta}),
\end{equation*}
which, for sufficiently small $\delta$, is equivalent to
\begin{equation*}
\cot\left( \dim {\tilde{V}}\rho\right) > \cot(\varphi).
\end{equation*}
In the interval $\varphi \in (0, \pi)$, this is equivalent to
\begin{equation*}
\varphi > (n-1)\rho.
\end{equation*}
Since $\varphi > 0$, the latter condition is satisfied for all sufficiently small $\rho \in (0, \frac{\pi}{2})$, such that $\cos(\rho)$ is rational.
\end{proof}
\begin{rmk}\label{ProperTransRmk} Note that, when $\tilde{V}$ is the proper transform of an irreducible subvariety $V \subset X$, the inequality \eqref{BlpdHYMPosIntro} is equivalent to the corresponding inequality along $V$ with respect to $\omega$, $L^{\vee}$, provided $\delta \in \Q_{>0}$ is sufficiently small, and $\delta$ can be chosen uniformly at least if there are finitely many such $\tilde{V}$ to check, as in the toric case. 
\end{rmk}
In the following we restrict to the case $\alpha = \frac{1}{\sqrt{3}}$, $\beta = 0$ for simplicity (this is enough for our application to special Lagrangians).
\begin{cor}\label{BlpBstabdHYMCor} Let $(X, \omega)$ satisfy the assumptions of Theorem \ref{ToricBStabThm}. Let $L$ denote a line bundle satisfying $\mu_{\omega}(L) < 0$ and 
\begin{equation*}
\cos(\hat{\theta}) < 0 \iff \omega \cdot (c_1(L))^2 - \frac{\omega^3}{3} > 0. 
\end{equation*}    
Define
\begin{equation*}
\tilde{\omega}_{\delta} = \tilde{\omega} - \delta \sin(\rho)[E],\,\tilde{L}^{\vee}_{\delta} = (L(\cos(\rho)\delta E))^{\vee}   
\end{equation*}
on $\tilde{X} = \Bl_{p} X$.

Suppose that 
\begin{enumerate}
\item[$(i)$] $(\tilde{L}_{\delta})^k[2]$ is \emph{semistable} with respect to the Bridgeland stability condition $\sigma^{k\tilde{\omega}}_{1/{\sqrt{3}}, 0}(\tilde{X})$ for sufficiently large $k > 0$,
\item[$(ii)$] the K\"ahler class $[\omega]$ is \emph{generic} in the same sense of Corollary \ref{BStabdHYMCor}, $(ii)$.
\end{enumerate}

Then the dHYM equation is solvable on $\tilde{X}$ with respect to $\tilde{\omega}_{\delta}$, $\tilde{L}^{\vee}_{\delta}$ for all sufficiently small $\delta, \rho > 0$.
\end{cor}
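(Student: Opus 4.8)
The plan is to transport the whole problem down to $X$ through the Bridgeland-equivalence $\Phi$, invoke the consequence already recorded on $X$ in Proposition~\ref{SVProp} and Corollary~\ref{BStabdHYMCor}, and then push the resulting dHYM-positivity back up to $\tilde{X}$ using Remark~\ref{ProperTransRmk} and Lemma~\ref{BlpdHYMLem}. The key point is that, although the central charge $Z^{k\omega,\tilde{\Gamma}}_{1/\sqrt{3},0}$ carries the twist $\tilde{B}=2E$ and the Bogomolov--Gieseker correction $\tilde{\Gamma}=-\frac{E^2}{6}$, after composition with $\Phi$ it collapses to a multiple of the bare Douglas charge on $X$.

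First I would record the numerical reduction. By Schmidt's construction (\cite{Schmidt_blowups}, Theorem~4.6) underlying Proposition~\ref{SchmidtProp}, the charge $Z^{k\omega,\tilde{\Gamma}}_{1/\sqrt{3},0}$ is the pullback along $\Phi$ of the Douglas charge on $D^b(X,\cB)$ defined via the forgetful Chern character; for $\alpha=\frac{1}{\sqrt{3}}$, $\beta=0$ this reads
\begin{equation*}
Z^{k\omega,\tilde{\Gamma}}_{1/\sqrt{3},0}(F) = -\int_X e^{-\ii k\omega}\,\ch\!\big(j\Phi(F)\big).
\end{equation*}
The computations in the proofs of Propositions~\ref{BlpDivisorsProp} and~\ref{Blpcodim2Prop} give $j\Phi((\tilde{L}_\delta)^k)=(L^k)^{\oplus 3}$ and, for $\tilde{V}\neq E$ the proper transform of a toric subvariety $V=\pi(\tilde{V})\subset X$, that $\ch(j\Phi(\cS_{\tilde{V}}))=3\,\ch(\cS_V)$, where $\cS_V$ is the corresponding destabiliser on $X$ from Proposition~\ref{SVProp} built with the same parameters $k\gg k_1\gg k_2$. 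Hence
\begin{equation*}
Z^{k\omega,\tilde{\Gamma}}_{1/\sqrt{3},0}(\cS_{\tilde{V}}) = 3\,Z^{k\omega}_{1/\sqrt{3},0}(\cS_V), \qquad Z^{k\omega,\tilde{\Gamma}}_{1/\sqrt{3},0}((\tilde{L}_\delta)^k) = 3\,Z^{k\omega}_{1/\sqrt{3},0}(L^k),
\end{equation*}
and the positive factor $3$ drops out of every phase comparison.

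Next I would feed in semistability. The inclusions $\cS_{\tilde{V}}[1]\subset(\tilde{L}_\delta)^k[2]$ (divisors, Proposition~\ref{BlpDivisorsProp}) and $\cS_{\tilde{V}}\subset(\tilde{L}_\delta)^k[2]$ (curves, Proposition~\ref{Blpcodim2Prop}), together with the assumed $\sigma^{k\tilde{\omega}}_{1/\sqrt{3},0}(\tilde{X})$-semistability of $(\tilde{L}_\delta)^k[2]$, yield $\arg Z^{k\omega,\tilde{\Gamma}}_{1/\sqrt{3},0}(\cS_{\tilde{V}}[\dim\tilde{V}-1])\leq\arg Z^{k\omega,\tilde{\Gamma}}_{1/\sqrt{3},0}((\tilde{L}_\delta)^k[2])$ for every $\tilde{V}\not\subset E$. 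By the previous step these are exactly the $X$-phase inequalities of Corollary~\ref{BStabdHYMCor}, that is the inequalities~\eqref{SVIneq} for the images $V=\pi(\tilde{V})$, which range over all irreducible toric subvarieties of $X$. For $k$ large, Proposition~\ref{SVProp} then gives dHYM-semipositivity~\eqref{dHYMSemiPos} for $(X,[\omega],c_1(L^\vee))$ along every such $V$. Finally, for $\tilde{V}\not\subset E$ Remark~\ref{ProperTransRmk} identifies the dHYM-integral~\eqref{BlpdHYMPosIntro} along $\tilde{V}$ (with respect to $\tilde{\omega}_\delta$, $\tilde{L}^\vee_\delta$) with the one along $V$ (with respect to $\omega$, $L^\vee$) once $\delta$ is small, uniformly over the finitely many toric $\tilde{V}$; genericity of $[\omega]$ upgrades semipositivity to strict positivity exactly as in Corollary~\ref{BStabdHYMCor}. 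For $\tilde{V}\subset E$ the strict inequality~\eqref{BlpdHYMPosIntro} holds automatically for small $\delta,\rho$ by Lemma~\ref{BlpdHYMLem}, which also keeps $\hat{\theta}_\delta$ in the supercritical range. Thus~\eqref{BlpdHYMPosIntro} holds along every proper irreducible toric subvariety of $\tilde{X}$, and Theorem~\ref{dHYMThm} produces the dHYM solution.

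The main obstacle is the identification in the first step: one must be sure that after composition with $\Phi$ the nonstandard charge $Z^{k\omega,\tilde{\Gamma}}_{1/\sqrt{3},0}$, with its $E$-twist and $\tilde{\Gamma}$-correction, genuinely becomes three copies of the bare Douglas charge on $X$, so that the blowup phase inequalities are the inequalities~\eqref{SVIneq} with no residual contribution from the exceptional locus, and so that the destabilisers $\cS_{\tilde{V}}$ match the $\cS_V$ of Proposition~\ref{SVProp} with parameters chosen uniformly in $\tilde{V}$. Once this bookkeeping is secured, the remaining steps — the uniform choice of $\delta,\rho$ and the genericity argument — are identical to the unblown-up case treated in Corollary~\ref{BStabdHYMCor}.
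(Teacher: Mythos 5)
Your proposal is correct and follows essentially the same route as the paper: semistability plus the inclusions of Propositions \ref{BlpDivisorsProp} and \ref{Blpcodim2Prop} give the phase inequalities, Schmidt's comparison of charges under $\Phi$ (the paper cites \cite{Schmidt_blowups}, Lemma 4.3, with $Z^{\tilde{\Gamma},k\tilde{\omega}}_{1/\sqrt{3},0} = \tfrac{1}{3}\,Z^{k\omega}_{1/\sqrt{3},0}\circ j\Phi$, rather than Theorem 4.6 without the $\tfrac{1}{3}$) collapses them to the inequalities \eqref{SVIneq} on $X$, and Proposition \ref{SVProp}, genericity, Remark \ref{ProperTransRmk} and Lemma \ref{BlpdHYMLem} then deliver strict dHYM-positivity on $\tilde{X}$ and solvability. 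The constant discrepancy ($\tfrac13$ versus $1$) is immaterial, as you note, since only phases enter the comparison.
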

\begin{proof} Firstly, by the condition $\cos(\hat{\theta}) < 0$, there is a lift $\hat{\theta} \in \left(\frac{n-2}{2}\pi, \frac{n}{2}\pi\right)$. 

Suppose $\tilde{L}^k[2]$ is semistable with respect to $\sigma^{k\tilde{\omega}}_{1/\sqrt{3}, 0}(\tilde{X})$. According to Proposition \ref{BlpDivisorsProp}, we must have
\begin{equation*}
\arg\left(Z^{\tilde{\Gamma}, k\tilde{\omega}}_{1/\sqrt{3}, 0}\big(\cS_{\tilde{V}}[1]\big)\right) \leq \arg\left(Z^{\tilde{\Gamma},k\tilde{\omega}}_{1/\sqrt{3}, 0}\big(\tilde{L}^k[2]\big)\right)   
\end{equation*}
for all divisors $\tilde{V} \subset \tilde{X}$ given by the proper transform of $V \subset X$. By \cite{Schmidt_blowups}, Lemma 4.3, we have
\begin{equation*}
Z^{\tilde{\Gamma}, k\tilde{\omega}}_{1/\sqrt{3}, 0}\big( F \big) = \frac{1}{3} Z^{k\omega}_{1/\sqrt{3}, 0}(j(\Phi(F))) 
\end{equation*}  
for all $F \in D^b(\tilde{X})$. Thus, by the proof of Proposition \ref{BlpDivisorsProp}, in particular \eqref{DivisorPhi}, the above phase inequality is equivalent to
\begin{equation*}
\arg\left(Z^{k \omega}_{1/\sqrt{3}, 0}\big(\cS_{V}[1]\big)\right) \leq \arg\left(Z^{k \omega}_{1/\sqrt{3}, 0}\big(L^k[2]\big)\right),
\end{equation*} 
and so, according to Proposition \ref{SVProp}, we have 
\begin{equation*} 
\int_V \Rea(\ii \omega + c_1(L^{\vee}))^{\dim V} - \cot(\varphi)\Imm(\ii \omega +  c_1(L^{\vee}))^{\dim V} \geq 0.
\end{equation*}
If $[\omega]$ is generic, we obtain the strict inequality
\begin{equation*} 
\int_V \Rea(\ii \omega + c_1(L^{\vee}))^{\dim V} - \cot(\varphi)\Imm(\ii \omega +  c_1(L^{\vee}))^{\dim V} > 0,
\end{equation*}
and so, by Remark \ref{ProperTransRmk}, we have 
\begin{equation*} 
\int_{\tilde{V}} \Rea(\ii \tilde{\omega}_{\delta} + c_1(\tilde{L}^{\vee}_{\delta}))^{\dim {\tilde{V}}} - \cot(\varphi_{\delta})\Imm(\ii \tilde{\omega}_{\delta} +  c_1(\tilde{L}^{\vee}_{\delta}))^{\dim \tilde{V}} > 0,
\end{equation*}
for $\delta, \rho > 0$ sufficiently small, uniformly in $V$.  

By Proposition \ref{BlpDivisorsProp} and its proof, the same argument applies when $\tilde{V} \subset \tilde{X}$ is a toric curve not contained in $E$. 

Thus, Lemma \ref{BlpdHYMLem} shows that constant Lagrangian phase equation \eqref{LagPhaseEqu} is solvable on $\tilde{X} = \Bl_p X$ with respect to $\tilde{\omega}_{\delta}$, $\tilde{L}^{\vee}_{\delta}$.
\end{proof}
Through the equivalence $D^b(\tilde{X}) \cong \FS(\tilde{\cY}_{q_k})$, we transfer the stability condition $\sigma^{k\tilde{\omega}}_{1/{\sqrt{3}}, 0}(\tilde{X})$ to $\FS(\tilde{\cY}_{q_k})$. We denote this stability condition on $\FS(\tilde{\cY}_{q_k})$ by $\sigma^{\vee} = (\cA^{\vee}, Z^{\vee})$. For fixed $k \gg 1$, we write $\cL$, $\cL_{\tilde{V}}$ respectively for a SYZ transform of $((\tilde{L}_{\delta})^k, h)$ and the corresponding potential destabilisers $\cS_{\tilde{V}}$, in the sense of Proposition \ref{SVProp}, when $\tilde{V} \subset \tilde{X}$ is not contained in the exceptional divisor $E$. 
\begin{thm}\label{BlpThm} Let $(X, \omega)$ satisfy the assumptions of Theorem \ref{ToricBStabThm}. Let $L$ denote a line bundle satisfying $\mu_{\omega}(L) < 0$ and 
\begin{equation*}
\cos(\hat{\theta}) < 0 \iff \omega \cdot (c_1(L))^2 - \frac{\omega^3}{3} > 0. 
\end{equation*}    
Let
\begin{equation*}
\tilde{\omega}_{\delta} = \tilde{\omega} - \delta \sin(\rho)[E],\,\tilde{L}^{\vee}_{\delta} = (L(\cos(\rho)\delta E))^{\vee}   
\end{equation*}
on $\tilde{X} = \Bl_{p} X$ for sufficiently small $\delta, \rho > 0$ with $\cos(\rho) \in \Q$. Define the graded Lagrangian section $\hat{\cL}$ in the mirror $\tilde{\cU}_{q_k}$ of $(\tilde{X}, k\tilde{\omega}_{\delta})$,
\begin{equation*}
\hat{\cL} = \cL[2] = \cL((\tilde{L}_{\delta})^{ k}, h)[2], 
\end{equation*}
as a shift of the SYZ transform $\cL((\tilde{L}_{\delta})^{ k}, h) \subset \tilde{\cU}_{q_k} \subset \tilde{\cY}_{q_k}$ for $k > 0$ sufficiently large and divisible (depending only on $(X, \omega)$ and $L$), with respect to a torus-invariant representative of $k[\tilde{\omega}_{\delta}]$. Write $\tilde{V} \subset \tilde{X}$ for irreducible toric subvarieties not contained in the exceptional divisor $E$. Then 
\begin{enumerate}
\item[$(i)$] there are inclusions $\cL_{\tilde V}[\dim {\tilde V} - 1] \subset \hat{\cL}$ in the heart $\cA^{\vee}$ of the Bridgeland stability condition $\sigma^{\vee}$ on $\FS(\tilde{\cY}_{q_k})$.
\item[$(ii)$] If $[\tilde{\omega}_{\delta}] \in H^{1,1}(X, \R)$ is \emph{generic} in the sense of Corollary \ref{BStabdHYMCor}, $(ii)$ and $\hat{\cL}$ is \emph{semistable} with respect to the stability condition $\sigma^{\vee}$ on $\FS(\cY_{q_k})$, then the inequalities 
\begin{equation*}
\arg Z^{\vee}\big(\cL_{\tilde V}[\dim V - 1]\big) \leq \arg Z^{\vee}\big(\hat{\cL}\big)
\end{equation*}
hold, and $\hat{\cL}$ is isomorphic in $\FS(\cY_{q_k})$ to a \emph{special} Lagrangian SYZ section.
\item[$(iii)$] If $\tilde{X}$ is weak Fano, the central charges of $\hat{\cL}$ and $\cL_{\tilde{V}}$ are given by periods of the holomorphic volume form, up to a small correction term 
\begin{align*}
&Z^{\vee}(\hat{\cL}) = \frac{1}{3}\frac{1}{(2\pi \ii)^{n}} \int_{[\hat{\cL}]} e^{-W(k \omega)/z} \Omega_0\,(1 + O(k^{-1}, \delta)),\\
&Z^{\vee}(\cL_{\tilde V}) = \frac{1}{3}\frac{1}{(2\pi \ii)^{n}} \int_{[\cL_{\tilde V}]} e^{-W(k \omega)/z} \Omega_0\,(1 + O(k^{-1},\delta)).
\end{align*}
\item[$(iv)$] When $\tilde{X}$ is not necessarily weak Fano, there exist a Landau-Ginzburg potential $\tilde{W}_{q_k}$, \emph{complex} cycles 
\begin{equation*}
\Gamma_{\hat{\cL}},\,\Gamma_{\cL_{\tilde{V}}} \in H_n(\tilde{\cY}_{q_k}, \{\Rea(\tilde{W}_{q_k}) \gg 0\}; \Z) \otimes \C, 
\end{equation*}
and a holomorphic volume form $\tilde{\Omega}^{(k)}$, such that  
\begin{align*} 
& Z^{\vee}(\hat{\cL}) = \frac{1}{3}\frac{1}{(2\pi \ii)^{n}}\int_{\Gamma_{\hat{\cL}}} e^{-\tilde{W}(k\tilde{\omega}_{\delta})/z}\tilde{\Omega}^{(k)} (1 + O(k^{-1},\delta)),\\  
& Z^{\vee}(\cL_{\tilde V}) = \frac{1}{3}\frac{1}{(2\pi \ii)^{n}}\int_{\Gamma_{\cL_{\tilde V}}} e^{-\tilde{W}(k\tilde{\omega}_{\delta})/z}\tilde{\Omega}^{(k)} (1 + O(k^{-1},\delta)),
\end{align*}
in the sense of asymptotic expansions for $z \to 0^+$. 
\end{enumerate}
\end{thm}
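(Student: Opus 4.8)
The plan is to assemble the four statements from the blowup analogues of the auxiliary results, following exactly the template of the proof of Theorem \ref{MainThm}. For $(i)$, I would transfer the inclusions established in $D^b(\tilde{X})$ through the homological mirror symmetry equivalence $D^b(\tilde{X}) \cong \FS(\tilde{\cY}_{q_k})$. Concretely, Proposition \ref{BlpDivisorsProp} gives the inclusion $\cS_{\tilde{V}}[1] \subset \tilde{L}^k[2]$ in the heart $\cA^{k\tilde{\omega}}_{1/\sqrt{3}, 0}(\tilde{X})$ whenever $\tilde{V} \neq E$ is an irreducible toric divisor, and Proposition \ref{Blpcodim2Prop} gives $\cS_{\tilde{V}} \subset \tilde{L}^k[2]$ when $\tilde{V}$ is a toric curve not contained in $E$. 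In particular $\tilde{L}^k[2]$ lies in the heart. Writing $\cL_{\tilde{V}}$ for the mirror object of $\cS_{\tilde{V}}$ and $\hat{\cL} = \cL((\tilde{L}_{\delta})^k, h)[2]$ for the mirror of $\tilde{L}^k[2]$, these become the claimed inclusions $\cL_{\tilde{V}}[\dim \tilde{V} - 1] \subset \hat{\cL}$ in $\cA^{\vee}$.

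For $(ii)$, the essential content is already contained in Corollary \ref{BlpBstabdHYMCor}, so the remaining work is a grading-shift bookkeeping. Assuming $\hat{\cL}$ is $\sigma^{\vee}$-semistable, the inclusions from $(i)$ force the phase inequalities $\arg Z^{\vee}(\cL_{\tilde{V}}[\dim \tilde{V} - 1]) \leq \arg Z^{\vee}(\hat{\cL})$. Using the central charge identity $Z^{\tilde{\Gamma}, k\tilde{\omega}}_{1/\sqrt{3}, 0}(F) = \frac{1}{3} Z^{k\omega}_{1/\sqrt{3}, 0}(j(\Phi(F)))$ of \cite{Schmidt_blowups}, Lemma 4.3, these are equivalent to the corresponding inequalities for the downstairs objects $\cS_V$ on $X$, hence by Proposition \ref{SVProp} to dHYM-semipositivity along each $V \subset X$. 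Genericity of $[\omega]$ upgrades this to strict dHYM-positivity, which by Remark \ref{ProperTransRmk} transfers to the proper transforms $\tilde{V}$ with respect to $\tilde{\omega}_{\delta}$, $\tilde{L}^{\vee}_{\delta}$ for $\delta$ sufficiently small. Lemma \ref{BlpdHYMLem} then supplies the remaining positivity along the subvarieties $\tilde{V} \subset E$ for $\delta, \rho$ small, so the Nakai--Moishezon criterion of Theorem \ref{dHYMThm} yields a solution of the constant Lagrangian phase equation on $\tilde{X}$. By Theorem \ref{LYZThm} the corresponding SYZ section is special Lagrangian, and since the shift $[2]$ alters only the grading, not the underlying Lagrangian (\cite{YangLi_ThomasYau}, Remark 2.6), $\hat{\cL}$ itself is isomorphic in $\FS(\tilde{\cY}_{q_k})$ to a special Lagrangian SYZ section.

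For $(iii)$ and $(iv)$ I would reduce to the corresponding asymptotic statements in Theorem \ref{MainThm} via the same central charge identity, which accounts for the overall factor $\frac{1}{3}$ appearing in the formulae; the weak Fano case then follows from Iritani's Gamma theorem as in \cite{J_toricThomasYau}, Section 5.2, and the general case from the toric mirror symmetry results of \cite{CoatesCortiIritani_hodge}, applied on the blowup mirror $\tilde{\cY}_{q_k}$ with its Landau--Ginzburg potential $\tilde{W}_{q_k}$ and volume form $\tilde{\Omega}^{(k)}$.

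The main obstacle I anticipate lies precisely in $(iii)$ and $(iv)$: one must control the \emph{additional} perturbation parameter $\delta$ in these period expansions, verifying that the dependence of $\tilde{W}_{q_k}$ and $\tilde{\Omega}^{(k)}$ on $\delta$ contributes only an $O(\delta)$ error alongside the $O(k^{-1})$ term, \emph{uniformly} as $k \to \infty$, so that the combined error $O(k^{-1}, \delta)$ is legitimate. Establishing this uniform control, together with confirming that $\tilde{X}$ is weak Fano exactly in the stated $\Bl_p\PP^3$ case, is where the genuine content lies beyond the formal assembly of the preceding propositions.
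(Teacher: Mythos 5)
Your proposal is correct and follows essentially the same route as the paper: claims $(i)$ and $(ii)$ are obtained exactly as you describe, from Propositions \ref{BlpDivisorsProp}, \ref{Blpcodim2Prop} and Corollary \ref{BlpBstabdHYMCor} transferred through mirror symmetry, while $(iii)$ and $(iv)$ are reduced via Schmidt's identity $Z^{\tilde{\Gamma}, k\tilde{\omega}}_{1/\sqrt{3}, 0} = \frac{1}{3}\, Z^{k\omega}_{1/\sqrt{3}, 0}\circ j \circ \Phi$ to \cite{J_toricThomasYau}, Theorems 2.5 and 2.20. The uniformity obstacle you anticipate in $(iii)$--$(iv)$ dissolves in the paper's argument: the $\delta$-error enters only through the cohomological identity $\int_X e^{-\ii k\omega}\ch(L^k) = \int_{\tilde{X}} e^{-\ii k\tilde{\omega}_{\delta}}\ch(\tilde{L}^k)\big(1+O(\delta)\big)$, whose two sides are homogeneous of the same degree in $k$, so the $O(\delta)$ is automatically uniform as $k \to \infty$ and the period asymptotics of \cite{J_toricThomasYau} are applied verbatim to the blowup data rather than tracked through the $\delta$-dependence of $\tilde{W}_{q_k}$ and $\tilde{\Omega}^{(k)}$.
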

\begin{proof} Claim $(i)$ follows from the definition of $\sigma^{\vee}$ through Propositions \ref{BlpDivisorsProp} and \ref{Blpcodim2Prop}. Claim $(ii)$ follows from Corollary \ref{BlpBstabdHYMCor}. As in the proof of Corollary \ref{BlpBstabdHYMCor}, we have
\begin{align*}
& Z^{\tilde{\Gamma},k\tilde{\omega}}_{1/\sqrt{3}, 0}(\tilde{X})\big(\hat{L}^k[2]\big)  = \frac{1}{3} Z^{\omega}_{1/\sqrt{3}, 0}\big(L^k[2]\big) = \frac{1}{3}\int_X e^{-\ii k \omega} \ch(L^k),\\
& Z^{\tilde{\Gamma}, k\tilde{\omega}}_{1/\sqrt{3}, 0}(\tilde{X})\big(\cS_{\tilde{V}}\big) = \frac{1}{3}Z^{\omega}_{1/\sqrt{3}, 0}\big(\cS_{V}\big) = \frac{1}{3}\int_X e^{-\ii k \omega} \ch(\cS_{V}).    
\end{align*}
In turn 
\begin{align*}
&\int_X e^{-\ii k \omega} \ch(L^k) = \int_{\tilde{X}} e^{-\ii k\tilde{\omega}_{\delta}} \ch(\tilde{L}^k)\big(1 + O(\delta)\big),\\
&\int_X e^{-\ii k \omega} \ch(\cS_{V}) = \int_{\tilde{X}} e^{-\ii k\tilde{\omega}_{\delta}} \ch( \cS_{\tilde{V}})\big(1 + O(\delta)\big).
\end{align*}
Claims $(iii)$ and $(iv)$ now follow respectively from \cite{J_toricThomasYau}, Theorems 2.5 and 2.20.
\end{proof}
\section{Fibrations}\label{RelSec}
In this Section we prove Theorem \ref{AdiabaticThmIntro}, stated more precisely as Theorem \ref{AdiabaticThm}.
\begin{prop}\label{AdiabaticdHYMProp} Let $X^n$ be a toric manifold with a toric submersion 
\begin{equation*}
f\!: X \to \PP^1.
\end{equation*} 
Consider K\"ahler classes on $X$ of the form
\begin{equation*}
\omega_m = m f^{*}\omega_{\PP^1} + \omega_X, 
\end{equation*}
where $\omega_X$ is relatively K\"ahler and $m > 0$ is sufficiently large. Fix a line bundle $L$ on $X$ such that for the dual $L^{\vee}$ we have $\varphi_X \in (\frac{\pi}{2}, \pi) \mod 2\pi$. Then, for all $m \gg 1$, the constant Lagrangian phase equation \eqref{LagPhaseEqu} on $L^{\vee}$ is solvable on $X$ iff it is solvable on the fibres of $f$ with respect to the restrictions of $\omega_m$, $L^{\vee}$.  
\end{prop}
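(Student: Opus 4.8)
The plan is to translate both solvability statements into the Nakai--Moishezon positivity criterion of Theorem \ref{dHYMThm} and then to compare the two resulting families of inequalities in the adiabatic limit $m\to\infty$. Writing $h = f^*\omega_{\PP^1}$, the key structural input is $h^2 = 0$ (as $\omega_{\PP^1}$ is a curve class), so that for every irreducible toric subvariety $V\subset X$ of dimension $p$ only the terms constant and linear in $mh$ survive in $(\ii\omega_m + c_1(L^{\vee}))^p = (\ii\omega_X + c_1(L^{\vee}) + \ii m h)^p$. First I would record the asymptotics of the global angle: $\int_X(\omega_X + \ii c_1(L^{\vee}) + m h)^n$ is dominated, as $m\to\infty$, by $n m\, h\cup(\omega_X + \ii c_1(L^{\vee}))^{n-1}$, whose argument equals that of the fibre period $\int_{X_t}(\omega_X + \ii c_1(L^{\vee}))^{d}$. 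Consequently the total angle $\varphi_m$ converges to a limit $\varphi_X = \varphi_{X_t} + \tfrac{\pi}{2}$, where $\varphi_{X_t}$ is the fibre angle; the hypothesis $\varphi_X\in(\tfrac{\pi}{2},\pi)$ then guarantees both that $\varphi_m\in(0,\pi)$ for $m\gg1$ (so $X$ is supercritical) and that $\varphi_{X_t} = \varphi_X - \tfrac{\pi}{2}\in(0,\tfrac{\pi}{2})$ (so the fibres are supercritical), which is exactly what lets Theorem \ref{dHYMThm} be applied on both sides.

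Next I would split the finitely many irreducible toric subvarieties $V\subset X$ into those dominating $\PP^1$ (horizontal) and those contained in a fibre (vertical). For a horizontal $V$ of dimension $p$, with fibre $V_t = V\cap X_t$, the leading-order computation gives $\int_V(\ii\omega_m + c_1(L^{\vee}))^p = \ii\,p\,m\,c_V\int_{V_t}(\ii\omega_X + c_1(L^{\vee}))^{p-1} + O(1)$ with $c_V>0$, so the dHYM integral $I_m(V)$ of \eqref{dHYMPosIntro} satisfies $I_m(V) = (\text{positive})\cdot m\cdot J(V_t) + O(1)$, where $J(V_t)$ denotes the fibrewise dHYM integral of $V_t$ with angle $\varphi_{X_t}$. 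Hence for $m\gg1$ the inequality $I_m(V)>0$ is equivalent to fibre positivity $J(V_t)>0$. Since $f$ is a toric submersion, every irreducible toric subvariety $W$ of the fibre arises as $V_t$ for some horizontal toric $V$ (the orbit closure of the corresponding cone, swept over $\PP^1$), so the horizontal inequalities on $X$ reproduce exactly the full list of fibre inequalities. This already yields ``$X$ solvable $\Rightarrow$ fibres solvable'' and the easy half of the converse.

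The delicate point, which I expect to be the main obstacle, is the vertical subvarieties $W\subset X_t$: here $\omega_m|_W = \omega_X|_W$ is independent of $m$, so $Z_W := \int_W(\ii\omega_X + c_1(L^{\vee}))^p$ is fixed, while the angle still drifts, $\cot\varphi_m\to -\tan\varphi_{X_t}$. Thus $I_m(W)$ tends to $(\cos\varphi_{X_t})^{-1}\Rea\!\big(e^{-\ii\varphi_{X_t}}Z_W\big)$, a genuinely different functional from the fibre condition $J(W)>0\Leftrightarrow\Imm\!\big(e^{-\ii\varphi_{X_t}}Z_W\big)<0$. To reconcile them I would use that fibre solvability, via the constant Lagrangian phase equation \eqref{LagPhaseEqu} with $\varphi_{X_t}\in(0,\tfrac{\pi}{2})$ and $d\ge 2$ summands, forces every $\operatorname{arccot}(\lambda_i)$ to be small and positive, hence all eigenvalues $\lambda_i>0$, i.e. $c_1(L^{\vee})|_{X_t}$ is a Kähler class. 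With both $c_1(L^{\vee})|_W$ and $\omega_X|_W$ Kähler, a pointwise diagonalisation shows $\arg Z_W\in(0,\tfrac{p\pi}{2})$, so in particular $\arg Z_W>0>\varphi_{X_t}-\tfrac{\pi}{2}$; combined with the fibre inequality $\arg Z_W<\varphi_{X_t}$ this places $\arg Z_W$ inside $(\varphi_{X_t}-\tfrac{\pi}{2},\,\varphi_{X_t}+\tfrac{\pi}{2})$, which is precisely vertical positivity on $X$ in the limit. This is exactly where the upper bound $\varphi_X<\pi$ is used. Finally, as there are only finitely many toric subvarieties and all the relevant inequalities are strict --- the non-strict locus being the non-generic walls excluded on both sides --- a single threshold $m_0$ makes all the equivalences simultaneous, giving the stated ``iff''.
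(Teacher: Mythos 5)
Your proposal is correct and, in its main lines, follows the same route as the paper: both arguments reduce solvability on $X$ and on the fibres to the Nakai--Moishezon criterion of Theorem \ref{dHYMThm}, establish the adiabatic angle relation $\varphi_F = \varphi_X - \tfrac{\pi}{2} + O(m^{-1})$, equivalently $\cot\varphi_X = -\tan\varphi_F + O(m^{-1})$ (so that $\varphi_X\in(\tfrac{\pi}{2},\pi)$ makes both sides supercritical with $\varphi_F\in(0,\tfrac{\pi}{2})$), and treat subvarieties dominating $\PP^1$ via the leading-order term in $m$, which, using $(f^*\omega_{\PP^1})^2=0$ and $[f^*\omega_{\PP^1}]=\kappa[F]$, is a positive multiple $\kappa\,p\,m\tan(\varphi_F)$ of the fibrewise dHYM integral of $V\cap F$; since every toric subvariety of the fibre arises as such an intersection, these inequalities reproduce exactly the fibre inequalities, as in the paper. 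Where you genuinely depart from the paper is the vertical case. The paper's proof splits the toric subvarieties into ``fibres of $f$'' (where the condition becomes $\cot\varphi_F>\cot\varphi_X$, automatic precisely when $\varphi_F\in(0,\tfrac{\pi}{2})$) and ``not a fibre'', applying the leading-order computation to the latter; but for a toric $W$ strictly contained in a fibre over a fixed point of $\PP^1$ one has $f^*\omega_{\PP^1}|_W=0$, the displayed leading term vanishes identically, and the limiting condition $\Rea\bigl(e^{-\ii\varphi_F}Z_W\bigr)>0$ is genuinely different from the fibre condition $\Imm\bigl(e^{-\ii\varphi_F}Z_W\bigr)<0$ --- exactly the point you isolate. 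Your resolution (fibre solvability with $\varphi_F<\tfrac{\pi}{2}$ forces each $\operatorname{arccot}(\lambda_i)<\tfrac{\pi}{2}$, hence $c_1(L^{\vee})|_F$ K\"ahler, and then an argument-interval computation placing $\arg Z_W$ in $(\varphi_F-\tfrac{\pi}{2},\varphi_F+\tfrac{\pi}{2})$) is a genuine addition that the paper's written proof does not supply, and it is what makes the ``fibres $\Rightarrow$ $X$'' direction complete.

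Two caveats on your vertical argument. First, the coarse bound $\arg Z_W\in(0,\tfrac{p\pi}{2})$ closes the intersection argument only for $p=\dim W\leq 2$ (sufficient for the paper's applications with fibre dimension $d=2,3$, but not for the proposition in arbitrary dimension: for $p\geq 3$ the set $\{\Imm(e^{-\ii\varphi_F}Z_W)<0\}\cap(0,\tfrac{p\pi}{2})$ also contains the interval $(\varphi_F+\pi,\tfrac{3\pi}{2})$, which lies outside $(\varphi_F-\tfrac{\pi}{2},\varphi_F+\tfrac{\pi}{2})$). The fix is already contained in your method: pointwise diagonalisation together with Cauchy interlacing for the restriction to $W$ of the fibre solution gives $\sum_{j\leq p}\operatorname{arccot}(\mu_j)<\varphi_F$ pointwise, hence directly $\arg Z_W\in(0,\varphi_F)$, which implies both the fibre inequality and vertical positivity for every $p$, with no case distinction. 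Second, your closing remark that the non-strict walls are ``excluded on both sides'' glosses over the direction ``solvable on $X$ for all $m\gg1$ $\Rightarrow$ solvable on fibres'': if some fibre integral $J(V\cap F)$ vanishes, fibre solvability fails while the sign of $I_m(V)$ is decided by the $O(1)$ term and could remain positive. The paper's proof is equally silent on this wall case, which in the paper is excluded by the genericity hypothesis imposed where the proposition is applied (Theorem \ref{AdiabaticThm}), so this does not put you behind the paper's own standard of rigour.
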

\begin{proof}
For a fixed line bundle $L$ on $X$, the topological angle $e^{\ii \hat{\theta}_X}$ is determined by
\begin{equation*}
\int_X (\omega_m + \ii c_1(L^{\vee}))^n \in e^{\ii \hat{\theta}_X}\R_{>0},
\end{equation*}
so we have 
\begin{align*}
&\hat{\theta}_X = \arg\left(\int_{X} f^*\omega_{\PP^1} \cdot (\omega_X + \ii c_1(L^{\vee}))^{n-1} +  m^{-1} \frac{1}{n}\int_X (\omega_X + \ii c_1(L^{\vee}))^{n} \right)\\& \mod 2\pi,
\end{align*}
\begin{align*}
&= \arg\left(\int_{F} (\omega_X + \ii c_1(L^{\vee}))^{n-1}\right) + \arg\left(1 +  m^{-1} \frac{1}{n \kappa} \frac{\int_X (\omega_X + \ii c_1(L^{\vee}))^{n}}{\int_{F} (\omega_X + \ii c_1(L^{\vee}))^{n-1}}\right)\\&\mod 2\pi
\end{align*}
for $m \gg 1$, since $[f^*\omega_{\PP^1}]$ is proportional to the class of a toric fibre $F$ up to some constant $\kappa > 0$.

Similarly, the topological angle $e^{\ii \hat{\theta}_F}$ associated with the restriction of our data on fibres $F$ is determined by 
\begin{equation*}
\int_F (\omega_m + \ii c_1(L^{\vee}))^{n-1} \in e^{\ii \hat{\theta}_F}\R_{>0},
\end{equation*}
so we have
\begin{equation*}
\hat{\theta}_F = \arg \int_F (\omega_X + \ii c_1(L^{\vee}))^{n-1} \mod 2\pi,
\end{equation*}
since $[(f^*\omega_{\PP^1})|_F] = 0$. Thus, in our situation, we have
\begin{align*}
\hat{\theta}_X = \hat{\theta}_F + \arg\left(1 +  m^{-1} \frac{1}{n \kappa} \frac{\int_X (\omega_X + \ii c_1(L^{\vee}))^{n}}{\int_{F} (\omega_X + \ii c_1(L^{\vee}))^{n-1}}\right) =  \hat{\theta}_F + O(m^{-1}).
\end{align*}
Let us consider the toric Nakai-Moishezon criterion on $X$, that is, positivity for integrals of the form 
\begin{equation*} 
\int_V \Rea(\ii \omega_m + c_1(L^{\vee}))^{\dim V} - \cot(\varphi_X)\Imm(\ii \omega_m +  c_1(L^{\vee}))^{\dim V} 
\end{equation*}
where $\varphi_{X} = \frac{n}{2}\pi - \hat{\theta}_{X} \in (0, \pi)$. 

When $V$ is a fibre of $f$, we obtain the condition
\begin{equation*} 
\int_F \Rea(\ii \omega_X + c_1(L^{\vee}))^{n-1} - \cot(\varphi_X)\Imm(\ii \omega_X +  c_1(L^{\vee}))^{n-1} > 0, 
\end{equation*}
or, equivalently, 
\begin{equation*} 
\cot(\varphi_F)  > \cot(\varphi_X). 
\end{equation*}
Using the relation
\begin{equation*}
\varphi_F = \frac{n-1}{2}\pi - \hat{\theta}_F = \varphi_X - \frac{\pi}{2} + O(m^{-1}),
\end{equation*}
implying
\begin{equation*}
\cot( \varphi_X ) = -\tan(\varphi_F) + O(m^{-1}),
\end{equation*}
we see that, for $m \gg 1$, positivity with respect to a fibre is equivalent to the condition
\begin{equation*} 
\cot(\varphi_F)  > -\tan(\varphi_F). 
\end{equation*}
In the interval $\varphi_F \in (0, \pi)$, this holds precisely when $\varphi_F \in \left(0, \frac{\pi}{2}\right)$.

On the other hand, when $V$ is not a fibre of $f$, we use the expansion in $m$ of the integral with leading order term 
\begin{align*} 
&m \int_V \left(\Rea\left((\ii f^*\omega_{\PP^1})\wedge(\ii\omega_X + c_1(L^{\vee}))^{\dim V-1}\right)\right.\\
&\left. - \cot(\varphi_X)\Imm\left((\ii f^*\omega_{\PP^1})\wedge(\ii\omega_X + c_1(L^{\vee}))^{\dim V-1}\right)\right)\\
&=  \kappa m  \int_{V \cap F} \left(\Rea\left(\ii (\ii\omega_X + c_1(L^{\vee}))^{\dim V-1}\right)\right.\\
& \left.- \cot(\varphi_X)\Imm\left(\ii (\ii\omega_X + c_1(L^{\vee}))^{\dim V-1}\right)\right). 
\end{align*}
In turn 
\begin{align*} 
&\int_{V \cap F} \Rea\left(\ii (\ii\omega_X + c_1(L^{\vee}))^{\dim V-1}\right) - \cot(\varphi_X)\Imm\left(\ii (\ii\omega_X + c_1(L^{\vee}))^{\dim V-1}\right)\\
&= \int_{V \cap F} - \Imm (\ii\omega_X + c_1(L^{\vee}))^{\dim V-1} - \cot(\varphi_X)\Rea (\ii\omega_X + c_1(L^{\vee}))^{\dim V-1}\\
&= \int_{V \cap F} - \Imm (\ii\omega_X + c_1(L^{\vee}))^{\dim V-1} + \tan(\varphi_F)\Rea (\ii\omega_X + c_1(L^{\vee}))^{\dim V-1}\\
& + O(m^{-1})\\   
&= \tan(\varphi_F) \int_{V \cap F} \left(\Rea (\ii\omega_X + c_1(L^{\vee}))^{\dim V-1}\right.\\
& \left. - \cot(\varphi_F)\Imm (\ii\omega_X + c_1(L^{\vee}))^{\dim V-1}\right) + O(m^{-1}).  
\end{align*}
When $\varphi_X \in (\frac{\pi}{2}, \pi)$, or equivalently, for sufficiently large $m$, when $\varphi_F \in \left(0, \frac{\pi}{2}\right)$, we have $\tan(\varphi_F) > 0$, and the above computation shows that the Nakai-Moishezon criterion on a toric fibre $F$ implies that on $X$ with respect to subvarieties not identical to a fibre. But we know already that for $\varphi_F \in \left(0, \frac{\pi}{2}\right)$ the criterion automatically holds with respect to fibres for $m \gg 1$.

Thus, with our assumptions, the Nakai-Moishezon criterion for the constant Lagrangian phase equation holds on $X$ iff it holds for the restriction to a toric fibre, which implies our claim.
\end{proof}

We specialise to the case when $f\!: X \to \PP^1$ is a projective toric submersion with relative dimension $d = 2, 3$, endowed with the K\"ahler class 
\begin{equation*}
\omega_m = m f^{*}\omega_{\PP^1} + \omega_X, 
\end{equation*}
where $\omega_X$ is relatively K\"ahler and $m > 0$ is sufficiently large. All fibres $X_t$ are isomorphic to a fixed toric manifold of dimension $d$. When $d = 3$ we assume that the polarised toric manifolds $(X_t, \omega_m|_{X_t})$ satisfy the assumptions of Theorems \ref{MainThmIntro} or \ref{BlpMainThmIntro}.

Then, as an application of the key existence result \cite{Bayer_families}, Theorem 1.3, there is a (relative) stability condition $\relsigma$ on $D^b(X)$ over $\PP^1$, i.e. a family of stability conditions $\sigma_t = (\cA_t, Z_t)$ on $D^b(X_t)$ over $\PP^1$ satisfying the properties of \cite{Bayer_families}, Definition 1.1. 

The crucial consequences of (and motivations for) these properties are
\begin{enumerate}
\item[$(a)$] by \cite{Bayer_families}, Theorem 1.2, the map sending $\relsigma$ to the family of central charges $Z_t$ is a local isomorphism;
\item[$(b)$] according to \cite{Bayer_families}, Theorem 1.4, there exists an algebraic stack $\cM_{\relsigma}(v)$ of finite type over $\PP^1$ (the stack of \emph{$\relsigma$-semistable objects}), with a good coarse moduli space, representing the functor of families of $\sigma_t$-semistable objects $E_t \in D^b(X_t)$ with fixed topological data $v$ in the sense of \cite{Bayer_families}, Definition 21.11 (2).
\end{enumerate}
  
\begin{thm}\label{AdiabaticThm} Let $f\!: X \to \PP^1$ be a projective toric submersion with relative dimension $d = 2, 3$, endowed with the K\"ahler class 
\begin{equation*}
\omega_m = m f^{*}\omega_{\PP^1} + \omega_X, 
\end{equation*}
where $\omega_X$ is relatively K\"ahler and $m > 0$ is sufficiently large. When $d = 3$ we assume that the fibres $(X_t, \omega_m|_{X_t})$ satisfy the assumptions of Theorems \ref{MainThmIntro} or \ref{BlpMainThmIntro}. Fix a line bundle $L$ on $X$ such that for the dual $L^{\vee}$ we have 
\begin{equation*}
\varphi = \frac{d+1}{2}\pi - \hat{\theta} \in \left(\frac{\pi}{2}, \pi\right) \mod 2\pi. 
\end{equation*}
Define the graded Lagrangian section
\begin{equation*}
\tilde{\cL} = \cL(L^k, h)[d-1] 
\end{equation*}
as a shift of the SYZ transform $\cL(L^k, h) \subset \cU_{q_k} \subset \cY_{q_k}$ for $k > 0$ sufficiently large (depending only on $(X, \omega)$ and $L$). Then 
\begin{enumerate} 
\item[$(i)$] If $L^k[d-1]$ is \emph{$\relsigma$-semistable} with respect to the stability condition $\relsigma$ on $D^b(X)$ over $\PP^1$ and $[\omega_X|_{X_t}] \in H^{1,1}(X, \R)$ is \emph{generic} in the sense of Corollary \ref{BStabdHYMCor}, $(ii)$ for some fibre $X_t$, then $\tilde{\cL}$ is isomorphic in $\FS(\cY_{q_k})$ to a \emph{special} Lagrangian SYZ section.
\item[$(ii)$] If $X_t$ is weak Fano, the central charges of $\tilde{\cL}_t := \cL(L^k_t, h_t)[d-1]$ and $\cL_{V_t}$ are given by periods of the holomorphic volume form, up to a small correction term 
\begin{align*}
&Z^{\vee}(\tilde{\cL}_t) := Z(L^k_t[d-1]) = \frac{1}{(2\pi \ii)^{d}} \int_{[\tilde{\cL}_t]} e^{-W_{X_t}(k \omega_X)/z} \Omega_0\,(1 + O(k^{-1})),\\
&Z^{\vee}(\cL_{V_t}) := Z(\cS_{V_t}) = \frac{1}{(2\pi \ii)^{d}} \int_{[\cL_{V_t}]} e^{-W_{X_t}(k \omega_X)/z} \Omega_0\,(1 + O(k^{-1})).
\end{align*}
\item[$(iii)$] When $X_t$ is not necessarily weak Fano, there exist a Landau-Ginzburg potential $W_{X_t}(k\omega_X)$, \emph{complex} cycles 
\begin{equation*}
\Gamma_{\tilde{\cL}_t},\,\Gamma_{\cL_{V_t}} \in H_n(\cY_{X_t}(k\omega_X), \{\Rea(W_{X_t}(k\omega_X)) \gg 0\}; \Z) \otimes \C
\end{equation*}
and a holomorphic volume form $\Omega^{(k)}$, such that  
\begin{align*} 
& Z^{\vee}(\tilde{\cL}_t) = \frac{1}{(2\pi \ii)^{d}}\int_{\Gamma_{\tilde{\cL}_t}} e^{-W_{X_t}(k\omega_X)/z}\Omega^{(k)} (1 + O(k^{-1})),\\  
& Z^{\vee}(\cL_{V_t}) = \frac{1}{(2\pi \ii)^{d}}\int_{\Gamma_{\cL_{V_t}}} e^{-W_{X_t}(k\omega_X)/z}\Omega^{(k)} (1 + O(k^{-1})),
\end{align*}
in the sense of asymptotic expansions for $z \to 0^+$. 
\end{enumerate}
\end{thm}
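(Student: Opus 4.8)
The plan is to reduce the assertion on the total space $X$ to the fibrewise analysis of the previous sections, by combining the adiabatic reduction of Proposition \ref{AdiabaticdHYMProp} with the relative stability framework of \cite{Bayer_families}. The hypothesis $\varphi = \frac{d+1}{2}\pi - \hat\theta \in (\frac{\pi}{2}, \pi)$ serves two purposes at once: it is exactly the input required by Proposition \ref{AdiabaticdHYMProp}, and, through the relation $\varphi_F = \varphi_X - \frac{\pi}{2} + O(m^{-1})$ derived in that proof, it forces the fibre angle into the supercritical subinterval $\varphi_F \in (0, \frac{\pi}{2})$ for $m \gg 1$, equivalently $\cos\hat\theta_F < 0$. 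This is precisely the phase hypothesis under which the fibrewise argument of Corollary \ref{BStabdHYMCor} (for $d = 3$), or its surface analogue from \cite{J_toricThomasYau} (for $d = 2$), operates. The fibres satisfy the Bernardara--Macr\`i--Schmidt--Zhao conditions $(a)$, $(b)$, while the phase hypothesis $\cos\hat\theta_F < 0$ needed fibrewise is produced by the adiabatic relation above (and falls in a supercritical subinterval different from that of Theorem \ref{MainThmIntro}).

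For $(i)$ the first step is to unwind the relative stability condition. Applying \cite{Bayer_families}, Theorem 1.3 to the fibrewise Bernardara--Macr\`i--Schmidt--Zhao stability conditions $\sigma^{k\omega_X|_{X_t}}_{1/\sqrt{3},\,0}$ (resp. their surface analogue), one obtains the relative stability condition $\relsigma = \{\sigma_t\}$ on $D^b(X)$ over $\PP^1$ whose restriction to the fibre over $t$ is $\sigma_t$ on $D^b(X_t)$. The key point is then to pass from $\relsigma$-semistability of $L^k[d-1]$ on $X$ to $\sigma_t$-semistability of the restriction $L^k_t[d-1]$ on $D^b(X_t)$ for a single well-chosen fibre. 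Since all fibres are isomorphic to a fixed toric manifold and carry the same restricted polarisation and line bundle, one may either invoke openness of semistability in the moduli stack $\cM_{\relsigma}(v)$ of \cite{Bayer_families}, Theorem 1.4 to deduce $\sigma_t$-semistability of $L^k_t[d-1]$ for generic $t$, or spread the fibrewise potential destabilisers $\cS_{V_t}$ of Propositions \ref{DivisorsProp}, \ref{codim2Prop} to relative objects over $\PP^1$ and specialise the relative Harder--Narasimhan phase inequalities to the fibre. Because the genericity hypothesis is imposed on the fixed class $[\omega_X|_{X_t}]$, which is common to all fibres, it suffices to have $\sigma_t$-semistability on any one fibre in the generic locus.

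Granting fibrewise semistability, Corollary \ref{BStabdHYMCor} applied on $X_t$ (using $(a)$, $(b)$, $\cos\hat\theta_F < 0$ and genericity of $[\omega_X|_{X_t}]$) shows that the constant Lagrangian phase equation \eqref{LagPhaseEqu} is solvable on the fibre with respect to $k\omega_m|_{X_t} = k\omega_X|_{X_t}$ and $(L^k)^\vee|_{X_t}$; for $d = 2$ one uses instead the surface result of \cite{J_toricThomasYau}. The two large parameters are chosen in the order $k$ first, large enough for this fibrewise step, and then $m$: since $k\omega_m = (km) f^*\omega_{\PP^1} + k\omega_X$ has adiabatic parameter $km \gg 1$, Proposition \ref{AdiabaticdHYMProp} applies to the data $(X, k\omega_m, (L^k)^\vee)$ and upgrades fibrewise solvability to solvability of \eqref{LagPhaseEqu} on the total space $X$. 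By the Leung--Yau--Zaslow correspondence (Theorem \ref{LYZThm}) this yields a Hermitian metric $\tilde h$ such that $\cL(L^k, \tilde h) \subset \cU_{q_k}$ is special Lagrangian, and as in the proof of Theorem \ref{MainThm} $(ii)$ the shift $\tilde{\cL} = \cL(L^k, h)[d-1]$ is represented in $\FS(\cY_{q_k})$ by this special Lagrangian section, establishing $(i)$.

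Parts $(ii)$ and $(iii)$ are fibrewise statements about the Douglas-type central charges $Z^\vee(\tilde{\cL}_t) = Z(L^k_t[d-1])$ and $Z^\vee(\cL_{V_t}) = Z(\cS_{V_t})$ on the fibre $X_t$, so they follow by applying Theorem \ref{MainThm} $(iii)$, $(iv)$ (equivalently \cite{J_toricThomasYau}, Theorems 2.5 and 2.20), respectively the corresponding surface statement, directly to $X_t$: when $X_t$ is weak Fano, Iritani's Gamma theorem gives the period expansion with error $O(k^{-1})$ as in Remark \ref{GammaRmk}, and the general toric case produces the complex cycles $\Gamma_{\tilde{\cL}_t}$, $\Gamma_{\cL_{V_t}}$ and holomorphic volume form $\Omega^{(k)}$ in the sense of asymptotic expansions as $z \to 0^+$. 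I expect the main obstacle to lie in the relative-to-fibrewise transfer of step $(i)$: making precise, within the framework of \cite{Bayer_families}, that $\relsigma$-semistability of $L^k[d-1]$ on $X$ controls $\sigma_t$-semistability of its restriction on a fibre, and checking that a single fibre suffices because Proposition \ref{AdiabaticdHYMProp} only requires fibrewise solvability. A secondary technical point is the bookkeeping of the two independent large parameters $k$ and $m$, which must be chosen in the correct order.
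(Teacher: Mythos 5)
Your proposal is correct and follows essentially the same route as the paper's proof: fibrewise $\sigma_t$-semistability is fed into Corollary \ref{BStabdHYMCor} (resp.\ the surface analogue from \cite{J_toricThomasYau} for $d=2$) to obtain the dHYM positivity inequalities on a single generic fibre (the condition being independent of $t$, since all fibres are isomorphic with the same restricted data), Proposition \ref{AdiabaticdHYMProp} then upgrades fibrewise solvability of \eqref{LagPhaseEqu} to solvability on $X$ via the adiabatic phase relation $\varphi_{X_t} = \varphi_X - \frac{\pi}{2} + O(m^{-1})$, and $(ii)$, $(iii)$ follow fibrewise from \cite{J_toricThomasYau}, Theorems 2.5 and 2.20. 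The one remark worth making is that the ``relative-to-fibrewise transfer'' you single out as the main obstacle is in fact immediate in the framework of \cite{Bayer_families}: by Definition 1.1 and Definition 21.11 there, $\relsigma$-semistability of $L^k[d-1]$ means by construction that each restriction $(L^k[d-1])_t$ is $\sigma_t$-semistable, so neither openness of semistability in $\cM_{\relsigma}(v)$ nor spreading out the destabilisers $\cS_{V_t}$ over $\PP^1$ is needed, and this is exactly how the paper disposes of the point.
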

\begin{proof}
Let $d = 3$. Suppose $L^k[d-1]$ is $\relsigma$-semistable. By construction, the restriction $(L^k[d-1])_t$ to each fibre of $f$ is $\sigma_t$-semistable with respect to $\sigma_t$, where $\sigma_t$ is the stability condition on $D^b(X_t)$ appearing in Theorem \ref{BMTThm} or Proposition \ref{SchmidtProp}. 

As in the proof of Proposition \ref{AdiabaticdHYMProp}, we can fix choices of $\varphi_X$, $\varphi_{X_t}$ on the total space and fibres respectively, with respect to $\omega_m$, $L^k$ and their restrictions, such that 
\begin{equation*}
\varphi_X \in \left(\frac{\pi}{2}, \pi\right),\, \varphi_{X_t}  = \varphi_X - \frac{\pi}{2} + O(m^{-1}).
\end{equation*}  
In particular, we have $\varphi_{X_t}  \in \left(0,\frac{\pi}{2} \right)$ for $m \gg 1$.

Corollary \ref{BStabdHYMCor} shows that the $\sigma_t$-semistability of $L^k[2]$ on the fibres implies the inequalities 
\begin{equation*}
\int_{V_t} \Rea ((\ii\omega_m + c_1(L^{\vee}))|_{X_t})^{\dim V_t } - \cot(\varphi_{X_t})\Imm ((\ii\omega_m + c_1(L^{\vee}))_{X_t})^{\dim V_t } \geq 0
\end{equation*}
for all proper irreducible subvarieties $V_t \subset X_t$. This condition is independent of $t$. Moreover, if $[\omega_X]$ is generic in the sense that its restriction to any fixed fibre $X_t$ does not lie in the union of analytic subvarieties of $H^{1,1}(X_t, \R)$ cut out by
\begin{equation*}
\int_{V_t} \Rea ((\ii\omega_m + c_1(L^{\vee}))|_{X_t})^{\dim V_t } - \cot(\varphi_{X_t})\Imm ((\ii\omega_m + c_1(L^{\vee}))_{X_t})^{\dim V_t } = 0,
\end{equation*}
as $V_t \subset X_t$ ranges over irreducible toric subvarieties, then the $\sigma_t$-semistability of $L^k[2]$ must imply the strict inequalities
\begin{equation*}
\int_{V_t} \Rea ((\ii\omega_m + c_1(L^{\vee}))|_{X_t})^{\dim V_t } - \cot(\varphi_{X_t})\Imm ((\ii\omega_m + c_1(L^{\vee}))_{X_t})^{\dim V_t } > 0.
\end{equation*}
Thus, the Nakai-Moishezon criterion holds for the restriction to any fibre, and by Proposition \ref{AdiabaticdHYMProp}, under our assumptions, this implies that the supercritical Lagrangian phase equation is solvable on $X$.

The same proof works for $d = 2$, using the standard construction of stability conditions on surfaces (see e.g. \cite{BayerMacriToda_Bogomolov}, Proposition 3.1.3), and the analogue of Corollary \ref{BStabdHYMCor} in this case, which is proved in \cite{J_toricThomasYau} (Theorem 1.9). This proves our claim $(i)$.

The central charges are given by
\begin{align*} 
& Z^{\vee}(\tilde{\cL}_t) := Z(L^k_t([d-1])) = (-1)^{d-1} \int_{X_t} e^{-\ii k \omega_X} \ch(L^k_t),\\  
& Z^{\vee}(\cL_{V_t}) := Z(\cS_{V_t}) = \int_{X_t} e^{-\ii k \omega_X} \ch(\cS_{V_t}),
\end{align*}
so claims $(iii)$ and $(iv)$ follow respectively from \cite{J_toricThomasYau}, Theorems 2.5 and 2.20.
\end{proof}
\section{The unstable case on $\Bl_p \PP^3$}\label{UnstSec}
Mete \cite{Mete_P3} studies triples $(X = \Bl_p \PP^3, [\omega], [\alpha])$, satisfying the supercritical condition $\varphi \in (0, \pi) \mod 2\pi$, which are \emph{dHYM-unstable}, i.e. for which dHYM-semistability \eqref{dHYMSemiPos} does not hold, (this is a higher dimensional analogue of \cite{DatarSong_slopes}, Theorem 1.13, which also contains similar results for general projective surfaces).  

Let us write $H$, $E$ for the (pullback) hyperplane and exceptional divisor classes. Let
\begin{align*}
[\omega] = \beta H - E,\,\beta > 1,\, [\alpha] = p H - r E.  
\end{align*}   
As observed in \cite{Mete_P3}, Theorem 3.1 (3), if $0 < r < \cot(\varphi)+(\cot^2(\varphi) + 1)^{\frac{1}{2}}$ then $(X = \Bl_p \PP^3, [\omega], [\alpha])$ is dHYM-unstable, so that the constant Lagrangian phase equation \eqref{LagPhaseEqu} is not solvable.
\begin{thm}[\cite{Mete_P3} Theorem 3.1 (3)]\label{MeteThm} Under some technical assumptions (spelled out in \cite{Mete_P3}, Theorem 3.1), there exist closed $(1,1)$-currents $\alpha_{\infty}$, $\alpha'_{\infty}$, with   
\begin{equation*}
\alpha_{\infty} = \alpha'_{\infty} + (\xi - r) [E],\,[\alpha'_{\infty}] = p H - \xi E
\end{equation*}
where $\xi \in \R$, $\xi > r$, $[E]$ denotes the current of integration along $E$, such that:
\begin{enumerate}
\item[$(i)$] $\alpha'_{\infty}$ has continuous local potential and is a strong solution of the dHYM equation 
\begin{equation*}
\Rea(\alpha'_{\infty} + \ii \omega)^3 = \cot(\varphi_{\min}) \Imm(\alpha'_{\infty} + \ii \omega)^3
\end{equation*}  
on $X \setminus E$, for some new angle $\varphi_{\min} \in (0, \pi)$;
\item[$(ii)$] $\alpha_{\infty}$ is a global solution on $X$ of the equation
\begin{equation}\label{WeakdHYM}
\Rea\langle(\alpha_{\infty} + \ii \omega)^3 \rangle = \cot(\varphi_{\min}) \Imm\langle(\alpha_{\infty} + \ii \omega)^3\rangle
\end{equation}  
where $\langle\,-\,\rangle$ denotes the non-pluripolar product (in the sense of Bouck\-som-Eyssidieux-Guedj-Zeriahi); 
\item[$(iii)$] the new angle $\varphi_{\min}$ satisfies the minimality condition
\begin{align*}
&\cot(\varphi_{\min})\\
& = \sup\left\{\frac{\Rea([\alpha_{s}] + \ii \omega)^3}{\Imm([\alpha_{s}] + \ii \omega)^3}\,:\, [\alpha_s] = p H - s E,\,\Imm([\alpha_{s}] + \ii \omega)^3>0,\,s>0\right\}.
\end{align*}
\end{enumerate}
\end{thm}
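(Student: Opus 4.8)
The plan is to follow the variational and pluripotential-theoretic strategy of Datar--Mete--Song \cite{DatarSong_slopes}, specialised to the supercritical dHYM equation on $X = \Bl_p\PP^3$. The guiding principle is that the failure of dHYM-positivity \eqref{dHYMPosIntro} is concentrated along the exceptional divisor $E \cong \PP^2$, that the ``stable reduction'' is obtained by deforming $[\alpha]$ along the destabilising ray $[\alpha_s] = pH - sE$ to the critical value $s = \xi$ realising the minimal angle $\varphi_{\min}$, and that a weak solution in the original class is recovered by adding back a current of integration along $E$.

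First I would make the optimisation in $(iii)$ fully explicit. On $\Bl_p\PP^3$ the only nonzero intersection numbers among $H$, $E$ are $H^3$ and $E^3$, while $H^2\cdot E = H\cdot E^2 = 0$; hence $\int_X([\alpha_s] + \ii\omega)^3 = (p + \ii\beta)^3 - (s + \ii)^3$, and the phase ratio $\psi(s) = \Rea([\alpha_s]+\ii\omega)^3/\Imm([\alpha_s]+\ii\omega)^3$ is an explicit rational function of $s$ on the locus where the imaginary part is positive. I would show its supremum is attained at a unique critical $\xi > r$ with $\psi'(\xi) = 0$, and set $\cot(\varphi_{\min}) = \psi(\xi)$. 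A short computation shows that $\psi'(\xi) = 0$ is equivalent to the matching $\cot(\varphi_{\min}) = (\xi^2-1)/(2\xi)$, which is precisely the angle $\varphi_E$ of the restricted class on $E$, where $\omega|_E = h$ and $\alpha_\xi|_E = \xi\, h$ in the hyperplane class $h$ of $E \cong \PP^2$. Thus $\xi$ is the optimal degeneration in the usual sense: the global angle of the critical class $[\alpha'_\infty] = pH - \xi E$ coincides with the angle along the destabiliser $E$.

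This matching is exactly the statement that $(X, \omega, [\alpha'_\infty], \varphi_{\min})$ sits on the boundary of the dHYM-solvable region: I would verify that the Nakai--Moishezon positivity \eqref{dHYMPosIntro} holds strictly for every proper irreducible toric $V \not\subset E$ and degenerates to equality along the subvarieties contained in $E$, using the explicit toric intersection theory for the finitely many toric cycles. The analytic heart is then to produce the strong solution $\alpha'_\infty$ on $X \setminus E$. Since we are on the boundary, Theorem~\ref{dHYMThm} does not apply directly; instead I would construct $\alpha'_\infty$ as the limit of genuine solutions of \eqref{LagPhaseEqu} for a family of strictly dHYM-positive data converging to $(\omega, [\alpha'_\infty], \varphi_{\min})$ --- obtained by perturbing the class and/or the angle into the solvable region --- or equivalently as an envelope of bounded subsolutions. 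The essential point is uniform estimates: a uniform $C^0$-bound on the local potentials, so that in the limit $\alpha'_\infty$ is a closed $(1,1)$-current with continuous local potential, together with interior higher-order estimates on compact subsets of $X \setminus E$ yielding a smooth strong solution there. These interior estimates are expected to degenerate as one approaches $E$, at a rate dictated by the vanishing of the positivity along $E$.

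Finally I would globalise. Setting $\alpha_\infty = \alpha'_\infty + (\xi - r)[E]$ returns to the original class $[\alpha] = pH - rE$ and adds a positive current of integration supported on the pluripolar set $E$, so $\alpha_\infty = \alpha'_\infty$ on $X \setminus E$ and the strong equation of $(i)$ holds there. Statement $(ii)$ then follows from the defining properties of the non-pluripolar product of Boucksom--Eyssidieux--Guedj--Zeriahi: it is local in the plurifine topology and assigns no mass to the pluripolar set $E$, so $\langle(\alpha_\infty + \ii\omega)^3\rangle$ agrees with $(\alpha'_\infty + \ii\omega)^3$ away from $E$ and carries no contribution from $E$, and the pointwise relation with angle $\varphi_{\min}$ propagates to the global weak equation \eqref{WeakdHYM}. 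I expect the main obstacle to be precisely the a priori estimates of the previous paragraph: controlling the degeneration near $E$ --- showing the local potentials remain continuous up to $E$ and that no spurious mass is created on $E$ in the limit --- is where the unspecified ``technical assumptions'' must enter, and is the one step that does not reduce to the explicit toric bookkeeping.
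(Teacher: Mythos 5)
First, a point of order: this paper does not prove Theorem \ref{MeteThm} at all. It is quoted, with attribution, from Mete \cite{Mete_P3}, Theorem 3.1 (3), and is used purely as a black box to motivate the speculation in Section \ref{UnstSec} about Harder--Narasimhan filtrations on $\Bl_p\PP^3$. So there is no ``paper's own proof'' to compare yours against; what follows assesses your proposal against the actual source of the result and its antecedent \cite{DatarSong_slopes}.

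Your first paragraph is correct and is worth keeping: on $\Bl_p\PP^3$ one has $H^3 = E^3 = 1$ and $H^2\cdot E = H\cdot E^2 = 0$, so $\int_X([\alpha_s]+\ii\omega)^3 = (p+\ii\beta)^3 - (s+\ii)^3$, and differentiating the ratio $\psi(s)$ shows that criticality forces $\cot(\varphi_{\min}) = \psi(\xi) = (\xi^2-1)/(2\xi)$, which is exactly the phase ratio of the restricted data $\omega|_E = h$, $\alpha_\xi|_E = \xi h$ on $E\cong\PP^2$ --- the expected matching of the global angle with the angle along the destabiliser. The genuine gap is in your analytic heart. You propose to produce $\alpha'_{\infty}$ as a limit of honest solutions for perturbed, strictly dHYM-positive data, relying on ``uniform $C^0$-bounds'' and interior estimates. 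No such general boundary compactness theory exists for the dHYM equation: a priori estimates as the class approaches the boundary of the solvable cone are precisely what is missing in general, degenerate limits can a priori lose or concentrate mass in uncontrolled ways, and nothing in your scheme pins down the Lelong number of the limit along $E$ to the specific value $\xi - r$, which is part of the statement (as is the continuity of the local potential of $\alpha'_{\infty}$ \emph{across} $E$, not merely on compacta of $X\setminus E$). The actual proofs in this line of work --- \cite{DatarSong_slopes}, Theorem 1.13 on surfaces, of which Mete's theorem is the explicit higher-dimensional analogue on $\Bl_p\PP^3$ --- sidestep all of this by exploiting the Calabi symmetry of the classes $\beta H - E$ and $pH - rE$: the Lagrangian phase equation reduces to an ODE for a momentum-type profile, and the currents $\alpha'_{\infty}$, $\alpha_{\infty}$, the continuity of the potential, the precise mass $(\xi-r)[E]$, and the minimal angle $\varphi_{\min}$ are all read off explicitly from the boundary behaviour of that profile. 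Your final globalisation step (adding $(\xi-r)[E]$ and invoking that the non-pluripolar product of Boucksom--Eyssidieux--Guedj--Zeriahi assigns no mass to $E$) is fine \emph{once} $\alpha'_{\infty}$ exists with the stated regularity, but as written the existence step is an expectation rather than an argument, and it is the one place where symmetry, not general pluripotential theory or perturbation, carries the proof.
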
 

Let us consider the dHYM equation on a line bundle $L^{\vee} \to X$, so $[\alpha] = c_1(L^{\vee})$. Suppose that the supremum in Theorem \ref{MeteThm} $(iii)$ is achieved by $\bar{s} \in \Q$, $\bar{s} > r$. Then, for $k > 1$ sufficiently large and divisible, $(L^{\vee})^k(k (q - \bar{s})E)$ is a genuine line bundle, which moreover, by Theorem \ref{MeteThm} $(ii)$, supports a weak solution $h_E$ of the dHYM equation, i.e. a solution of \eqref{WeakdHYM}, with the \emph{correct} topological angle $\cot(\varphi_{\min})$, satisfying the minimality condition of Theorem \ref{MeteThm} $(iii)$.   

Since $\bar{s} > r$, there is a natural morphism $L^k \to L^k(k(\bar{s}-q)E)$ in $\Coh(X)$, corresponding to a mirror morphism $\cL \to \cL_{E}$ in $\FS(\cY_q)$, where
\begin{equation*}
\cL = \cL(L^k, h),\,\cL_E = \cL(L^k(k(\bar{s}-q)E), h^{\vee}_E)
\end{equation*} 
are SYZ transforms. By duality, $h^{\vee}_E$ is a weak solution of the dHYM equation on $L^k(k(\bar{s}-r)E)$, and so $\cL_E$ satisfies the special Lagrangian condition in the weak sense, with minimal angle $\cot(\varphi_{\min})$. 

Thus, it seems natural to expect that, possibly after a shift, the morphism $\cL \to \cL_E$ is the minimal slope semistable quotient in the Harder-Narasimhan filtration of $\cL$ with respect to a suitable Bridgeland stability condition, to be determined (in particular, such a quotient should admit a weak solution of the special Lagrangian equation with the correct phase angle, which is the case for $\cL_E$). Note that our results Theorems \ref{MainThm} and \ref{BlpThm} are not adequate to study this problem. Indeed, $X = \Bl_p \PP^3$ does not satisfy the assumptions of Theorem \ref{ToricBStabThm}, while the dHYM equation is always solvable on the base $\PP^3$, so Theorem \ref{BlpThm} does not allow examples for which it is not solvable on $X$.    
\addcontentsline{toc}{section}{References}
 
\bibliographystyle{abbrv}
 \bibliography{biblio_dHYM}

\noindent SISSA, via Bonomea 265, 34136 Trieste, Italy\\
Institute for Geometry and Physics (IGAP), via Beirut 2, 34151 Trieste, Italy\\
jstoppa@sissa.it    
\end{document}